\newif\ifdebug                                                      %
\DeclareFontFamily{U}{MnSymbolC}{}
\DeclareSymbolFont{MnSyC}{U}{MnSymbolC}{m}{n}
\DeclareFontShape{U}{MnSymbolC}{m}{n}{
    <-6>  MnSymbolC5
   <6-7>  MnSymbolC6
   <7-8>  MnSymbolC7
   <8-9>  MnSymbolC8
   <9-10> MnSymbolC9
  <10-12> MnSymbolC10
  <12->   MnSymbolC12}{}
\DeclareMathSymbol{\contract}{\mathbin}{MnSyC}{'270}
\newif\ifinfootnote
\let\footnoteasusual\footnote
\renewcommand{\footnote}[1]
{\infootnotetrue\footnoteasusual{#1}\infootnotefalse}
\newcommand    {\ynote}[1]   {\ifdebug      {{ \scriptsize{#1} }} \else \fi}
\newcommand{\printname}[1]
{\ifmmode{ \smash{ \raisebox{5pt}{\text{\tiny{#1}}} } }
 \else   {\ifinfootnote \smash{\raisebox{0pt}{\tiny{#1}}}
             \else { \marginpar{
                     \smash{ \makebox[0pt]{\raisebox{-6pt}{\tiny{#1}} } }
} } \fi}
 \fi}
\newcommand{\labell}[1]
{\ifdebug {\label{#1}\printname{{#1}}} \else {\label{#1}} \fi}
\theoremstyle{plain}
\numberwithin{equation}{section}
\newtheorem {Lemma}[equation]{Lemma}
\newtheorem {Corollary}[equation]{Corollary}
\newtheorem* {Corollary*}{Corollary}
\newtheorem{Proposition}[equation]{Proposition}
\newtheorem*{Proposition*}{Proposition} 
\newtheorem*{Exercise*}{Exercise} 
\theoremstyle{definition}
\newtheorem{Construction}[equation]{Construction}
\newtheorem*{Construction*}{Construction}
\theoremstyle{remark}
\newtheorem{Remark}[equation]{Remark}
\newtheorem*{Remark*}{Remark}
\newtheorem{Example}[equation]{Example}
\def\eor{\unskip\ \hglue0mm\hfill$\diamond$\smallskip\goodbreak}
\def\eoe{\unskip\ \hglue0mm\hfill$\between$\smallskip\goodbreak}
\setlist{topsep=0pt,itemsep=6pt}
\newcommand\dblmathcircled[1]{\mathpalette\@dblmathcircled{#1}} 
\newcommand\@dblmathcircled[2]{\tikz[baseline=(math.base)]
\node[draw,circle,double,inner sep=1pt] (math) {$\m@th#1#2$};}
\def \oodot {\dblmathcircled{\cdot}}
\def \C {{\mathbb C}}
\def \R {{\mathbb R}}
\def \Z {{\mathbb Z}}
\def \N {{\mathbb N}}
\def \Rplus {\R_{\geq 0}}
\def \Rpos {\R_{> 0}}
\def \cut {{\operatorname{cut}}}
\def \Mcut {M_\cut}
\def \Ncut {N_\cut}
\def \Wcut {W_\cut}
\def \Vcut {V_\cut}
\def \Ecut {E_\cut}
\def \red {{\text{red}}}
\def \Mred {M_\red}
\def \Nred {N_\red}
\def \Ered {E_\red}
\def \ol {\overline}
\def \del {\partial}
\def \ssminus {\smallsetminus}
\def \wh {\widehat}
\def \wt {\tilde}
\DeclareMathOperator \supp {supp}
\DeclareMathOperator \Id {Id}
\def \intM {\mathring{M}}
\def \intN {\mathring{N}}
\def \intY {\mathring{Y}}
\def \calF {{\mathcal F}}
\def \calO {{\mathcal O}}
\def \calS {{\mathcal S}}
\def \frakU {{\mathfrak U}}
\def \eps {\epsilon}
\def \bfc {{\mathbf{c}}}
\subjclass[2020]{Primary 53D20, Secondary 57R55}
\begin{document}

\title[Functoriality for cutting \ynote{ \today}]
{Functoriality for 
symplectic and contact cutting, and equivariant radial-squared blowups
\ynote{ \today}}
%

\author[Yael karshon \ynote{ \today}]{Yael Karshon}
\address{Dept.\ of Mathematics, University of Toronto, 40 St.~George Street,
6th floor, Toronto ON M5S 2E4, Canada}

\begin{abstract}
We exhibit Lerman's cutting procedure 
%
%
as a functor from the category of manifolds-with-boundary 
equipped with free circle actions near the boundary, 
with so-called equivariant transverse maps, 
to the category of manifolds and smooth maps.
We then apply the cutting procedure 
to differential forms that are not necessarily symplectic, 
to distributions that are not necessarily contact, and to submanifolds.
We obtain an inverse functor
from so-called equivariant radial-squared blowup.
%
%
\end{abstract}

\maketitle

\setcounter{tocdepth}{1}
\tableofcontents

\section{Introduction}
\labell{sec:intro}

\subsection*{The cutting construction}\ 

%
%

Lerman's symplectic cutting procedure~\cite{lerman:symplectic cuts},
and his related contact cutting procedure~\cite{lerman:contact cuts},
have had wide applications in symplectic and contact geometry.
The symplectic cutting procedure takes a symplectic manifold $M$
with a Hamiltonian circle action and a momentum map $\mu \colon M \to \R$,
passes to a regular sub-level set $\{ \mu \geq \lambda \}$,
which is a manifold-with-boundary, and collapses its boundary 
$\{ \mu = \lambda \}$ along the circle action.
Identifying the resulting space with the symplectic reduction
of the diagonal circle action on $M \times \C$
makes it into a symplectic manifold.
The name ``cutting'' is because 
taking the two sub-level sets $\{ \mu \geq \lambda\}$
and $\{ \mu \leq \lambda \}$ and collapsing their boundaries
can be considered as a ``cutting'' of $M$ into two pieces. 
If $M$ is a symplectic toric manifold 
and the circle action is a sub-circle of the toric action,
then---up to equivariant symplectomorphism---this construction amounts to 
decomposing the momentum polytope~$\Delta$ of $M$ into two convex polytopes
by slicing it along a hyperplane,
and taking the symplectic toric manifolds that correspond to the two
convex polytopes.

\begin{center}
\begin{figure}[h]
\includegraphics[scale=.75]{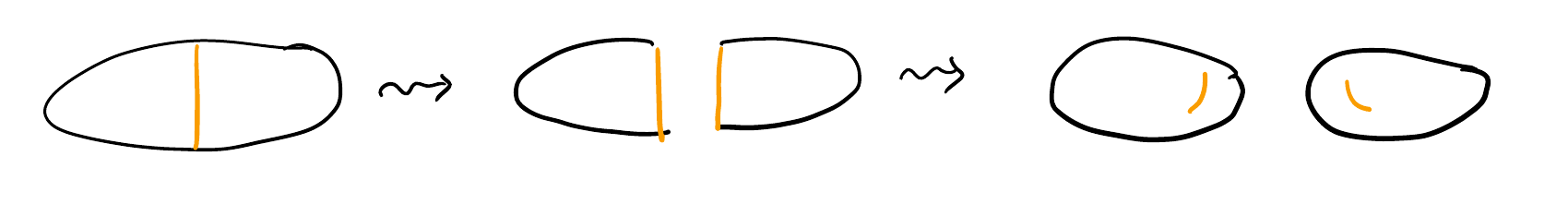}
\caption{Symplectic cutting}
\end{figure}
\end{center}

As noted by Lerman~\cite[Section 2]{lerman:contact cuts},
one can carry out this construction without a symplectic form,
and one can work with one-half of this construction,
beginning with a manifold-with-boundary
and a circle action near the boundary\footnote{``Near the boundary'' 
means ``on some neighbourhood of the boundary''.}.
In this situation, the name ``cutting'' might be less appropriate;
perhaps we can call it ``collapsing'' (because we collapse the boundary
along the circle action) or ``closing'' (because we create a manifold
without boundary), or perhaps ``sewing'' or ``stitching'';
though changing the name would hide the connection
with classical symplectic cutting.
%
Until this paper appears in print, I would like to hear your opinion 
on the terminology; drop me an email at \url{karshon@math.toronto.edu} 
and let me know what you think.

This cutting procedure takes a manifold-with-boundary $M$,
equipped with a free action of the circle group $S^1$ 
on a neighbourhood $U_M$ of the boundary,\footnote{Necessarily, 
the boundary $\del M$ is $S^1$-invariant.}
to the quotient of $M$ by the equivalence relation $\sim$
in which distinct points are equivalent if and only if 
they are both in the boundary $\del M$ and are in the same $S^1$ orbit.
We denote this quotient by 
$$\bm{\Mcut} := M/{\sim} $$ 
and the quotient map by 
$$ \bfc \colon M \to \Mcut.$$ 
We equip $\Mcut$ with the quotient topology.

Lerman \cite[Section 2]{lerman:contact cuts} 
begins with a circle action on $\del M$.
To obtain a smooth manifold structure on $\Mcut$, 
he uses the collar neighbourhood theorem to identify 
a neighbourhood of $\del M$ in $M$ with $\del M \times [0,1)$
and applies his symplectic cutting procedure
(without a two-form and with the momentum map replaced by the 
projection to the second factor) to $\del M \times (-1,1)$.
Our line of argument is slightly different.
First, in order to exhibit the cutting construction as a functor,
it is not enough to begin with a circle action on $\del M$;
we need to begin with a circle action near $\del M$.
(The manifold structure on $\Mcut$ does depend on the extension
of the circle action to a neighbourhood of $\del M$;
see Remark~\ref{dependence on action}.)
Second, we characterise the manifold structure on $\Mcut$ 
through its space of real valued smooth functions.
We do this in Construction~\ref{calF M},
followed by Lemma~\ref{FM indep of f} and Theorem~\ref{mfld str}.


(Warning: the map $c \colon M \to \Mcut$ is not smooth!)

As before, let $M$ be a manifold-with-boundary,
with a free circle action on a neighbourhood $U_M$ of the boundary.
An \textbf{invariant boundary defining function}\footnote{
The term ``boundary defining function'' is promoted
in John Lee's texbook \cite[Chapter~5]{JohnLee:smooth}.
} 
on $M$ 
is a smooth function $f \colon M \to \R_{\geq 0}$,
such that $f^{-1}(0) = \del M$ and $df|_{\del M}$ never vanishes,
and such that $f$ is $S^1$-invariant
on some $S^1$-invariant neighbourhood of $\del M$ in $U_M$.

\begin{Remark}
An invariant boundary-defining function always exists: 
take charts with values in $\R^{n-1} \times \R_{\geq 0}$ whose domains 
cover $\del M$;
project to the last coordinate, patch with a partition of unity,
restrict 
to an invariant neighbourhood\footnote{Every 
neighbourhood of $\del M$
         \label{invt nd}
contains an invariant neighbourhood;
see Lemma~\ref{quotient map}\eqref{intersection}.
}
 of $\del M$ in $U_M$, 
and average with respect to the circle action;
use a partition of unity to patch with the constant function on~$\intM$
with value $1$.
\end{Remark}

\begin{Remark}
If we start from a symplectic manifold with a Hamiltonian circle action,
and $M$ is the set $\mu \geq 0$
where $\mu$ is the momentum map and $0$ is a regular value for $\mu$,
then $f := \mu$ is an invariant boundary defining function.
\end{Remark}

\begin{center}
\begin{figure}[h]
\includegraphics[scale=.75]{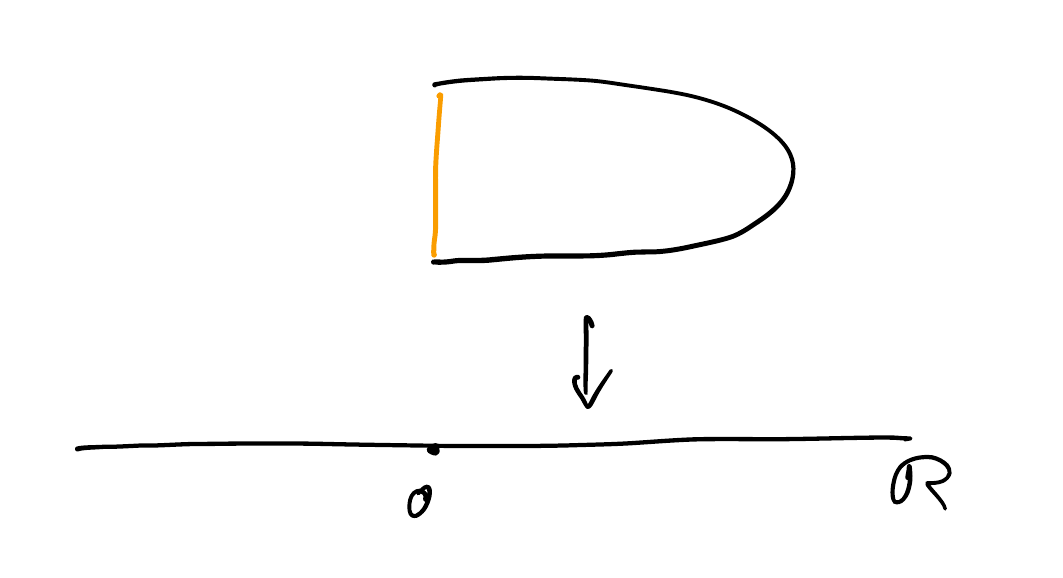}
\caption{Boundary defining function}
\end{figure}
\end{center}

We now describe the \textbf{smooth cutting construction},
which yields a smooth manifold structure on 
the topological space $\Mcut$.
We do this in Construction~\ref{calF M}, Lemma~\ref{FM indep of f},
and Lemma~\ref{mfld str}.
The technical condition in Construction~\ref{calF M}($\calF 2$)
is motivated by a characterization of smooth functions on $\R^2$
in terms of their expressions in polar coordinates;
see Lemma~\ref{cylinder}.

\begin{Construction} \labell{calF M}
%
Let $f \colon M \to \Rplus$ be an invariant boundary defining function.
We then consider the set $\bm{\calF_M}$ 
of those real valued functions $h \colon \Mcut \to \R$
whose composition $\wh{h} := h \circ c$
with the quotient map $c \colon M \to \Mcut$
satisfies the following two conditions.
\begin{enumerate}
\item[($\calF 1$)]
$\wh{h}|_{\intM} \colon \intM \to \R$ is smooth.
\item[($\calF 2$)]
There exists an $S^1$-invariant open neighbourhood $V$ of $\del M$ in~$U_M$
and a smooth function $H \colon V \times \C \to \R$
such that
\begin{enumerate}
\item
$H(a \cdot x , z) = H(x,az)$ for all $a \in S^1$ and $(x,z) \in V \times \C$;
and
\item
$\wh{h}(x) = H\big(x,\sqrt{f(x)} \, \big)$ for all $x \in V$.
\end{enumerate}
\end{enumerate}
\end{Construction}

\begin{Lemma} \labell{FM indep of f}
In the setup of Construction~\ref{calF M},
the set of functions $\calF_M$ is independent
of the choice of invariant boundary defining function $f$.
\end{Lemma}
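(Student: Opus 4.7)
The plan is to reduce the independence to a factorization lemma plus a substitution. First observe that condition $(\calF 1)$ makes no reference to $f$, so only $(\calF 2)$ is at stake: I need to show that if $h$ satisfies $(\calF 2)$ with some invariant boundary defining function $f_1$ and associated $H_1$, then it satisfies $(\calF 2)$ with any other invariant boundary defining function $f_2$, after which symmetry finishes the job.

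The key lemma, which I would state and prove first, is: given two invariant boundary defining functions $f_1, f_2$ on $M$, there exists an $S^1$-invariant open neighborhood $V$ of $\del M$ in $U_M$, contained in the common invariant neighborhoods where the $f_i$ are $S^1$-invariant, together with a smooth, positive, $S^1$-invariant function $g \colon V \to \R_{>0}$ such that $f_2 = g \cdot f_1$ on $V$. The local existence of $g$ is a standard application of Hadamard's lemma: near any point of $\del M$, since $df_1|_{\del M} \neq 0$ and $f_1 \geq 0$, one can use $f_1$ itself as the boundary-transverse coordinate in a half-space chart. Hadamard's lemma applied to $f_2$, which vanishes at $f_1 = 0$, yields $f_2 = f_1 \cdot \phi$ for a smooth $\phi$; the boundary value $\phi|_{\del M}$ is the inward derivative of $f_2$, strictly positive because $df_2|_{\del M} \neq 0$ and $f_2 \geq 0$. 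Since these local factors all agree with $f_2/f_1$ on the interior, they patch to a globally defined smooth positive $g$ on some neighborhood of $\del M$. The $S^1$-invariance of $g$ is automatic: $g = f_2/f_1$ is manifestly invariant on the dense interior portion of $V$, and the invariance extends by continuity.

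Given the lemma, the substitution step is routine. Starting from $\wh h(x) = H_1(x, \sqrt{f_1(x)})$ on an invariant neighborhood $V' \subset V$, define
\[
H_2(x, w) := H_1\!\left(x, \frac{w}{\sqrt{g(x)}}\right), \qquad (x,w) \in V' \times \C.
\]
Smoothness follows from $g > 0$; the equivariance $H_2(ax, w) = H_2(x, aw)$ uses both the equivariance of $H_1$ and the $S^1$-invariance of $g$; and
\[
H_2\bigl(x, \sqrt{f_2(x)}\bigr) = H_1\!\left(x, \sqrt{f_2(x)/g(x)}\right) = H_1\bigl(x, \sqrt{f_1(x)}\bigr) = \wh h(x),
\]
which is exactly $(\calF 2)$ for $f_2$.

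The main obstacle is the factorization lemma, specifically verifying that the local Hadamard factors glue consistently to a single smooth positive function and that this global $g$ inherits $S^1$-invariance on an invariant neighborhood of $\del M$. Both points come down to uniqueness of $f_2/f_1$ on the interior together with continuity; everything else in the proof is an exercise in unpacking definitions.
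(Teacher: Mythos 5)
Your proof is correct and follows essentially the same route as the paper: write one boundary defining function as a smooth positive multiple of the other (a standard fact, which the paper simply asserts and you justify via Hadamard's lemma), then rescale the $\C$-variable in $H$ by that factor. The only cosmetic difference is that the paper normalizes by taking $g$ to be the square root of the ratio, so that $f = g^2 f_o$ and the substitution $H_o(x,z) := H(x, g(x)z)$ has no radical, whereas you keep $g$ as the ratio itself and absorb the square root into the substitution.
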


\begin{proof}
Let $f_o \colon M \to \R_{\geq 0}$
be another\footnote{The subscript $o$ stands for ``other''.}
invariant boundary defining function.
Then $f$ is the product of $f_o$ with a smooth function on $M$
that is everywhere positive.  
Let $g \colon M \to \R_{>0}$ be the square root of this positive function.  
Then $g$ is smooth and is $S^1$-invariant near the boundary,
and $f(x) = g(x)^2 f_o(x)$ for all $x \in M$.

Fix any real valued function $h \colon \Mcut \to \R$.
Suppose that $\wh{h} := h \circ c \colon M \to \R$ 
satisfies Condition ($\calF 2$) of Construction~\ref{calF M},
with an open subset $V$ and a smooth function $H \colon V \times \C \to \R$.
After possibly shrinking $V$, we may assume that $g$ is $S^1$ invariant
on $V$.
Then $\wh{h}$ satisfies Condition ($\calF 2$) of Construction~\ref{calF M}
with $f$ replaced by $f_o$,
with the smooth function $H_o \colon V \times \C \to \R$ 
defined by $H_o(x,z) := H(x,g(x)z)$.

Because the boundary defining function does not appear
in Condition~($\calF 1$) of Construction~\ref{calF M},
and because the function $h$ was arbitrary,
we conclude that
the set of functions $\calF_M$ that is obtained from $f$
through Construction~\ref{calF M}
is contained in the set of functions that is obtained from $f_o$
through Construction~\ref{calF M}.
Flipping the roles of $f$ and $f_o$, we conclude that these two 
sets of functions coincide.
\end{proof}

\begin{restatable} {Theorem}{MFLD}
\labell{mfld str}
In the setup of Construction \ref{calF M},
there exists a unique manifold structure on $\Mcut$
such that $\calF_M$ is the set of real valued smooth functions on $\Mcut$.
\end{restatable}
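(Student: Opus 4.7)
The plan is to first dispatch uniqueness by a standard argument: if $\mathcal{S}_1$ and $\mathcal{S}_2$ are two smooth structures on $\Mcut$ both having $\calF_M$ as their ring of real valued smooth functions, then for any chart of $\mathcal{S}_2$ the coordinate functions lie in $\calF_M$ and hence are smooth with respect to $\mathcal{S}_1$; this shows the identity is smooth in both directions, so $\mathcal{S}_1 = \mathcal{S}_2$. I focus the rest of the plan on existence.

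For existence, I construct an atlas in two patches. The first patch is $c(\intM) \subset \Mcut$, which is open (since $c^{-1}(c(\intM)) = \intM$ is open in $M$) and on which $c$ restricts to a homeomorphism; I endow it with the smooth structure transported from $\intM$. The second patch is $\Vcut$, which I identify with a smooth hypersurface in the auxiliary manifold $Q := (V \times \C)/S^1$, where $S^1$ acts by $a \cdot (x, z) := (a x, a^{-1} z)$. Because $S^1$ acts freely on $V$, this action is free and $Q$ is a smooth manifold of dimension $\dim M + 1$. The $S^1$-invariant smooth function $g(x, z) := |z|^2 - f(x)$ has $0$ as a regular value (at points $(x, 0)$ with $x \in \del M$, $dg = -df \neq 0$ because $f$ is a boundary defining function; away from $z = 0$, the partial in $z$ is nonzero), so it descends to $\bar g$ on $Q$ with $0$ regular, and $Z := \bar g^{-1}(0)$ is a smooth codimension one submanifold of $Q$. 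The map $V \to Z$ sending $x \mapsto [x, \sqrt{f(x)}]$ descends to a continuous bijection $\Phi \colon \Vcut \to Z$: the descent is well defined because for $x_1, x_2 \in \del M$ in the same $S^1$-orbit one has $[x_1, 0] = [x_2, 0]$ in $Q$; injectivity uses that the twisted $S^1$-orbit of $(x, \sqrt{f(x)})$ with $f(x) > 0$ contains no other point of the form $(x', \sqrt{f(x')})$; and surjectivity uses that every $[x, z] \in Z$ has a representative with $z$ real and non-negative. Continuity of $\Phi^{-1}$ follows from the quotient topologies, so $\Phi$ is a homeomorphism, and I transport the smooth structure from $Z$ to $\Vcut$. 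On the overlap $c(\intM) \cap \Vcut$ the transition is smooth because $Z \cap \{z \neq 0\}$ admits the smooth section $[x, z] \mapsto x$ via the unique positive-real representative.

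It remains to match smooth functions with $\calF_M$. In the direction $\calF_M \Rightarrow C^\infty$: the $S^1$-invariant smooth function $H \colon V \times \C \to \R$ from ($\calF 2$) descends to a smooth function $\bar H$ on $Q$, and the restriction $h|_{\Vcut} = \bar H|_Z \circ \Phi$ is smooth in the chart; combined with ($\calF 1$) on the interior, $h$ is smooth on $\Mcut$. For the converse, given $h \in C^\infty(\Mcut)$, the restriction $h|_{\Vcut}$ corresponds to a smooth function on $Z$, which I extend to a smooth function $\bar H$ on $Q$ via the tubular neighborhood theorem (normal bundle of $Z$ in $Q$) composed with a cutoff; the pullback $H := \bar H \circ [\,\cdot\,]$ to $V \times \C$ is smooth and automatically satisfies ($\calF 2$)(a), because $(ax, z) = a \cdot (x, az)$ in $V \times \C$ implies $[ax, z] = [x, az]$ in $Q$, and it satisfies $H(x, \sqrt{f(x)}) = \wh{h}(x)$ by construction. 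The main obstacle is the boundary chart: verifying that $\Phi$ is a homeomorphism (matching the quotient topology on $\Vcut$ with the submanifold topology of $Z$) and ensuring the tubular-neighborhood extension is $S^1$-equivariant; the polar-coordinate characterization in Lemma~\ref{cylinder} is encoded implicitly in the equivalence between the ``polar'' form $H(x, \sqrt{f(x)})$ and the evaluation of an $S^1$-invariant smooth function on the level set $|z|^2 = f(x)$.
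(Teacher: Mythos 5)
Your proposal takes a genuinely different route from the paper. The paper's proof (carried out over Sections~\ref{sec:local}--\ref{sec:proof}) proceeds by local models: near each boundary orbit one identifies $M$ with $D^{n-2}\times S^1\times[0,\eps)$ (Proposition~\ref{prop:local}), shows $(D^{n-2}\times S^1\times[0,\eps))_\cut\cong D^{n-2}\times D^2$ (Lemma~\ref{model}), verifies Hausdorff and second countability directly (Lemmas~\ref{Mcut Hausdorff} and~\ref{Mcut 2nd countable}), and packages everything in the language of differential spaces and functoriality under open inclusions so that the local identifications automatically agree. You instead flesh out the global symplectic-reduction picture that the paper sketches in Remark~\ref{rk:symplectic polar}: realise $V_\cut$ as the hypersurface $Z=\bar g^{-1}(0)$ inside $Q=(V\times\C)/S^1$. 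This is a legitimate and in some ways slicker route: you only need two charts, and the $(\calF2)$ condition becomes visibly the statement that $\widehat h$ is the restriction of an $S^1$-invariant smooth function on $V\times\C$ to the graph $\{z=\sqrt{f(x)}\}$ inside $\mu^{-1}(0)$, which is exactly what the paper tries to motivate. What you lose is the built-in functoriality machinery, which the paper later reuses heavily.

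Two places in your argument need real work rather than the one-line assertions you give. First, ``continuity of $\Phi^{-1}$ follows from the quotient topologies'' is not automatic: the natural way to see that $\Phi\colon V_\cut\to Z$ is a homeomorphism is to show that $V\to Z$, $x\mapsto[x,\sqrt{f(x)}]$, is itself a topological quotient map, and for that you need an actual argument (for instance: the image $\sigma(V)\subset\mu^{-1}(0)$ is closed, the quotient $\mu^{-1}(0)\to Z$ is proper because $S^1$ is compact, hence $\sigma(V)\to Z$ is proper, hence closed, hence a quotient map; then compare fibres with $V\to V_\cut$). Second, and more seriously, you never verify that $M_\cut$ is Hausdorff and second countable. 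Each of your two charts $c(\intM)$ and $V_\cut$ is homeomorphic to a manifold, so each is Hausdorff, but a union of two open Hausdorff subsets need not be Hausdorff (line with two origins). The paper devotes Lemma~\ref{Mcut Hausdorff} to exactly this, separating a point of $c(\intM\setminus V)$ from a point of $M_\red$ by producing saturated open sets; you would need some version of that argument, or an abstract one (e.g.\ $c$ is a closed map with compact fibres onto a $T_1$ quotient of a normal space). This is a genuine gap, not a cosmetic one, because without it you have not shown that your atlas lives on a topological manifold.

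One of the obstacles you flag is not actually an obstacle: you do not need the tubular-neighbourhood extension to be $S^1$-equivariant, because you extend $h|_Z$ to a function $\bar H$ on $Q$, and $Q$ carries no residual $S^1$ action. The pullback $H$ of $\bar H$ to $V\times\C$ is automatically invariant under the twisted diagonal action $a\cdot(x,z)=(ax,a^{-1}z)$, and setting $z\mapsto az$ in that invariance recovers $(\calF2)$(a) exactly. So that part of your construction is fine as stated. Finally, your uniqueness argument is essentially the paper's: the topology is determined by $\calF_M$ as the initial topology (Lemma~\ref{Mcut:initial topology}), and the smooth structure on a manifold is determined by its ring of smooth functions; you should note that chart coordinates are only locally defined and must be cut off to land in $\calF_M$ before the argument applies.
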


We prove Theorem~\ref{mfld str} in Section~\ref{sec:proof};
see the first half of Proposition~\ref{Mcut:manifold}.

\begin{Remark} \labell{decomposition}
The decomposition $M = \intM \sqcup \del M$ of $M$
into the disjoint union of its interior and its boundary
descends to a decomposition of the cut space,
$$ \Mcut \, = \, \intM_\cut \, \sqcup \, \Mred \, ,$$
where $ \intM_\cut := c(\intM)$ and $\Mred := c(\del M) = (\del M)/S^1$.
There exist unique manifold structures
on the pieces $\intM_\cut$ and $\Mred$
such that the quotient map restricts to a diffeomorphism 
$c|_{\intM} \colon \intM \to \intM_\cut$
and to a principal $S^1$ bundle
$c|_{\del M} \colon \del M \to \Mred$.
(For $\intM_\cut$, 
this is because $c|_{\intM} \colon x \mapsto \{ x \}$
is a bijection onto $\intM_\cut$.
For $\Mred$, this is because, by Koszul's slice theorem,
the quotient map $\del M \to (\del M)/S^1$ is a principal circle bundle;
see Lemma~\ref{subordinate}\eqref{koszul}.)

The topologies for these manifold structures
are the quotient topologies induced from $\intM$ and $\del M$.
These topologies
coincide with the subset topologies induced from $\Mcut$;
this is a consequence of Lemmas~\ref{invt near bdry}
and~\ref{quotient map}(\ref{sweep-open}).
The inclusion map of $\intM_\cut$ 
is a diffeomorphism with an open dense subset of $\Mcut$
and the inclusion map of $\Mred$ 
is a diffeomorphism with an embedded submanifold of $\Mcut$;
see Lemma~\ref{Mcut:submanifolds}
and the second half of Proposition~\ref{Mcut:manifold}.

In Remark~\ref{dependence on action} we will see 
that the manifold structures on $\intM_\cut$ and on $\Mred$
(which depend only on the circle action on the boundary $\del M$
and not on its extension to a neighbourhood of the boundary)
do not determine the manifold structure on $\Mcut$.
\eor
\end{Remark}

{

As a prototype for the smooth cutting procedure,
and 
to motivate the technical condition ($\calF 2$) in Construction~\ref{calF M},
in the following lemma we characterize the smooth functions
on the plane 
in terms of the angle and the radius-squared in polar coordinates.

\begin{Lemma} \labell{cylinder}
Consider the half-cylinder $M := S^1 \times [0,\infty)$,
with the circle acting on the first component,
and with the function $f \colon M \to \Rplus$ given by $f(b,s) = s$. 
Consider the map $M \to \C$ given by $(b,s) \mapsto \sqrt{s} \, b$.
A real-valued function $h \colon \C \to \R$ is smooth
if and only if its pullback to the half-cylinder,
$\hat{h} \colon M \to \R$, satisfies the following condition.
There exists a smooth function $H \colon M \times \C \to \R$ such that
\begin{enumerate}
\item[(a)]
\ $H(a \cdot x, z) = H( x, az)$ for all $a \in S^1$ 
and $(x,z) \in M \times \C$; \ and 
\item[(b)]
\ $\hat{h}(x) = H \big( x, \sqrt{f(x)} \, \big)$ for all $x \in M$.
\end{enumerate}
\end{Lemma}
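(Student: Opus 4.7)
The plan is to prove each implication separately. The ``only if'' direction will be a direct construction of $H$ from $h$. The ``if'' direction is the substantive one: I will exploit the equivariance in condition~(a) to reduce $H$ to a function that depends on $b$ only through the complex product $bz$, after which smoothness of $h$ will follow by extending that function smoothly across the boundary $\{s = 0\}$ and composing with the radius-squared.

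For the forward direction, assuming $h$ is smooth, I would define
$$ H \colon M \times \C \to \R, \qquad H\bigl((b, s), z\bigr) := h(bz), $$
where $b \in S^1 \subset \C$ and $bz$ is the product in $\C$. Smoothness of $H$ is clear from smoothness of complex multiplication and of $h$. Condition~(a) becomes $h(abz) = h(abz)$, and condition~(b) reduces to the definition $\hat{h}(b, s) = h(b\sqrt{s})$.

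For the reverse direction, the first step is to apply condition~(a) at $x = (1, s)$, giving $H\bigl((a, s), z\bigr) = H\bigl((1, s), az\bigr)$ for every $a \in S^1$. Setting $\tilde{H}(s, w) := H\bigl((1, s), w\bigr)$, which is smooth on $[0, \infty) \times \C$ by restriction, this rearranges to
$$ H\bigl((b, s), z\bigr) \,=\, \tilde{H}(s, bz). $$
Combined with~(b) it yields $h(b\sqrt{s}) = \tilde{H}(s, b\sqrt{s})$. Substituting $s = |w|^2$ and $b = w/|w|$ for $w \in \Cx$, and using condition~(a) with $z = 0$ to force $\tilde{H}(0,0) = H((b,0),0) = h(0)$ at the origin, I will conclude
$$ h(w) \,=\, \tilde{H}\bigl(|w|^2, w\bigr) \qquad \text{for all } w \in \C. $$

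The final step, and the only real obstacle, is smoothness of $h$ at the origin of $\C$, where the polar map $(b, s) \mapsto b\sqrt{s}$ itself fails to be smooth. The crucial observation is that the formula above feeds the radius\emph{-squared} $|w|^2$ into $\tilde{H}$, not the radius. Applying Seeley's extension theorem to extend $\tilde{H}$ smoothly from the manifold-with-boundary $[0, \infty) \times \C$ to an open neighbourhood in $\R \times \C$, and composing with the smooth map $w \mapsto (|w|^2, w)$, will give smoothness of $h$ on all of $\C$. This is precisely the mechanism that motivates phrasing condition~($\calF 2$) of Construction~\ref{calF M} in terms of $\sqrt{f(x)}$: the square root is absorbed when the equivariant function $H$ is evaluated at $z = \sqrt{f(x)}$, leaving a smooth dependence on $f(x)$ itself.
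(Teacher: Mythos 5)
Your proof is correct and follows the same route as the paper's: the forward direction constructs the identical witness $H\bigl((b,s),z\bigr) := h(bz)$, and the converse reduces via equivariance to $h(w) = \tilde H\bigl(|w|^2, w\bigr)$ with $\tilde H(s,w):=H\bigl((1,s),w\bigr)$, which is exactly the paper's one-line observation $h(z) = H\bigl(1, |z|^2, z\bigr)$. The appeal to Seeley's extension theorem at the end is superfluous --- smoothness of $\tilde H$ on the manifold-with-boundary $[0,\infty) \times \C$ already means, by definition, that $\tilde H$ locally extends smoothly across $\{s=0\}$, and that local extension is all that is needed to compose with the smooth map $w \mapsto \bigl(|w|^2, w\bigr)$ --- but this does not affect correctness.
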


\begin{proof}
Identifying $M \times \C$ with $S^1 \times [0,\infty) \times \C$,
the conditions (a) and (b) become
\begin{enumerate}
\item[(a')]
\ $H(a b, s, z) = H( b, s, az)$ for all $a \in S^1$
and $(b,s,z) \in S^1 \times [0,\infty) \times \C$; \ and
\item[(b')]
$h (\sqrt{s}\, b \, ) = H \big( b, s, \sqrt{s} \, \big)$
for all $(b,s) \in S^1 \times [0,\infty)$.
\end{enumerate}
If $h$ is smooth,
then $H(b,s,z) := h(bz)$ is smooth and satisfies (a') and (b').
Conversely, if $H$ is smooth and satisfies (a') and (b'),
then writing $h(z) = H(1,|z|^2,\ol{z})$, we see that $h$ is smooth.
\end{proof}

\begin{center}
\begin{figure}[h]
\includegraphics[scale=.75]{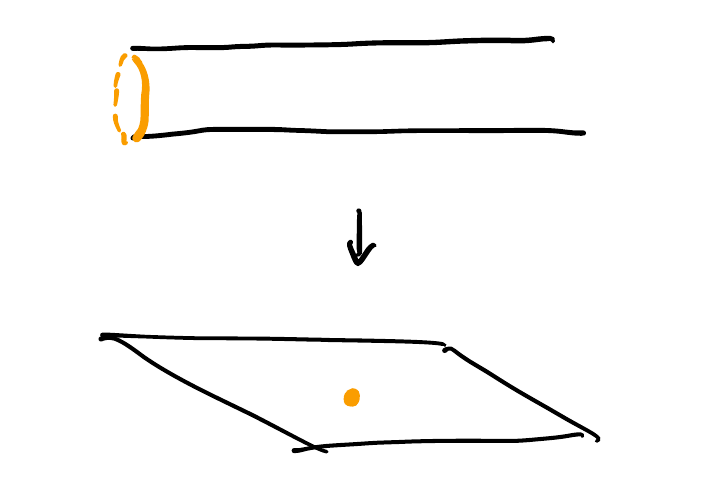}
\caption{Cutting a cylinder}
\end{figure}
\end{center}

}

\begin{Remark}[\textbf{Symplectic polar coordinates; cutting}]
\labell{rk:symplectic polar}
An equivariant symplectic geometer would recognize Lemma~\ref{cylinder}
as constructing $\C$ as the symplectic cut of the cylinder
$N := S^1 \times \Rplus$.
Equip the cylinder with the standard symplectic form 
$\omega_N := ds \wedge d\theta$,
where each point in the cylinder is written as $(e^{i\theta},s)$.
The circle group acts on the first component by left multiplication,
with the momentum map $f \colon N \to \R$ given by $f(b,s) = s$.
Also take $\C$, with the symplectic form\footnote{The factor $2$
simplifies later formulas}
$\omega_\C := 2 dx \wedge dy$,
where the complex coordinate is $z=x+iy$.
Then, take the product $N \times \C$, 
with the circle action $a \cdot (n,z) = (a \cdot n , a^{-1} z)$
and the momentum map $\mu(n,z) = f(n) - |z|^2$.
The reduced space $N//S^1 : = \mu^{-1}(0)/S^1$
is then a symplectic manifold,
which we can identify with the quotient of 
the super-level-set $\{ f \geq 0 \}$
by the equivalence relation that collapses the circle orbits in $f^{-1}(0)$.

More generally, 
in Lerman's symplectic cutting construction~\cite{lerman:symplectic cuts},
we start with a symplectic manifold $(\wt{M},\omega)$ 
with a circle action and a momentum map $f \colon \wt{M} \to \R$
such that the circle action is free on $f^{-1}(0)$
(hence $0$ is a regular level set),
and we take $M := \{ f \geq 0 \}$.
Writing an element of $\C$ as $z = x+iy$, we equip $\wt{M} \times \C$ 
with the split symplectic form $\omega \oplus (2 dx \wedge dy)$,
with the circle action $a \cdot (m,z) = (a \cdot m, a^{-1}z)$,
and with the momentum map $\mu(m,z) = f(m) - |z|^2$.
The symplectic cut of $\wt{M}$ is the reduced space
$(\wt{M} \times \C)/\!/S^1 := \mu^{-1}(0)/S^1$, which is a smooth manifold.
A real valued function on the reduced space is smooth
iff it lifts to an $S^1$-invariant smooth function on the level set 
$\mu^{-1}(0)$.
This level set is a closed submanifold of $\wt{M} \times \C$
that is contained in $M \times \C$;
its invariant smooth functions
are exactly the restrictions to the level set
of the invariant smooth functions on $M \times \C$.
The map $m \mapsto (m,\sqrt{f(m)})$
from $M$ to the zero level set $\mu^{-1}(0)$ 
descends to a bijection of $\Mcut$ with the reduced space
$\mu^{-1}(0)/S^1$;
we use this bijection to define the smooth structure on $\Mcut$.
\eor
\end{Remark}

The idea of the proof of Theorem~\ref{mfld str} is simple:
locally near the boundary we can identify $M$ with open subsets 
of $\R^{n-2} \times S^1 \times \R_{\geq 0}$;
the cutting construction
makes the $S^1$ and $\R_{\geq 0}$ components
into the angle and the radius-squared in polar coordinates on $\C$
as in Lemma~\ref{cylinder}.
This implies that a smooth manifold structure 
with the required properties exists locally on $\Mcut$.
These smooth local manifold structures are consistent
and fit into a global smooth manifold structure.

To provide accurate details,
we work with the formalism of differential spaces
as axiomatized by Sikorski \cite{sikorski1,sikorski2};
see Section~\ref{sec:differential spaces}.
The smooth cutting construction makes $\Mcut$ into a differential space;
see Section~\ref{sec:differential structure on Mcut}.
The consistency of the local manifold structures follows from 
functoriality of the smooth cutting construction 
with respect to inclusion maps of open subsets
that are invariant near the boundary.
{\it {A-priori}}, the smooth cutting construction defines a functor
that takes values in the category of differential spaces;
{\it {a-posteriori}}, it takes values in the category of smooth manifolds.

Functoriality first appears in Section~\ref{sec:functoriality},
where we apply the cutting construction to so-called
equivariant transverse maps.
Inclusions of open subsets that are invariant near the boundary
are examples of equivariant transverse maps.

In Section~\ref{sec:local} we give local models for neighbourhoods in $M$
as open subsets of $\R^{n-2} \times S^1 \times [0,\infty)$.

In Section~\ref{sec:topology} we spell out some point-set-topological
properties of the cut space $\Mcut$ and of the 
quotient map $c \colon M \to \Mcut$.

Section~\ref{sec:differential spaces}
contains an introduction to differential structures,
and in Section~\ref{sec:differential structure on Mcut}
we show that Construction~\ref{calF M}
makes $\Mcut$ into a differential space.

In Section~\ref{sec:proof} we combine the local models 
with the functoriality of the cutting construction 
with respect to inclusions of open subsets
that are invariant near the boundary
to conclude that the cut space is a manifold.

In Section~\ref{sec:submanifolds} we show that 
the cutting construction 
takes immersions (resp., submersions or embeddings) $M \to N$
to immersions (resp., submersions or embeddings) $\Mcut \to \Ncut$.
It follows that ``well behaved'' submanifolds-with-boundary of $N$
descend to submanifolds of $\Ncut$;
see Corollary~\ref{submanifold}.

Sections~\ref{sec:diff forms}--\ref{sec:contact cutting}
are about differential forms and distributions
(sub-bundles of tangent bundles).
For symplectic two-forms, contact one-forms, and contact distributions,
we recover Lerman's earlier results; in particular see 
\cite[Propositions~2.7 and~2.15 and Remark~2.14]{lerman:contact cuts}.
Our current treatment puts these results in a broader context
and provides more details.

In Section~\ref{sec:diff forms}, we show that a differential form $\beta$
on $M$ that is basic on $\del M$ and invariant near $\del M$
descends to a differential form $\beta_\cut$ on $\Mcut$.  
We show that
the map $\beta \mapsto \beta_\cut$ is linear, intertwines 
exterior derivatives, intertwines wedge products, is one-to-one,
and takes non-vanishing forms to non-vanishing forms.
We conclude that 
if $\beta$ is a symplectic two-form (resp., contact one-form) on $M$
then $\beta_\cut$ is a symplectic two-form (resp., contact one-form) 
on $\Mcut$.

In Section~\ref{sec:red} we show that 
the pullback of $\beta_\cut$ under the inclusion map $\Mred \to \Mcut$
coincides with the differential form $\beta_\red$ on $\Mred$
whose pullback to $\del M$ coincides
with the pullback of $\beta$ to $\del M$.
We also show that if $\beta$ 
is a symplectic two-form (resp., contact one-form) on $M$
then $\beta_\red$
is a symplectic two-form (resp., contact one-form) on $\Mred$,
so $\Mred$ is then a symplectic (resp., contact) submanifold of $\Mcut$.

In Sections~\ref{sec:symplectic cutting} and~\ref{sec:contact cutting},
we relate the smooth cutting procedure with differential forms
to the classical version of Lerman's 
symplectic cutting procedure~\cite{lerman:symplectic cuts}
and contact cutting procedure~\cite{lerman:contact cuts}.

In Section~\ref{sec:distributions}, we show that a distribution $E$ on $M$ 
that is $S^1$-invariant near $\del M$,
is transverse to $\del M$, and contains the tangents
to the $S^1$-orbits along $\del M$, 
descends to a distribution $E_\cut$ on $\Mcut$.
A foliation (resp., a contact distribution) on $M$ with these properties
gives a foliation (resp., a contact distribution) on $\Mcut$.
In fact, for a contact distribution $E$,
it is enough to assume that $E$ is $S^1$-invariant near $\del M$
and contains the tangents to the $S^1$-orbits along $\del M$.

In Appendix~\ref{sec:actions},
we recall some facts about actions of compact Lie groups
and their quotients.

In Appendix~\ref{sec:simultaneous}, we sketch 
the construction for simultaneous cutting
along the facets of a manifold-with-corners
that is equipped with commuting circle actions near its facets.
A-posteriori, such a manifold-with-corners 
is what is sometimes called a manifold-with-faces;
in particular, its facets are embedded submanifolds-with-corners.
We expect the results of this paper
to generalize to this setup of simultaneous cutting.
We expect that this generalization would streamline several 
procedures in the literature,
including the unfolding of folded symplectic structures
\cite{unfolding,origami},
the passage from a non-compact cobordism between compact 
Hamiltonian $T$-manifolds 
to a compact cobordism \cite{GGK:compact-noncompact},
and the equivalence of categories
between symplectic toric $T$ manifolds
and symplectic toric $T$-bundles \cite{karshon-lerman}.

\ynote{
\cite[Remark~2.13]{lerman:contact cuts} is simultaneous contact cutting,
yielding contact orbifolds.  }

\bigskip

\subsection*{Equivariant radial-squared blowups}\ 

Sections~\ref{sec:symplectic polar:diffeos}, 
\ref{sec:radial blowup}, and~\ref{sec:radial squared blowup}
constitute a second part to the paper,
in which we describe an inverse to the cutting construction.
This inverse is a modification of the radial blowup construction
that is inspired by Bredon
and which we call
the \emph{equivariant radial-squared-blowup} construction.
A baby-case is the passage 
from Cartesian coordinates $(x,y)$ with $x+iy=re^{i\theta}$
to symplectic polar coordinates $r^2$ (instead of $r$) and~$\theta$.

We start in Section~\ref{sec:symplectic polar:diffeos},
with a baby-example, describing 
equivariant diffeomorphisms in symplectic polar coordinates.
This section can be read independently of the others.
In Section~\ref{sec:radial blowup}
we describe the radial blowup construction
of a manifold along a closed manifold.
We give more details than we found in the literature.
This provides preparation 
for the equivariant-radial-squared-blowup construction,
which we introduce in Section~\ref{sec:radial squared blowup}.

The radial blowup construction 
is functorial with respect to diffeomorphisms,
so a Lie group action on a manifold $X$ 
that preserves a closed submanifold $F$ naturally lifts 
to the radial blowup $X \odot F$ of $X$ along $F$.
In contrast,
the equivariant radial-squared-blowup,
which we denote $X \oodot F$, requires a special group action:
each isotropy representation must be isomorphic 
to a product of groups, acting on a product of their representations,
where in each factor the action is either trivial 
or is transitive on the unit sphere.
This assumption goes back to so-called \emph{special $G$-manifolds},
introduced by J\"anich \cite{janich:earlier}
and Hsiang-Hsiang \cite{hsiang-hsiang},
and treated by Bredon \cite[Chap.~VI, Sec.~5 and~6]{bredon}
(though these authors focused on special cases).
In the special case of circle actions,
the equivariant-radial-squared blowup gives an inverse
to the cutting construction.

To put this in context,
we digress for a moment to discuss iterated (not simultaneous) 
radial (not equivariant-radial-squared) blowups.
For a proper action of a Lie group $G$ on $X$ such that $X/G$ is connected,
iterates of the radial blowup construction along minimal orbit type strata
leads to a manifold-with-corners $M$ with a $G$ action
with constant orbit type, which is a bundle 
whose base can be further collapsed to $M/G$.
This resolution of a group action using the radial blowup construction 
goes back to J\"anich \cite[Section~1.3]{janich}
and was further elaborated by Michael Davis \cite{davis}, 
Duistermaat and Kolk \cite[Section~2.9]{duistermaat-kolk}, 
and Albin and Melrose \cite{albin-melrose}.
Getting back from the manifold-with-corners $M$
to the $G$-manifold $X$ is not straight-forward.
Davis~\cite{davis} does this by keeping track 
of certain ``attaching data'' on $M$.
This iterated construction is different
from the simultaneous construction that we mentioned above.

Finally, we note that, 
as explained by Lerman \cite{lerman:symplectic cuts},
his original cutting construction,
which starts from a manifold cut along a hypersurface
(rather than from a manifold-with-boundary as we do), 
provides an inverse to Gompf's symplectic gluing construction~\cite{gompf}.
The equivariant case is addressed in~\cite{wardenksi}.

\bigskip

This work is inspired by collaborations with 
Eugene Lerman, River Chiang, Shintaro Kuroki, Ana Cannas da Silva,
and Liat Kessler.
With Eugene Lerman, in our classification of 
not-necessarily-compact symplectic toric manifolds \cite{karshon-lerman},
we applied a simultaneous cutting procedure,
and we used the functoriality of this procedure 
with respect to inclusions of invariant open subsets.
With River Chiang~\cite{chiang-karshon},
we use symplectic and contact cutting while keeping track of submanifolds.
With Liat Kessler~\cite{karshon-kessler}, 
we apply the smooth cutting construction to obtain
families of symplectic blowups.
With Shintaro Kuroki~\cite{karshon-kuroki}, 
we use simultaneous cutting to classify smooth manifolds 
with so-called locally standard torus actions.
With Ana Cannas da Silva, we can obtain 
so-called toric Lagrangians submanifolds of symplectic toric manifolds
through a smooth cutting construction,
continuing Cannas da Silva's earlier work
with her student Giovanni Ambrosioni.  

\subsection*{Acknowledgement}
I am grateful to Eugene Lerman, River Chiang, Shintaro Kuroki,
Ana Cannas da Silva, and Liat Kessler,
for inspiring collaborations and discussions
that have lead to this note and contributed to it in many ways.
I am grateful to River Chiang, Liat Kessler, Shintaro Kuroki,
and Eugene Lerman
for comments on drafts of this note.
I am grateful for Eugene Lerman for teaching me about $C^\infty$ rings
and for Jordan Watts for enlightening discussions 
about differential spaces.
I am grateful to Alejandro Uribe for pointing out
the possible relation between cutting and radial blowups \`a-la-Melrose.
I am grateful to Shintaro Kuroki for explaining to me 
subtle aspects of the work of J\"anich, Bredon, and Davis.
If I mistakenly omitted your name 
and you're one of the people who enriched my mathematical world
in ways that are relevant to this paper,
please do not hesitate to mention this to me.
This research is partly funded by the Natural Sciences and Engineering
Research Council of Canada.


{

\section{Functoriality 
with respect to equivariant transverse maps}
\labell{sec:functoriality}

Let $M$ and $N$ be manifolds-with-boundary
equipped with free circle actions on neighbourhoods $U_M$ and $U_N$
of the boundaries $\del M$ and $\del N$.
A map $ \psi \colon M \to N $ from $M$ to $N$ 
is an \textbf{equivariant transverse map} 
if it has the following properties.
\begin{itemize}
\item
$\psi$ is smooth.
\item For some---hence every---invariant boundary defining function 
$f_N \colon N \to \R_{\geq 0}$ on $N$,
the composition $f_N \circ \psi \colon M \to \R_{\geq 0}$
is an invariant boundary defining function on $M$.

Hence, $\psi$ takes $\intM$ to $\intN$ and $\del M$ to $\del N$.

\item
There exists a neighbourhood $V$ of $\del M$ in $M$,
contained in $U_M \cap \psi^{-1}U_N$ and $S^1$-invariant, 
such that $\psi|_V \colon V \to U_N$ is $S^1$-equivariant.
\end{itemize}

\begin{Example}
Let $M$ be a manifold-with-boundary,
equipped with a free circle action on a neighbourhood $U_M$ of the boundary.
Then the inclusion maps of $U_M$ and of $\intM$ into $M$
are equivariant transverse maps.
\eoe
\end{Example}

Manifolds-with-boundary equipped with free circle actions
on neighbourhoods of the boundary,
and their equivariant transverse maps, form a category.
The cutting construction gives a functor on this category,
which we describe in the following lemmas.

\begin{Lemma} \labell{descends}
Let $M$ and $N$ be manifolds-with-boundary, 
equipped with free circle actions on neighbourhoods of the boundary.
Let $\Mcut$ and $N_\cut$ be the corresponding cut spaces, 
and let $c_M \colon M \to \Mcut$ and $c_N \colon N \to N_\cut$
be the quotient maps.
Let $\psi \colon M \to N$ be an equivariant transverse map.
Then there exists a unique map $\bm{\psi_{\cut}} \colon \Mcut \to N_\cut$
such that the following diagram commutes.
$$ \xymatrix{
    M \ar[d]_{c_M} \ar[r]^{\psi} & N \ar[d]^{c_N} \\
    \Mcut \ar[r]_{\psi_{\cut}} & N_\cut
} $$
Moreover, 
\begin{itemize}
\item
$\psi_{\cut}$ takes $\intM_\cut$ to $\intN_\cut$ and $\Mred$ to $\Nred$.
\item
If $\psi$ is one-to-one, so is $\psi_\cut$.
If $\psi$ is onto, so is $\psi_\cut$.
\item
$\psi_{\cut}$ is continuous.
\item
If $\psi$ is open, so is $\psi_\cut$.
\item
If $\psi$ is open as a map to its image, so is $\psi_\cut$.
\end{itemize}
\end{Lemma}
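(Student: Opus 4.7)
The plan is to construct $\psi_\cut$ set-theoretically and then verify the listed properties in order. The whole argument rides on two structural features of an equivariant transverse map: the boundary-preservation identity $\psi^{-1}(\del N) = \del M$ (implied by $f_N \circ \psi$ being a boundary defining function on $M$), and the $S^1$-equivariance of $\psi$ on an invariant neighbourhood $V \supset \del M$. Together these imply that $\psi$ sends $\sim_M$-equivalent points to $\sim_N$-equivalent ones: if $x \sim_M x'$ with $x \neq x'$ then $x, x'$ lie in $\del M$ on a common $S^1$-orbit, so by equivariance and the invariance of $\del N$ their images lie on a common $S^1$-orbit in $\del N$.

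Hence $c_N \circ \psi$ factors through $c_M$, producing a unique $\psi_\cut$ (surjectivity of $c_M$ forces uniqueness). The inclusions $\psi_\cut(\intM_\cut) \subset \intN_\cut$ and $\psi_\cut(\Mred) \subset \Nred$ follow from $\psi(\intM) \subset \intN$ and $\psi(\del M) \subset \del N$; surjectivity of $\psi_\cut$ follows from surjectivity of $c_N$ and the commuting square; and continuity is the universal property of the quotient topology applied to the continuous map $c_N \circ \psi$. For injectivity, suppose $\psi_\cut([x]) = \psi_\cut([y])$, i.e., $\psi(x) \sim_N \psi(y)$; either $\psi(x) = \psi(y)$, so injectivity of $\psi$ gives $x = y$, or $\psi(x), \psi(y) \in \del N$ lie on a common $S^1$-orbit, in which case boundary preservation puts $x, y \in \del M$, and writing $\psi(x) = a \cdot \psi(y) = \psi(a \cdot y)$ via equivariance on $\del M$, injectivity of $\psi$ yields $x = a \cdot y$, so $[x] = [y]$.

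The crux is openness, which hinges on the following saturation lemma: $\psi$ sends $\sim_M$-saturated subsets of $M$ to $\sim_N$-saturated subsets of $N$. Indeed, if $U \subset M$ is saturated and $\psi(x) \in \psi(U) \cap \del N$, then $x \in \del M$ by boundary preservation, so $S^1 \cdot x \subset U$ by saturation of $U$, whence $S^1 \cdot \psi(x) = \psi(S^1 \cdot x) \subset \psi(U)$ by equivariance. Given $W$ open in $\Mcut$, set $U := c_M^{-1}(W)$, which is open and saturated in $M$; if $\psi$ is open then $\psi(U)$ is open in $N$, and saturation of $\psi(U)$ gives $c_N^{-1}(\psi_\cut(W)) = c_N^{-1}(c_N(\psi(U))) = \psi(U)$, so $\psi_\cut(W)$ is open in $\Ncut$. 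The ``open onto image'' variant is handled by the same argument applied inside the subspace $\psi(M) \subset N$, which is itself $\sim_N$-saturated by the saturation lemma applied with $U = M$.

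The main obstacle is the saturation lemma: it is the one step that genuinely uses all three ingredients---boundary preservation, $\sim_M$-saturation of $U$, and equivariance on $\del M$---in concert. Everything else is routine bookkeeping with the quotient topology.
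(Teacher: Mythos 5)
Your proposal is correct and follows essentially the same route as the paper's proof: descend via the equivariance of $\psi$ on the boundary, chase the quotient-topology diagram for continuity, and invoke the fact that $\psi$ takes saturated sets to saturated sets for openness. The only difference is that you spell out the saturation lemma (using $\psi^{-1}(\del N) = \del M$, which follows from the boundary-defining-function condition) and the case analysis for injectivity, where the paper leaves these as one-line observations.
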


\begin{Remark}
The quotient maps $c_M$ and $c_N$ are not open. 
\eor
\end{Remark}

A subset $W$ of $M$ is \textbf{saturated} with respect
to the equivalence relation $\sim$
if for every two points $x$ and $x'$ in $M$, 
if $x \in W$ and $x \sim x'$, then $x' \in W$.

\begin{proof}[Proof of Lemma~\ref{descends}]
Because $\psi$ restricts to an $S^1$ equivariant map
$\psi|_{\del M} \colon \del M \to \del N$,
it descends to a unique map $\psi_\cut$ such that the diagram commutes.
Because $\psi$ takes $\intM$ to $\intM$
and $\del M$ to $\del N$,
the map $\psi_\cut$ takes $\intM_\cut$ to $\intN_\cut$ and $\Mred$ to $\Nred$.
Assuming that $\psi$ is onto,
onto-ness of $\psi_\cut$ follows from that of $c_N$.
Assuming that $\psi$ is one-to-one,
one-to-one-ness of $\psi_\cut$ follows 
from those of $\psi|_{\intM}$ and of $\psi|_{\del M}$
and the equivariance of $\psi|_{\del M}$.
The continuity of $\psi_\cut$ follows by chasing the commuting diagram,
noting that $\psi$ and $c_N$ are continuous
and that the topology of $\Mcut$ is induced from the quotient map
$c_M \colon M \to \Mcut$.
Assuming that $\psi$ is open,
the openness of $\psi_\cut$ also follows by chasing the commuting square,
noting that $c_M$ is continuous, 
that $\psi$ is open and takes saturated sets to saturated sets,
and that the topology of $\Ncut$ is induced from the quotient map
$c_N \colon N \to \Ncut$.
A similar argument holds if $\psi$ is open as a map to its image,
noting that the image of $\psi$ is the preimage of the image of $\psi_\cut$.
\end{proof}

\begin{Lemma} \labell{composition}
Let $M_1$, $M_2$, and $M_3$ be manifolds with boundary,
with free circle actions near the boundary.
Let 
$$ \xymatrix{ M_1 \ar[r]^{\psi_1} & M_2 \ar[r]^{\psi_2} & M_3 } $$
be equivariant transverse functions.
Then $ (\psi_2 \circ \psi_1)_\cut = (\psi_2)_\cut \circ (\psi_1)_\cut$.
\end{Lemma}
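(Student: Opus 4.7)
The plan is to exploit the uniqueness clause in Lemma~\ref{descends}. My proof will have two main steps: first, verify that the composition $\psi_2 \circ \psi_1$ is itself an equivariant transverse map (so that $(\psi_2 \circ \psi_1)_\cut$ is actually defined); second, check that both maps in the claimed equality fit into the same commuting square, and invoke uniqueness.

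For the first step, I would verify the three defining properties in turn. Smoothness of $\psi_2 \circ \psi_1$ is immediate. For the boundary-defining-function condition, fix an invariant boundary defining function $f_{M_3}$ on $M_3$; then $f_{M_3} \circ \psi_2$ is an invariant boundary defining function on $M_2$ by the hypothesis on $\psi_2$, and hence $(f_{M_3} \circ \psi_2) \circ \psi_1 = f_{M_3} \circ (\psi_2 \circ \psi_1)$ is an invariant boundary defining function on $M_1$ by the hypothesis on $\psi_1$. For the equivariance near the boundary, let $V_1 \subset U_{M_1} \cap \psi_1^{-1}U_{M_2}$ and $V_2 \subset U_{M_2} \cap \psi_2^{-1}U_{M_3}$ be invariant neighbourhoods of $\partial M_1$ and $\partial M_2$ on which $\psi_1$ and $\psi_2$ respectively are equivariant; then $V := V_1 \cap \psi_1^{-1}(V_2)$ is an invariant neighbourhood of $\partial M_1$ in $U_{M_1} \cap (\psi_2 \circ \psi_1)^{-1}U_{M_3}$ on which $\psi_2 \circ \psi_1$ is equivariant as a composition of equivariant maps.

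For the second step, the commuting squares from Lemma~\ref{descends} applied to $\psi_1$ and to $\psi_2$ can be horizontally concatenated to form the diagram
$$ \xymatrix{
    M_1 \ar[d]_{c_{M_1}} \ar[r]^{\psi_1} & M_2 \ar[d]^{c_{M_2}} \ar[r]^{\psi_2} & M_3 \ar[d]^{c_{M_3}} \\
    (M_1)_\cut \ar[r]_{(\psi_1)_\cut} & (M_2)_\cut \ar[r]_{(\psi_2)_\cut} & (M_3)_\cut
} $$
whose outer rectangle commutes. Hence the composite $(\psi_2)_\cut \circ (\psi_1)_\cut$ fits into the commuting square that characterises $(\psi_2 \circ \psi_1)_\cut$. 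By the uniqueness clause of Lemma~\ref{descends} applied to the equivariant transverse map $\psi_2 \circ \psi_1$, the equality $(\psi_2 \circ \psi_1)_\cut = (\psi_2)_\cut \circ (\psi_1)_\cut$ follows.

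I do not anticipate any real obstacle here: the proof is a routine diagram chase once the equivariant transverse category is shown to be closed under composition. The only place requiring modest care is the explicit exhibition of the invariant neighbourhood $V$ in the first step; everything else is formal.
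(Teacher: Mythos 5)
Your proof is correct, but it takes a different route from the one in the paper. The paper's proof is a short density-plus-continuity argument: since $\psi_1$ takes $\intM_1$ to $\intM_2$ and $\psi_2$ takes $\intM_2$ to $\intM_3$, the desired equality holds on the open dense subset $(\intM_1)_\cut$ (where the quotient map is a bijection onto its image), and hence everywhere by continuity of both sides. Your argument instead invokes the uniqueness clause of Lemma~\ref{descends}: you verify explicitly that $\psi_2 \circ \psi_1$ is an equivariant transverse map, concatenate the two commuting squares, and conclude by uniqueness of the descended map. Both are sound. Your approach has the advantage of actually proving that equivariant transverse maps are closed under composition --- a fact the paper asserts (when it states that these objects and morphisms form a category) but does not prove within this lemma --- and it is the more overtly categorical argument. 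The paper's approach is shorter because it defers that closure property and works directly with the dense open set where the quotient map is a diffeomorphism onto its image. A small point worth noting in your step-one verification: you should also observe that the condition ``for some --- hence every --- invariant boundary defining function'' is unambiguous for the composite, which follows exactly as you wrote it by chaining the two hypotheses; this confirms the ``hence every'' part is inherited as well.
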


\begin{proof}
Since $\psi_1$ takes $\intM_1$ to $\intM_2$
and $\psi_2$ takes $\intM_2$ to $\intM_3$,
and by the definition of $(\psi_1)_\cut$ and $(\psi_2)_\cut$,
the equality is true on the open dense subset $(\intM_1)_\cut$.
By continuity, the equality is true everywhere.
\end{proof}

\begin{Lemma} \labell{identity}
Let $M$ be a manifold with boundary, 
with a free circle action near the boundary.
The identity map $\Id \colon M \to M$
is an equivariant transverse map, 
and $\Id_\cut$ is the identity map on $\Mcut$.
\end{Lemma}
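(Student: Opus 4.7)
The plan is to verify the two assertions in turn, both of which reduce to unwinding definitions.

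For the first assertion, I would check the three bullets in the definition of an equivariant transverse map with $M = N$ and $\psi = \Id$. Smoothness is immediate. For the second bullet, any invariant boundary defining function $f_M$ on $M$ satisfies $f_M \circ \Id = f_M$, which is an invariant boundary defining function on $M$ by assumption. For the third bullet, I need an $S^1$-invariant neighbourhood $V$ of $\del M$ in $M$ with $V \subseteq U_M \cap \Id^{-1}(U_M) = U_M$ such that $\Id|_V$ is $S^1$-equivariant; any $S^1$-invariant neighbourhood of $\del M$ inside $U_M$ works, and such a neighbourhood exists by the footnote remark (or by Lemma~\ref{quotient map}\eqref{intersection}) that every neighbourhood of $\del M$ contains an invariant one. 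Equivariance of the identity is automatic.

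For the second assertion, I would appeal to the uniqueness clause in Lemma~\ref{descends}. The identity map $\Id_{\Mcut} \colon \Mcut \to \Mcut$ makes the square
\[ \xymatrix{
    M \ar[d]_{c_M} \ar[r]^{\Id} & M \ar[d]^{c_M} \\
    \Mcut \ar[r]_{\Id_{\Mcut}} & \Mcut
} \]
commute trivially, since $c_M \circ \Id = c_M = \Id_{\Mcut} \circ c_M$. By the uniqueness part of Lemma~\ref{descends}, $\Id_\cut = \Id_{\Mcut}$.

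There is no real obstacle here; the lemma is essentially a definition-check needed to complete the verification, together with Lemma~\ref{composition}, that cutting is a functor. The only mildly non-trivial point is producing the invariant neighbourhood $V$ required by the third bullet, which is handled by the invariant-averaging remark cited above.
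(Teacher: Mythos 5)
Your proof is correct, and for the second assertion it follows a slightly different (and arguably more direct) route than the paper's. The paper argues that, by construction, $\Id_\cut$ restricts to the identity on the open dense subset $\intM_\cut$, and then concludes by continuity that it is the identity everywhere. You instead invoke the set-theoretic uniqueness clause of Lemma~\ref{descends}: since $c_M \circ \Id = \Id_{\Mcut} \circ c_M$ and $c_M$ is surjective, the commuting square forces $\Id_\cut = \Id_{\Mcut}$. Both arguments are valid; yours avoids the continuity-plus-density step entirely, while the paper's phrasing parallels the proof of Lemma~\ref{composition}, which does use the density-and-continuity pattern. You also explicitly verify that $\Id$ is an equivariant transverse map, which the paper leaves implicit; that verification is correct and handles the only mildly subtle point (existence of an invariant neighbourhood $V \subseteq U_M$ of $\del M$) in the same way the paper would.
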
 

\begin{proof}
By the definition of the map $\Id_\cut$,
this map restricts to the identity map on $\intM_\cut$.
By continuity, $\Id_\cut$ is the identity map everywhere.
\end{proof}

\begin{Corollary} \labell{inverse}
Let $M$ and $N$ be manifolds with boundary,
with free circle actions near their boundaries.
Let $\psi \colon M \to N$ be an equivariant diffeomorphism;
let $\psi^{-1} \colon N \to M$ be its inverse. 
Then $\psi_\cut \colon \Mcut \to \Ncut$ is invertible,
and its inverse is $(\psi^{-1})_\cut \colon \Ncut \to \Mcut$.
\end{Corollary}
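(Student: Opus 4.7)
The plan is to deduce this as a purely formal consequence of functoriality, using Lemmas~\ref{composition} and~\ref{identity}. The only non-formal content is the observation that $\psi^{-1}$ is itself an equivariant transverse map, so that $(\psi^{-1})_\cut$ is defined and the hypotheses of Lemma~\ref{composition} apply to both composites $\psi^{-1} \circ \psi$ and $\psi \circ \psi^{-1}$.

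First I would verify that $\psi^{-1} \colon N \to M$ is an equivariant transverse map. Smoothness is immediate because $\psi$ is a diffeomorphism of manifolds-with-boundary. For the boundary-defining-function condition, pick any invariant boundary defining function $f_M$ on $M$. Since $\psi$ is a diffeomorphism sending $\del M$ to $\del N$, the pullback $f_M \circ \psi^{-1}$ vanishes exactly on $\del N$ with non-vanishing differential along $\del N$, and it is $S^1$-invariant on $\psi(V)$, where $V$ is the invariant neighbourhood on which $\psi$ is equivariant; this exhibits $f_M \circ \psi^{-1}$ as an invariant boundary defining function on $N$. Finally, equivariance of $\psi^{-1}$ near $\del N$ is equivalent to equivariance of $\psi$ near $\del M$, applied on the neighbourhood $\psi(V)$ of $\del N$.

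Then I would apply the functorial properties: composing $\psi^{-1} \circ \psi = \Id_M$ and invoking Lemma~\ref{composition} together with Lemma~\ref{identity} gives
$$ (\psi^{-1})_\cut \circ \psi_\cut \, = \, (\psi^{-1} \circ \psi)_\cut \, = \, (\Id_M)_\cut \, = \, \Id_{\Mcut}. $$
Symmetrically, composing $\psi \circ \psi^{-1} = \Id_N$ yields $\psi_\cut \circ (\psi^{-1})_\cut = \Id_{\Ncut}$. Hence $\psi_\cut$ is invertible with inverse $(\psi^{-1})_\cut$.

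There is no real obstacle here; the argument is entirely formal once $\psi^{-1}$ is known to live in the same category. The verification that $\psi^{-1}$ is equivariant transverse is the only step requiring any thought, and it reduces to remarks that were already implicit in the definition of the category.
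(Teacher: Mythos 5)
Your proposal is correct and follows the same route as the paper, which simply invokes Lemmas~\ref{composition} and~\ref{identity}. Your explicit verification that $\psi^{-1}$ is itself an equivariant transverse map is a worthwhile detail that the paper leaves implicit (it is needed for $(\psi^{-1})_\cut$ to be defined and for Lemma~\ref{composition} to apply), and your check is sound.
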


\begin{proof}
This follows from Lemma~\ref{composition}
and Lemma~\ref{identity}.
\end{proof}

Thus, we get a functor, from the category 
whose objects are manifolds-with-boundary
equipped with free circle actions near the boundary
and whose morphisms are equivariant transverse maps,
to the category of topological spaces and their continuous maps.

Until now, we only considered the cut space as a topological space.
By Lemmas~\ref{descends}, \ref{composition}, and \ref{identity},
the cutting construction defines a functor
from the category of manifolds-with-boundary
equipped with free circle actions on neighbourhoods of their boundaries,
with their equivariant transverse maps,
to the category of topological spaces, with their continuous maps. 
The following lemma shows that the cutting functor
also takes smooth maps to smooth maps,
where the meaning of ``smooth'' 
is in terms of the collections of real valued functions
of Construction~\ref{calF M}.

\begin{Lemma} \labell{descends smooth}
Let $M$ and $N$ be manifolds with boundary,
with free circle actions near the boundary,
and let $\calF_M$ and $\calF_N$ be the sets of real valued functions 
on $\Mcut$ and on $\Ncut$
that are obtained from Construction~\ref{calF M}.
Let $\psi \colon M \to N$ be an equivariant transverse map.
Then for each function $h_N$ in $\calF_N$, 
the composition $h_M := h_N \circ \psi_\cut$ is in $\calF_M$.
\end{Lemma}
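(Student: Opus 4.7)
The plan is to verify the two conditions $(\calF 1)$ and $(\calF 2)$ of Construction~\ref{calF M} for $h_M$ by pulling back the corresponding data for $h_N$ through $\psi$. The key organizing identity is the commuting square from Lemma~\ref{descends}: writing $\wh{h}_M := h_M \circ c_M$ and $\wh{h}_N := h_N \circ c_N$, we have
$$ \wh{h}_M \; = \; h_N \circ \psi_\cut \circ c_M \; = \; h_N \circ c_N \circ \psi \; = \; \wh{h}_N \circ \psi. $$
So every requirement on $\wh{h}_M$ becomes a requirement about $\wh{h}_N$ composed with the smooth map $\psi$.

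For condition $(\calF 1)$, since $\psi$ is smooth and takes $\intM$ into $\intN$ (part of the definition of an equivariant transverse map), the restriction $\wh{h}_M|_{\intM} = \wh{h}_N|_{\intN} \circ \psi|_{\intM}$ is a composition of smooth maps, hence smooth.

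For condition $(\calF 2)$, the crucial observation is that Lemma~\ref{FM indep of f} allows me to pick any convenient invariant boundary defining function on $M$. I will choose $f_M := f_N \circ \psi$, which is an invariant boundary defining function on $M$ precisely because $\psi$ is equivariant transverse. Starting with data for $h_N$---an invariant open neighbourhood $V_N \subset U_N$ of $\del N$ and a smooth $S^1$-equivariant (in the sense of ($\calF 2$)(a)) function $H_N \colon V_N \times \C \to \R$ with $\wh{h}_N(y) = H_N(y,\sqrt{f_N(y)})$ on $V_N$---and using the invariant neighbourhood $V \subset U_M \cap \psi^{-1}(U_N)$ of $\del M$ on which $\psi$ is equivariant, I set
$$ V_M := V \cap \psi^{-1}(V_N) \quad\text{and}\quad H_M(x,z) := H_N(\psi(x), z). $$
Then $V_M$ is open and, because $V_N$ is $S^1$-invariant and $\psi|_V$ is equivariant, $V_M$ is $S^1$-invariant. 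The equivariance of $H_M$ follows from the equivariance of $\psi|_V$ and of $H_N$, and the identity $\wh{h}_M(x) = H_M(x,\sqrt{f_M(x)})$ on $V_M$ drops out from the corresponding identity for $h_N$ applied at $\psi(x)$, combined with $f_N(\psi(x)) = f_M(x)$.

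The only potential obstacle is a bookkeeping one: ensuring that $V_M$ is genuinely a neighbourhood of $\del M$ inside $U_M$ and is $S^1$-invariant, which forces me to intersect $V$ with $\psi^{-1}(V_N)$ and to use both the equivariance of $\psi|_V$ and the inclusion $V \subset \psi^{-1}(U_N)$. Once $V_M$ and $H_M$ are in hand, both clauses of $(\calF 2)$ are immediate from substitution, and the freedom granted by Lemma~\ref{FM indep of f} to adopt $f_M = f_N \circ \psi$ removes any ambiguity about which boundary defining function on $M$ is being used.
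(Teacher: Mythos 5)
Your proposal is correct and follows essentially the same route as the paper's proof: verify $(\calF 1)$ via the identity $\wh{h}_M|_{\intM} = \wh{h}_N \circ \psi|_{\intM}$, and verify $(\calF 2)$ by choosing $f_M := f_N \circ \psi$ and pulling back $H_N$ to $H_M(x,z) := H_N(\psi(x),z)$ on a suitably shrunk invariant neighbourhood of $\del M$. The only difference is a minor bookkeeping one: the paper intersects with one extra invariant neighbourhood $U_{f_N}$ on which $f_N$ is $S^1$-invariant, but since $V_N$ already carries the relevant invariance this does not change the argument.
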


{
\begin{proof}
Let $h_N \colon \Ncut \to \R$ be in $\calF_N$.
We would like to show that $h_M := h_N \circ \psi_\cut \colon \Mcut \to \R$
is in $\calF_M$.
Write
$$ (h_M \circ c_M)|_{\intM}
 = (h_N \circ \psi_\cut \circ c_M)|_{\intM}
 = (h_N \circ c_N) \circ \psi|_{\intM} .$$
The right hand side is smooth, 
because $\psi$ restricts to a smooth function from $\intM$ to $\intN$,
and because---since $h_N$ is in $\calF_N$---the
composition $h_N \circ c_N|_{\intN}$ is smooth.
So the left hand side is smooth too,
and so $h_M \circ c_M$ 
satisfies Condition~($\calF 1$) of Construction~\ref{calF M}.
It remains to show that $h_M \circ c_M$ satisfies Condition~($\calF 2$) 
of Construction~\ref{calF M} with respect to 
some invariant boundary defining function on $M$.

Because $\psi \colon M \to N$ is an equivariant transverse map,
there exists a neighbourhood $V_\psi$ of $\del M$,
contained in $U_M \cap \psi^{-1}U_N$ and $S^1$-invariant,
such that $\psi|_{V_\psi} \colon V_\psi \to U_N$
is $S^1$ equivariant.  Fix such a $V_\psi$.

Let $f_N \colon N \to \R_{\geq 0}$ be an invariant boundary defining function 
on $N$.  Let $U_{f_N}$ be a neighbourhood of $\del N$,
contained in $U_N$ and $S^1$-invariant, on which $f_N$ is $S^1$-invariant.
Then $f_M := f_N \circ \psi$ is an invariant boundary defining function 
on $M$,
and $U_{f_M} := V_\psi \cap \psi^{-1}U_{f_N}$ is a neighbourhood of $\del M$,
contained in $U_M$ and $S^1$-invariant, on which $f_M$ is $S^1$-invariant.

Because $h_N$ is in $\calF_N$,
there exist an $S^1$-invariant open neighbourhood $V_N$ of $\del N$,
contained in $U_N$ and $S^1$-invariant,
and a smooth function $H_N \colon V_N \times \C \to \R$,
such that $H_N(a \cdot y, z) = H_N(y,az)$ 
for all $a \in S^1$ and $(y,z) \in V_N \times \C$,
and such that $h_N(c_N(y)) = H_N(y,\sqrt{f_N(y)})$ 
for all $y \in V_N$.
Fix such $V_N$ and $H_N$.

$V_M := U_{f_M} \cap \psi^{-1}V_N$ is a neighbourhood of $\del M$,
contained in $U_{f_M}$ and $S^1$-invariant.
Define $H_M \colon V_M \times \C \to \R$ by
$H_M(x,z) := H_N(\psi(x),z)$.
Then $H_M$ is smooth, $H_M(a \cdot x,z) = H_M(x,az)$
for all $(x,z) \in V_M \times \C$ and $a \in S^1$,
and $h_M(c_M(x)) = H_M(x,\sqrt{f_M(x)})$ for all $x \in V_M$.
Thus, $h_M \circ c_M$ satisfies 
Condition ($\calF 2$)(a,b) of Construction~\ref{calF M}
with respect to $f_M$.
\end{proof}
}

}

\section{A local model}
\labell{sec:local}

Let $\eps > 0$ be a positive number,
let $D^2$ be the open disc of radius $\sqrt{\eps}$ about the origin in $\R^2$,
and let $D^{n-2}$ be any open disc in $\R^{n-2}$.
{

\begin{Lemma} \labell{model}
Equip
$$ N := D^{n-2} \times S^1 \times [0,\eps) $$
with the circle action $a \cdot (\xi,b,s) = (\xi,ab,s)$
and with the invariant boundary defining function $(\xi,b,s) \mapsto s$.
Then the map
$$
 \wh{\psi} \colon N \to D^{n-2} \times D^2
\quad \text{ given by } \quad (\xi,b,s) \mapsto (\xi , b\sqrt{s}) 
$$
induces a homeomorphism
$$ \psi \colon \Ncut \to D^{n-2} \times D^2 $$
and induces a bijection 
$$ f \mapsto f \circ \psi $$
from the set of real valued smooth functions 
on $D^{n-2} \times D^2$
to the set $\calF_N$ of real valued functions on~$\Ncut$
that is described in Construction~\ref{calF M}.
\end{Lemma}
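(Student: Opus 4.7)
The plan is to proceed in three stages: first verify that $\wh{\psi}$ descends to a bijection; then upgrade the bijection to a homeomorphism; finally use the circle-equivariance built into Condition~($\calF 2$) to identify $\calF_N$ with $C^\infty(D^{n-2} \times D^2)$.

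For the first stage, I would observe that the equivalence relation on $N$ collapses $(\xi,b,0)$ with $(\xi,b',0)$ for all $b,b'\in S^1$ and otherwise keeps points distinct, while $\wh{\psi}(\xi,b,0) = (\xi,0)$ is independent of $b$ and $\wh{\psi}$ restricts to a diffeomorphism $D^{n-2}\times S^1\times(0,\eps) \to D^{n-2}\times(D^2\ssminus\{0\})$. Hence $\wh{\psi}$ factors through $c_N$ to give a well-defined bijection $\psi \colon \Ncut \to D^{n-2}\times D^2$. Continuity of $\psi$ then follows from the universal property of the quotient topology, since $\wh{\psi} = \psi\circ c_N$ is continuous. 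For openness of $\psi$, the point is that open subsets of $\Ncut$ correspond to open $\sim$-saturated subsets $\wt{W}\subseteq N$; any such $\wt{W}$ that meets the boundary contains, by compactness of $S^1$, a box $U_\xi \times S^1 \times [0,\delta)$, whose image under $\wh{\psi}$ is $U_\xi \times \{z \in D^2 : |z|^2 < \delta\}$, which is open. On the interior $\wh{\psi}$ is a diffeomorphism, so it is open there as well. Thus $\psi$ is a continuous open bijection, hence a homeomorphism.

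For the bijection of function spaces, the easy direction is to take $f \in C^\infty(D^{n-2}\times D^2)$ and show $h := f\circ\psi \in \calF_N$. Condition~($\calF 1$) is immediate from the smoothness of $\wh{\psi}$ on $\intN$. For~($\calF 2$) I would take $V := N$ and define
\[
H\bigl((\xi,b,s),z\bigr) \;:=\; f(\xi, b z),
\]
which is smooth, satisfies $H(a\cdot(\xi,b,s),z) = f(\xi,abz) = H((\xi,b,s),az)$, and recovers $\wh{h}(\xi,b,s) = f(\xi,b\sqrt{s}) = H((\xi,b,s),\sqrt{s})$.

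The main obstacle, and the one on which the whole lemma turns, is the converse: given $h \in \calF_N$, exhibit a smooth function on $D^{n-2}\times D^2$ that pulls back to $h$. This is the parametric upgrade of Lemma~\ref{cylinder}. Starting with $H \colon V \times \C \to \R$ as provided by~($\calF 2$), the equivariance relation $H(a\cdot x,z) = H(x,az)$ applied with $a = b^{-1}$ yields
\[
H\bigl((\xi,b,s),z\bigr) \;=\; H\bigl((\xi,1,s),bz\bigr),
\]
and in particular $\wh{h}(\xi,b,s) = H((\xi,1,s), b\sqrt{s})$. On the image of $V$ under $\psi$, setting $s = |z|^2$ and $b = z/|z|$ for $z \neq 0$ gives
\[
(h\circ\psi^{-1})(\xi,z) \;=\; H\bigl((\xi,1,|z|^2), z\bigr),
\]
and the same formula holds at $z = 0$ by equivariance with $a\cdot 0 = 0$. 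The right-hand side is manifestly smooth in $(\xi,z)$, and away from $\{z = 0\}$ it agrees with the smooth function $h\circ\psi^{-1}$ coming from ($\calF 1$) and the interior diffeomorphism, so a partition-of-unity argument (or direct gluing, since the two expressions already agree on their overlap) produces a globally smooth $f$ on $D^{n-2}\times D^2$ with $f\circ\psi = h$. Uniqueness of $f$ is automatic from surjectivity of $\psi$, so $f \mapsto f\circ\psi$ is the desired bijection.
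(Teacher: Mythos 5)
Your proof is correct, and the function-space half is essentially the paper's argument: reduce via the equivariance relation to the slice $b=1$, substitute $s=|z|^2$, read off $f(\xi,z)=H\bigl((\xi,1,|z|^2),z\bigr)$ near the axis, and glue with the interior smoothness supplied by ($\calF 1$). (You even land on the correct formula: the paper writes $\ol z$ in the last slot, both here and in Lemma~\ref{cylinder}, but a quick check against the smooth model $H\bigl((\xi,b,s),z\bigr)=f(\xi,bz)$ shows it should be $z$, since that gives $H\bigl((\xi,1,|z|^2),z\bigr)=f(\xi,z)$.) Where you genuinely diverge is in promoting the bijection $\psi$ to a homeomorphism: the paper notes that $\wh\psi$ is proper, so $\psi$ is proper, hence a closed map since the target is Hausdorff and locally compact, hence a homeomorphism; you instead prove $\psi$ open by passing to $\sim$-saturated open subsets of $N$ and using a tube-lemma argument to fit boxes $U_\xi\times S^1\times[0,\delta)$ around boundary orbits. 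Both are sound; the properness route is a touch shorter and more portable, while your openness argument is self-contained and makes the quotient topology concrete. One last small point: the partition of unity you mention at the end is superfluous --- your two local expressions for $f$ already agree on the overlap (both equal $h\circ\psi^{-1}$), so smoothness on each member of an open cover is enough.
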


\begin{proof}
By its definition, the map $\wh{\psi}$ descends to a bijection $\psi$,
such that we have a commuting diagram
$$ \xymatrix{
 N \ar[rr]^{c} \ar@/_1pc/_{ \wh{\psi} }[rrrr] 
 && \Ncut \ar[rr]^{\psi} && D^{n-2} \times \C \, .
} $$
Because $\wh{\psi}$ is continuous
and the topology of $\Ncut$ 
is induced from the quotient map $c \colon N \to \Ncut$,
the map $\psi$ is continuous.
Because $\wh{\psi}$ is proper and $c$ is continuous, 
$\psi$ is proper;
because the target space $D^{n-2} \times \C$
is Hausdorff and locally compact, the proper map $\psi$ is a closed map.
Being a continuous bijection and a closed map, $\psi$ is a homeomorphism.

We need to show that a function $f \colon D^{n-2} \times D^2 \to \R$
is smooth iff the function 
$\wh{h}(\xi,b,s) := f(\xi,b\sqrt{s})$ from $N$ to $\R$ 
satisfies Conditions ($\calF 1$) and ($\calF 2$) of Construction~\ref{calF M}.
Because $\wh{\psi}$ restricts to a diffeomorphism
from $\intN$ to $D^{n-2} \times (D^2 \ssminus \{0\}) $,
Condition~($\calF 1$) of Construction~\ref{calF M} is equivalent to 
the restriction of $f$ to $D^{n-2} \times (D^2 \ssminus \{0\})$ being smooth.
It remains to show that
$\wh{h}$ satisfies Condition~($\calF 2$) of Construction~\ref{calF M} 
if and only if $f$ is smooth near $D^{n-2} \times \{ 0 \}$.

We proceed as in Lemma~\ref{cylinder}.

Suppose that $\wh{h}$ satisfies Condition~($\calF 2$) 
of Construction~\ref{calF M}. 
Let $V$ be an $S^1$-invariant open neighbourhood of $\del N$ 
and $H \colon V \times D^2 \to \R$ a smooth function
that satisfies Conditions ($\calF 2$)(a,b) of Construction~\ref{calF M}.
Because $V$ is an $S^1$-invariant open subset of $N$,
its image in $D^{n-2} \times D^2$ is open.
Writing $f(\xi,z) = H((\xi,1,|z|^2),\ol{z})$,
we see that $f$ is smooth on this image.

Conversely, suppose that $f$ is smooth near $D^{n-2} \times \{ 0 \}$.
Let $V$ be the preimage in $N$ 
of an $S^1$-invariant neighbourhood of $D^{n-2} \times \{ 0 \}$
on which $f$ is smooth.  
Then the function $H \colon V \times D^2 \to \R$ 
defined by $H((\xi,b,s),z) := f(\xi,bz)$ 
satisfies Conditions~($\calF 2$)(a,b) of Construction~\ref{calF M}.
\end{proof}

}

\begin{Proposition} \labell{prop:local}
Let $M$ be an $n$ dimensional manifold-with-boundary,
equipped with a circle action on a neighbourhood $U_M$ of the boundary.
Let $f \colon M \to \R$ be an invariant boundary defining function.
Then for each point $x$ in $\del M$ 
there exists an $S^1$-invariant open neighbourhood~$W$ of $x$ in $U_M$,
a positive number $\eps > 0$, and a diffeomorphism 
$$W  \to  D^{n-2} \times S^1 \times [0,\eps) $$
that intertwines the $S^1$ action on $W$
with the rotations of the middle factor
and that intertwines the function $f|_W$
with the projection to the last factor.

Moreover, if $v$ is a vector field near $\del M$
that is transverse to $\del M$
and that is $S^1$-invariant near $\del M$,
then this diffeomorphism can be chosen 
such that $v$ is tangent to the fibres of the projection 
$W \to D^{n-2} \times S^1$.
\end{Proposition}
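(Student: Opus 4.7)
The plan is to combine the slice theorem for the free $S^1$-action on $\del M$ with the flow of an $S^1$-invariant vector field $w$ satisfying $df(w)\equiv 1$. Because the circle action on $U_M$ is free, its restriction to $\del M$ is free, so by Koszul's slice theorem (as invoked in Remark~\ref{decomposition}) there exist an $S^1$-invariant open neighbourhood $O$ of the orbit $S^1 \cdot x$ in $\del M$ and an $S^1$-equivariant diffeomorphism $\varphi \colon D^{n-2} \times S^1 \to O$, where $S^1$ acts on the middle factor by multiplication. This handles the boundary direction; what remains is to extend this picture transversally so that $f$ plays the role of the last coordinate.

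Next I would construct $w$. For the main statement, fix any Riemannian metric on an invariant neighbourhood of $\del M$ in $U_M$ and average it over $S^1$ to get an invariant metric $g$; then set
$$ w := \nabla^g f \, / \, \|\nabla^g f\|_g^2 $$
on the invariant neighbourhood of $\del M$ on which $f$ is invariant. Since $f$ and $g$ are $S^1$-invariant, so is $w$, and by construction $df(w) \equiv 1$. The vector field $w$ is transverse to $\del M$ because $df|_{\del M}$ is nowhere zero. After possibly shrinking $D^{n-2}$ and choosing $\eps>0$ small, the flow $\Phi^s_w$ is defined on $O$ for $s \in [0,\eps)$, and the map
$$ \Psi \colon D^{n-2} \times S^1 \times [0,\eps) \to U_M, \qquad (\xi,b,s) \mapsto \Phi^s_w(\varphi(\xi,b)), $$
is smooth and $S^1$-equivariant. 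Since $\varphi$ parametrises an open subset of $\del M$ and $w$ is transverse to $\del M$, the differential of $\Psi$ is bijective along $\{s=0\}$; by the inverse function theorem, combined with $S^1$-equivariance and compactness of the orbit $S^1\cdot x$ (which reduces an open-set argument along the orbit to one at a single point), $\Psi$ restricts to a diffeomorphism onto an open $S^1$-invariant neighbourhood $W \subseteq U_M$ of $x$. Finally, $f(\Psi(\xi,b,s)) = s$: this is $0$ at $s=0$, and $\tfrac{d}{ds} f(\Phi^s_w(y)) = df(w) = 1$, so $f$ corresponds to the projection onto the last factor.

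For the ``moreover'' clause, given an $S^1$-invariant vector field $v$ transverse to $\del M$, I would instead set $w := v/df(v)$. Because $df|_{\del M}$ is nowhere zero and $v$ is transverse to $\del M$, we have $df(v)\neq 0$ on $\del M$, hence on a neighbourhood of $\del M$, so $w$ is smooth and $S^1$-invariant there, with $df(w)=1$. Running the same construction with this $w$, the vector field $\partial/\partial s$ on the product model corresponds to $w$ under $\Psi$, and consequently $v = (df(v)) \cdot w$ is a smooth scalar multiple of $\partial/\partial s$ throughout $W$; in particular, $v$ is tangent to the fibres of the projection $W \to D^{n-2}\times S^1$, as required.

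The main obstacle is verifying that $\Psi$ is actually a diffeomorphism onto an open $S^1$-invariant neighbourhood of $x$, rather than merely a local diffeomorphism near individual points of the orbit $S^1\cdot x$. This is where the compactness of the orbit together with the $S^1$-equivariance of $\Psi$ does the work: one first gets a bijective differential and a neighbourhood of a single point, and then translates along $S^1$ to cover the whole orbit, after which one shrinks $D^{n-2}$ and $\eps$ uniformly to obtain injectivity. The remaining verifications — equivariance, $f$ corresponding to the last coordinate, and the direction of $v$ — are immediate from the choice of $\varphi$ and $w$.
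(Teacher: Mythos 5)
Your proof is correct and proceeds along essentially the same conceptual lines as the paper's — normalize an invariant transverse vector field so that $df$ of it is $1$, then flow from a local model of the free $S^1$-action on $\del M$ — but it assembles the pieces in the opposite order and with different ingredients. The paper first constructs an equivariant collar for the whole boundary (taking an invariant transverse $v$, rescaling so $vf\equiv 1$, and using its flow to identify a neighbourhood of $\del M$ in $M$ with a neighbourhood of $\del M\times\{0\}$ in $\del M\times[0,\infty)$), and only afterwards produces the local slice in $\del M$ by hand, using a chart plus ``sweeping'' by the circle action. You instead localize first, invoking Koszul's slice theorem to get the equivariant diffeomorphism $D^{n-2}\times S^1\to O\subset\del M$, and then flow out of that slice. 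You also construct the normalized vector field differently for the two clauses: by taking the gradient of $f$ with respect to an averaged invariant metric for the main statement, and by rescaling $v$ to $v/df(v)$ for the ``moreover'' clause; the paper treats both cases uniformly by rescaling a given or freshly constructed $v$. Both routes buy the same conclusion. Citing Koszul makes the slice step shorter, at the cost of depending on a theorem the paper prefers to keep elementary at this point; the paper's global-collar-first ordering has the small advantage that the choice of $\eps$ is visibly controlled by the compact segment $\{x\}\times[0,\eps]$, whereas in your account the uniform shrinking of $D^{n-2}$ and $\eps$ that makes $\Psi$ injective is deferred to the final paragraph and only sketched (compactness of $\overline{D'}\times S^1$ plus the standard tube argument). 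That sketch is correct, so there is no gap, only a difference in emphasis.
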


Proposition~\ref{prop:local} is a minor variation
of the collar neighbourhood theorem. 
For completeness, we include a proof.

\begin{proof}[Proof of Proposition~\ref{prop:local}]
Let $v$ be a vector field near $\del M$ that is transverse to $\del M$
and that is $S^1$-invariant near $\del M$.
(We can obtain such a $v$ by using local coordinates charts near $\del M$
to obtain inward-pointing vector fields,  
patching these vector fields with a partition of unity,
and averaging with respect to the $S^1$ action.)
After multiplying $v$ by a non-vanishing smooth function,
we may assume that the derivative $vf$ of $f$ along~$v$ is equal to one.
The forward-flow of the vector field $v$
determines a diffeomorphism from an open neighbourhood $U$
of $\del M \times \{ 0 \}$ in $\del M \times [0,\infty)$ 
to an open neighbourhood of $\del M$ in $M$
that intertwines the $S^1$ action on the $\del M$ factor 
with the $S^1$ action on $M$
and whose composition with $f$ is the projection to the $[0,\infty)$ factor.

Let $x \in \del M$.
Let $\eps > 0$ be such that $\{ x \} \times [0,\eps]$ is contained in~$U$.
Using a chart for $\del M$ near $x$,
we obtain an open disc $D^{n-2}$ about the origin in $\R^{n-2}$
and an embedding of $D^{n-2}$ into $\del M$
that takes the origin to $x$ and is transverse to the $S^1$ orbit through $x$.
``Sweeping'' by the circle action, and possibly shrinking $D^{n-2}$,
we obtain an open neighbourhood $W'$ of $S^1 \cdot x$ in $\del M$
such that $W' \times [0,\eps]$ is still contained in $U$
and a diffeomorphism from $W'$ to $D^{n-2} \times S^1$
that intertwines the $S^1$ action on $W'$ with the rotations 
of the $S^1$ factor.
To finish, take $W$ to be the image of $W' \times [0,\eps)$ in $M$,
and compose the inverse of the diffeomorphism $W' \times [0,\eps) \to W$
with the diffeomorphism $W' \to D^{n-2} \times S^1$.
\end{proof}

\section{The cut space as a topological space}
\labell{sec:topology}

Let $M$ be an $n$ dimensional manifold-with-boundary,
equipped with a circle action on a neighbourhood $U_M$ of the boundary.
Let $\Mcut = M/{\sim}$ be obtained from $M$ by the cutting construction,
and let $c \colon M \to \Mcut$ be the quotient map.
Equip $\Mcut$ with the quotient topology.

Let $W$ be an open subset of $M$; 
then $W$ is a manifold-with-boundary,
and its boundary is given by $\del W = \del M \cap W$.
We say that such an $W$ is 
$\mathbf{S^1}$\textbf{-invariant near its boundary} 
if there exists an $S^1$-invariant open neighbourhood of $\del M$ in $U_M$ 
whose intersection with $W$ is $S^1$-invariant.
For example, open subsets of $\intM$
and $S^1$-invariant open subsets of $U_M$
are $S^1$-invariant near their boundary
(for subsets of $\intM$, this condition is vacuous).

\begin{Lemma} \labell{invt near bdry}
For any open subset $W$ of $M$
that is $S^1$-invariant near its boundary,
$\Wcut$ is an open subset of $\Mcut$,
and the quotient topology of $\Wcut$ that is induced from $W$ 
agrees with the subset topology on $\Wcut$ that is induced from $\Mcut$.
\end{Lemma}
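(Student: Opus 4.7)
The plan is to reduce both assertions to a single observation: under the hypothesis, the set $W$ is already $\sim$-saturated in $M$, so the fibres of $c|_W$ coincide with the $\sim$-classes of points in $W$. Concretely, pick an $S^1$-invariant open neighbourhood $\Omega$ of $\del M$ in $U_M$ such that $\Omega \cap W$ is $S^1$-invariant. Since $W \cap \del M \subseteq \Omega \cap W$ and $\Omega \cap W$ is $S^1$-invariant, the $S^1$-orbit through any point of $W \cap \del M$ stays inside $\Omega \cap W$, hence inside $W$; equivalently, $c^{-1}(c(W)) = W$.

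Openness of $\Wcut$ in $\Mcut$ is then immediate from the definition of the quotient topology on $\Mcut$, since $c^{-1}(\Wcut) = c^{-1}(c(W)) = W$ is open in $M$.

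To compare the two topologies on $\Wcut$, I verify mutual inclusions of their families of open sets. In one direction: if $V$ is open in $\Mcut$, then $(c|_W)^{-1}(V \cap \Wcut) = c^{-1}(V) \cap W$ is open in $W$, so $V \cap \Wcut$ is open in the quotient topology of $\Wcut$. In the other direction, suppose $U \subseteq \Wcut$ is open in the quotient topology; then $A := (c|_W)^{-1}(U)$ is open in $W$, hence open in $M$. The key step is to upgrade $A$ from being $\sim_W$-saturated in $W$ to being $\sim$-saturated in all of $M$: if $x \in A$ and $x \sim x'$ with $x' \neq x$, then $x \in W \cap \del M \subseteq \Omega \cap W$, so $x' = a \cdot x$ lies in $\Omega \cap W \subseteq W$, and since $c(x') = c(x) \in U$ we conclude $x' \in A$. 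Consequently $V := c(A)$ satisfies $c^{-1}(V) = A$, so $V$ is open in $\Mcut$, and the identity $V \cap \Wcut = U$ follows from $A \subseteq W$ together with surjectivity of $c|_W$ onto $U$.

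The main obstacle is precisely this saturation step, and it is the point at which the hypothesis that $W$ is $S^1$-invariant near its boundary is indispensable: without it, orbits emanating from $W \cap \del M$ could leave $W$, preventing both the openness of $\Wcut$ in $\Mcut$ and the agreement of the two topologies.
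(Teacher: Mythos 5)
Your proof is correct, but it takes a genuinely different route from the paper. The paper first establishes, in Lemma~\ref{descends}, that the cutting construction is functorial with respect to equivariant transverse maps and takes one-to-one (resp.\ open) maps to one-to-one (resp.\ open) maps; the proof of Lemma~\ref{invt near bdry} then simply observes that the inclusion $W \hookrightarrow M$ is an equivariant transverse map that is one-to-one, continuous, and open, so by functoriality the induced map $\Wcut \to \Mcut$ is a one-to-one, continuous, open map, hence a homeomorphism onto an open subset. Your argument instead works directly with the point-set topology: you isolate the key fact that the hypothesis makes $W$ a $\sim$-saturated open subset of $M$, so that $c^{-1}(\Wcut)=W$, from which openness is immediate, and you then verify the two-sided comparison of topologies by propagating saturation to preimages under $c|_W$. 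This is a perfectly valid, self-contained alternative: it is more elementary and does not require having set up the functoriality machinery, though it re-proves by hand exactly the saturation and quotient-topology bookkeeping that Lemma~\ref{descends} packages once and for all. The paper's route has the advantage that the same Lemma~\ref{descends} is reused repeatedly elsewhere; your route has the advantage of making the role of the saturation hypothesis maximally explicit and of being readable without reference to the functoriality section.
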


\begin{proof}
Because the open subset $W$ of $M$
is $S^1$-invariant near its boundary,
the inclusion map of $W$ into~$M$ is an equivariant transverse map.
This map is one-to-one, continuous, and open.
By Lemma~\ref{descends}, the inclusion map of $\Wcut$ into $\Mcut$
is one-to-one, continuous, and open,
so it is a homeomorphism with an open subset of $\Mcut$.
\end{proof}


\begin{Lemma} \labell{Mcut Hausdorff}
$\Mcut$ is Hausdorff.
\end{Lemma}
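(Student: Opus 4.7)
The plan is to split into cases using the decomposition
$\Mcut = \intM_\cut \sqcup \Mred$
of Remark~\ref{decomposition}, and in each case reduce Hausdorff separation
to either the Hausdorff property of $\intM$ or the Hausdorff property
of a local model $\Wcut \cong D^{n-2} \times D^2$
provided by Proposition~\ref{prop:local} and Lemma~\ref{model}.
Fix distinct points $p, q \in \Mcut$; we will produce disjoint open
neighbourhoods.

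If both $p$ and $q$ lie in $\intM_\cut$, I would use that $c|_{\intM}$ is
a bijection onto $\intM_\cut$ and that $\intM_\cut$ is open in $\Mcut$
carrying the quotient topology from $\intM$ (Lemma~\ref{invt near bdry}),
so two disjoint open neighbourhoods in the Hausdorff manifold $\intM$
push forward to disjoint open neighbourhoods of $p$ and $q$ in $\Mcut$.
If instead $p \in \Mred$ and $q \in \intM_\cut$, pick preimages
$y \in \del M$ and $x \in \intM$. Apply Proposition~\ref{prop:local} to get an
$S^1$-invariant open neighbourhood $W$ of $y$ in $M$ diffeomorphic to
$D^{n-2} \times S^1 \times [0,\eps)$; by Lemma~\ref{model} its cut space
$\Wcut$ is homeomorphic to $D^{n-2} \times D^2$, and by
Lemma~\ref{invt near bdry} $\Wcut$ is open in $\Mcut$. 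If $x \in W$, both
points lie in the Hausdorff subspace $\Wcut$ and can be separated there.
If $x \notin W$, shrink to a smaller open $W' \subset W$ of the same local
form whose closure in $M$ misses $x$; then $\Wcut'$ and
$c(\intM \ssminus \ol{W'})$ are disjoint open sets in $\Mcut$ containing
$p$ and $q$ respectively (disjointness reduces to $W' \cap (\intM \ssminus \ol{W'}) = \emptyset$
since both preimages sit in $\intM$ where $c$ is injective).

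The remaining case is $p, q \in \Mred$, which I expect to be the main obstacle.
Writing $p = c(y)$ and $q = c(y')$, the $S^1$-orbits $O := S^1\!\cdot y$ and
$O' := S^1\!\cdot y'$ are disjoint compact subsets of the Hausdorff manifold
$\del M$. First I would separate them by disjoint open neighbourhoods in
$\del M$ and then pass to $S^1$-invariant ones using
Lemma~\ref{quotient map}\eqref{intersection}, obtaining disjoint
$S^1$-invariant open neighbourhoods $A \supset O$ and $A' \supset O'$ in
$\del M$. Next, using an $S^1$-invariant inward-pointing vector field near
$\del M$ exactly as in the proof of Proposition~\ref{prop:local}, I would
flow to get an $S^1$-equivariant collar diffeomorphism from a neighbourhood
of $\del M \times \{0\}$ in $\del M \times [0,\infty)$ into $M$; the
collar-images of $A \times [0,\eps)$ and $A' \times [0,\eps)$ give disjoint
$S^1$-invariant open neighbourhoods $W$ and $W'$ of $O$ and $O'$ in $M$
that are $S^1$-invariant near their boundaries. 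Then Lemma~\ref{invt near bdry}
makes $\Wcut$ and $\Wcut'$ into disjoint open neighbourhoods of $p$ and $q$
in $\Mcut$.

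The delicate point is exactly this third case: applying the local model
Proposition~\ref{prop:local} independently at $y$ and at $y'$ could a priori
produce $S^1$-invariant tubes that overlap along the collar, so the
separation must be organized first on $\del M$ and then carried into $M$ by
one common equivariant collar. Everything else is routine bookkeeping with
$S^1$-invariant sets and the fact that $c$ is a bijection on $\intM$.
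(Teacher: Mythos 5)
Your proposal is correct, but it takes a genuinely different and somewhat heavier route than the paper in the two cases involving $\Mred$. The paper's proof handles both points simultaneously in $M$ itself: since each $y_i$ is a compact subset of the Hausdorff manifold $M$ (a singleton in $\intM$ or a circle orbit in $\del M$, by Lemma~\ref{quotient map}\eqref{orbits-compact-closed}), one first separates them by disjoint open sets $V_1, V_2 \subset M$, and then, for any $y_i$ lying on $\del M$, replaces $V_i$ by the $S^1$-invariant open set $\bigcap_{a\in S^1} a\cdot(U_M\cap V_i)$, which is still contained in $V_i$ (so disjointness is preserved for free) and is open by Lemma~\ref{quotient map}\eqref{intersection}. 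Then Lemma~\ref{invt near bdry} finishes exactly as you do. Your approach instead separates the orbits on $\del M$ first and then imports the separation into $M$ via a single equivariant collar. This avoids the concern you flag about overlapping tubes coming from independently applying Proposition~\ref{prop:local} at two points, but that concern never arises in the paper's argument because it never invokes the local model at all in this lemma; the intersection trick of Lemma~\ref{quotient map}\eqref{intersection} already delivers invariant open separating sets inside arbitrary separating sets. The net effect is that your proof requires the equivariant collar machinery (essentially re-running part of the proof of Proposition~\ref{prop:local}), while the paper's is purely point-set together with one appeal to the invariant-intersection lemma; both are valid, but the paper's is shorter and more uniform across the three cases. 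Also, your mixed case introduces an unnecessary sub-split on whether $x\in W$; this disappears entirely if one separates directly in $M$ as the paper does.
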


\begin{proof}
Let $y_1$ and $y_2$ be distinct points of $\Mcut$.

\begin{itemize}
\item
Suppose that $y_1 = \{ x_1 \}$ and $y_2 = \{ x_2 \}$
for distinct points $x_1$ and $x_2$ of $\intM$.
Because $M$ is Hausdorff, there exist disjoint open subsets 
$V_1$ and $V_2$ of $M$ such that $x_1 \in V_1$ and $x_2 \in V_2$.
Let $V_1' := \intM \cap V_1$ and $V_2' := \intM \cap V_2$.

\item
Suppose that $y_1 = S^1 \cdot x_1$ and $y_2 = S^1 \cdot x_2$
are distinct orbits in $\del M$.
Then $y_1$ and $y_2$ are disjoint compact subsets of $M$
(see Lemma~\ref{quotient map}\eqref{orbits-compact-closed}).
Because $M$ is Hausdorff, there exist disjoint open subsets $V_1$
and $V_2$ of $M$ such that $y_1 \subset V_1$ and $y_2 \subset V_2$.
Let $V_1' := \bigcap\limits_{a \in S^1} a \cdot (U_M \cap V_1)$
and $V_2' := \bigcap\limits_{a \in S^1} a \cdot (U_M \cap V_2)$;
these subsets of $M$ are open 
(by Lemma~\ref{quotient map}\eqref{intersection}).

\item
Otherwise, after possibly switching $y_1$ and $y_2$,
we may assume that $y_1 = S^1 \cdot x_1$ for $x_1 \in \del M$
and $y_2 = \{ x_2 \}$ for $x_2 \in \intM$.
As before, $y_1$ and $y_2$ are disjoint compact subsets of $M$,
so there exist disjoint open subsets $V_1$
and $V_2$ of $M$ such that $y_1 \subset V_1$ and $y_2 \subset V_2$.
Let $V_1' := \bigcap\limits_{a \in S^1} a \cdot (U_M \cap V_1)$
and $V_2' := \intM \cap V_2$;
as before, these subsets of $M$ are open.
\end{itemize}

In each of these cases, $V_1'$ and $V_2'$
are open subsets of $M$ that are $S^1$-invariant
near their boundaries, and 
$y_1 \subset V_1' \subset V_1$ and $y_2 \subset V_2' \subset V_2$,
where $V_1$ and $V_2$ are disjoint.
By Lemma~\ref{invt near bdry}, $(V_1')_\cut$ and $(V_2')_\cut$
are then disjoint open neighbourhoods of $y_1$ and $y_2$ in $\Mcut$.
\end{proof}

\begin{Lemma} \labell{Mcut 2nd countable}
$\Mcut$ is second countable.
\end{Lemma}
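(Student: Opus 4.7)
My plan is to cover $\Mcut$ by countably many open subsets, each of which is known to be second countable, and then assemble countable bases.

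\textbf{Step 1 (two types of nice open sets).} First, $\intM$ is an open subset of $M$, vacuously $S^1$-invariant near its (empty) boundary, so by Lemma~\ref{invt near bdry} its image $\intM_\cut$ is open in $\Mcut$ and carries the quotient topology induced from $\intM$; since $c|_{\intM}$ is a bijection onto $\intM_\cut$, this piece is homeomorphic to $\intM$, which is second countable as an open subset of the second countable manifold-with-boundary~$M$. Second, for each $x \in \del M$, Proposition~\ref{prop:local} furnishes an $S^1$-invariant open neighbourhood $W_x \subset U_M$ diffeomorphic to $D^{n-2} \times S^1 \times [0,\eps_x)$. By Lemma~\ref{invt near bdry}, $(W_x)_\cut$ is open in $\Mcut$ with the quotient topology induced from $W_x$, and by Lemma~\ref{model} it is homeomorphic to $D^{n-2} \times D^2$, which is second countable.

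\textbf{Step 2 (extract a countable subcover).} The collection $\{\intM\} \cup \{ W_x : x \in \del M \}$ is an open cover of $M$. Since $M$ is second countable, it is Lindel\"of, so this cover admits a countable subcover $\{W_k\}_{k \in \N}$. Applying the quotient map and using Lemma~\ref{descends} (or directly the fact that $c$ is onto), the family $\{(W_k)_\cut\}_{k \in \N}$ is a countable cover of $\Mcut$ by open subsets, each of which is second countable by Step~1.

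\textbf{Step 3 (assemble a basis).} For each $k$, choose a countable basis $\calB_k$ for the subspace topology of $(W_k)_\cut$. Since $(W_k)_\cut$ is open in $\Mcut$, every element of $\calB_k$ is also open in $\Mcut$. Then $\calB := \bigcup_{k \in \N} \calB_k$ is a countable collection of open subsets of $\Mcut$. Given any open $U \subset \Mcut$ and any $y \in U$, there exists $k$ with $y \in (W_k)_\cut$, hence an element of $\calB_k$ lying between $\{y\}$ and $U \cap (W_k)_\cut \subset U$; thus $\calB$ is a basis.

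There is no real obstacle here: all the work has been done in Lemma~\ref{invt near bdry}, Proposition~\ref{prop:local}, and Lemma~\ref{model}, which reduce second countability of $\Mcut$ to second countability of the Euclidean local models and second countability (hence the Lindel\"of property) of $M$.
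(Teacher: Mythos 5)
Your proof is correct, but it takes a genuinely different route from the paper's. You cover $\Mcut$ by the countably many open sets $\intM_\cut$ and $(W_x)_\cut$, each known to be second countable via the local models of Proposition~\ref{prop:local} and Lemma~\ref{model}, and then invoke the standard fact that a Lindel\"of space with a cover by second countable open subsets is itself second countable. The paper instead pushes a countable basis $\frakU$ of $M$ directly through the quotient map, taking $\frakU' := \{(\intM \cap U)_\cut\}_{U \in \frakU} \cup \{(S^1 \cdot (U_M \cap U))_\cut\}_{U \in \frakU}$ and verifying by hand (case-splitting on whether $y$ lies in $\intM_\cut$ or $\Mred$, using Lemma~\ref{quotient map}\eqref{sweep-open} and \eqref{intersection}) that this is a basis. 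The trade-off: the paper's argument never touches the local models, so it establishes second countability with less machinery and keeps the point-set lemmas of Section~\ref{sec:topology} independent of Section~\ref{sec:local} except where local Euclidean-ness is genuinely needed (Lemma~\ref{loc Rn}); your argument is shorter and more conceptual, but front-loads the dependence on Proposition~\ref{prop:local} and Lemma~\ref{model}. Both are perfectly valid in the paper's logical ordering, since Section~\ref{sec:local} precedes Section~\ref{sec:topology}.
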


\begin{proof}
Let $\frakU$ be a countable basis for the topology of $M$.
We will show that 
$\frakU' := \{ \big( \intM \cap U \big)_\cut \}_{U \in \frakU}
 \cup \{ \big( S^1 \cdot (U_M \cap U)  \big)_\cut \}_{U \in \frakU}$ 
is a countable basis for the topology of $M_\cut$.

The countability of $\frakU'$ follows from that of $\frakU$.

For each $U \in \frakU$, the sets $\intM \cap U$ 
and $S^1 \cdot (U_M \cap U)$ are open in $M$ 
(see Lemma~\ref{quotient map}\eqref{sweep-open})
and are $S^1$-invariant near their boundaries.
By Lemma~\ref{invt near bdry}, these sets are open in $\Mcut$.

Let $y$ be a point in $\Mcut$ and $W$ an open neighbourhood of $y$
in $\Mcut$.
\begin{itemize}
\item
Suppose that $y = \{ x \}$ for $x \in \intM$.
Let $U \in \frakU$ be such that $x \in U \subset c^{-1}(W)$. 
Then $ U' := (\intM \cap U)_\cut$ is in $\frakU'$, 
and $y \in U' \subset W$.
\item
Suppose that $y = S^1 \cdot x$ for $x \in \del M$.
By Lemma~\ref{quotient map}\eqref{intersection},
$V := \bigcap\limits_{a \in S^1} a \cdot ( U_M \cap c^{-1}(W) )$
is open (in $U_M$, hence) in $M$.
Let $U \in \frakU$ be such that $x \in U \subset V$.
Then $U' := (S^1 \cdot ( U_M \cap U) )_\cut$ is in $\frakU'$,
and $y \in U' \subset W$.
\end{itemize}
In either case, we found an element $U'$ of $\frakU'$
such that $y \in U' \subset W$.
Because $y$ and $W$ were arbitrary, $\frakU'$ is a basis for the topology.
\end{proof}

\begin{Lemma} \labell{loc Rn}
Each point in $\Mcut$ has a neighbourhood
that is homeomorphic to an open subset of~$\R^n$.
\end{Lemma}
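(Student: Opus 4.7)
The plan is to reduce to the two cases already handled, namely interior points and boundary (reduced) points, and in each case exhibit an explicit Euclidean chart from the machinery built up earlier in the paper.

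First I would dispose of the easy case. If $y \in \intM_\cut$, then $y = \{x\}$ for some $x \in \intM$, and $\intM$ itself is an open subset of $M$ that is trivially $S^1$-invariant near its boundary (its boundary inside $M$ is empty). By Lemma~\ref{invt near bdry}, $\intM_\cut$ is open in $\Mcut$ and inherits the quotient topology from $\intM$; since $c|_{\intM}$ is a bijection onto $\intM_\cut$, this quotient topology is just the pushforward topology, which makes $c|_{\intM}$ a homeomorphism. Any Euclidean chart around $x$ in $\intM$ therefore gives a Euclidean neighbourhood of $y$ in $\Mcut$.

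The substantive case is $y \in \Mred$, i.e.\ $y = c(x)$ for some $x \in \del M$. Here I would choose an invariant boundary defining function $f$ (which exists by the remark following Construction~\ref{calF M}) and apply Proposition~\ref{prop:local} to the point $x$: this yields an $S^1$-invariant open neighbourhood $W$ of $x$ in $U_M$, a positive number $\eps>0$, and a diffeomorphism
$$ W \;\longrightarrow\; D^{n-2} \times S^1 \times [0,\eps) $$
intertwining the circle actions and carrying $f|_W$ to projection to the last factor. Because $W$ is $S^1$-invariant and open in $U_M$, it is in particular $S^1$-invariant near its boundary, so Lemma~\ref{invt near bdry} gives that $\Wcut$ is open in $\Mcut$ with the quotient topology induced from $W$. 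This diffeomorphism is an equivariant transverse map, so by Lemma~\ref{descends} (or Corollary~\ref{inverse}) it descends to a homeomorphism of $\Wcut$ with the cut space of $D^{n-2} \times S^1 \times [0,\eps)$.

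Finally, I would invoke Lemma~\ref{model}, which identifies the latter cut space homeomorphically with the open subset $D^{n-2} \times D^2$ of $\R^n$ via $(\xi,b,s) \mapsto (\xi, b\sqrt{s})$. Composing, one obtains a homeomorphism from an open neighbourhood $\Wcut$ of $y$ in $\Mcut$ onto an open subset of $\R^n$, as required. There is no real obstacle here: the proof is simply a concatenation of Proposition~\ref{prop:local}, Lemma~\ref{invt near bdry}, and Lemma~\ref{model}, together with the trivial observation for interior points; the work of constructing the correct local charts has already been done in those results.
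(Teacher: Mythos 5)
Your proof is correct and follows essentially the same line as the paper's: dispose of interior points via Lemma~\ref{invt near bdry}, and handle boundary points by combining Proposition~\ref{prop:local}, Lemma~\ref{descends}, Lemma~\ref{invt near bdry}, and Lemma~\ref{model}. The only cosmetic difference is that for interior points the paper applies Lemma~\ref{invt near bdry} directly to a chart domain intersected with $\intM$, whereas you first identify $\intM_\cut$ with $\intM$ and then chart; these are the same argument.
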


\begin{proof}
Take a point $y$ in $\Mcut$.

\begin{itemize}
\item
Suppose that $y = \{ x \}$ for $x \in \intM$. 
Let $W$ be the intersection of $\intM$
with the domain of a coordinate chart on $M$ that contains~$x$.
By Lemma~\ref{invt near bdry}, $W_\cut$ is an open neighbourhood
of $y$ in $\Mcut$ that is homeomorphic to an open subset of $\R^n$.

\item
Suppose that $y = S^1 \cdot x$ for $x \in \del M$.
Let 
$$ W \to D^{n-2} \times S^1 \times [0,\eps) $$
be a diffeomorphism as in Proposition~\ref{prop:local}.
By Lemma~\ref{descends}, 
this diffeomorphism descends to a homeomorphism 
$$ \Wcut \to (D^{n-2} \times S^1 \times [0,\eps))_\cut.$$
Lemma~\ref{model} gives a homeomorphism
$$ (D^{n-2} \times S^1 \times [0,\eps))_\cut
 \to D^{n-2} \times D^2(\sqrt{\eps}).$$
By Lemma~\ref{invt near bdry},
$\Wcut$ is an open neighbourhood of $y$ in $\Mcut$,
and its topology as a subset of $\Mcut$ 
agrees with its quotient topology that is induced from $W$.
Composing the above two homeomorphisms,
we conclude that this neighbourhood of $y$ in $\Wcut$
is homeomorphic to an open subset of $\R^n$.
\end{itemize}
\end{proof}


\section{Differential spaces}
\labell{sec:differential spaces}

A differential space is a set 
that is equipped with a collection of real valued functions,
of which we think as the smooth functions on the set,
that satisfies a couple of axioms.
Variants of this structure were used by many authors.
In particular, we would like to mention Bredon
\cite[Chap.~VI, \S 1]{bredon}.
Our definition follows Sikorski \cite{sikorski1,sikorski2}.
We find it convenient to phrase it in terms of $C^\infty$ rings,
of which we learned from Eugene Lerman.
For further developments, see \'Sniatycki's book \cite{sniatycki}.

A non-empty collection $\calF$ of real-valued functions
on a set $X$ is a $\mathbf{C^\infty}$\textbf{-ring} 
(with respect to the usual composition operations)
if for any positive integer $n$ and functions $h_1,\ldots,h_n$ in $\calF$,
and for any smooth real valued function $g$ on $\R^n$,
the composition $x \mapsto g( h_1(x) , \ldots , h_n(x) )$ is in $\calF$.

The \textbf{initial topology} determined by a collection $\calF$
of functions on a set $X$ is the smallest topology on $X$
for which the functions in $\calF$ are continuous.

If $\calF$ is a $C^\infty$-ring, 
then the preimages of open intervals by elements of $\calF$
are a basis (not only a sub-basis) for the initial topology.

\begin{Remark} \labell{given=initial}
Let $X$ be a topological space,
and let $\calF$ be a $C^\infty$-ring $\calF$ of real-valued functions on $X$.
Then the initial topology determined by $\calF$
coincides with the given topology on $X$
iff 
\begin{itemize}
\item
All the functions in $\calF$ are continuous; and 
\item
$X$ is $\mathbf{\calF}$\textbf{-regular},
in the following sense:
for each closed subset $C$ of $X$ and point $x \in X \ssminus C$,
there exists a function $f \in \calF$,
such that $f(x) \neq 0$ and such that $f$ vanishes on a neighbourhood of $C$. 
\end{itemize}

Indeed,
the initial topology is contained in the given topology
iff the functions in $\calF$ are continuous,
and the given topology is contained in the initial topology
iff $X$ is $\calF$-regular.
\eor
\end{Remark}

Let $\calF$ be a non-empty collection of real-valued functions on set $X$.
Equip $X$ with the initial topology determined by $\calF$.
The collection $\calF$ is a \textbf{differential structure}
if it is a $C^\infty$-ring
and it satisfies the following \textbf{locality condition}.
Given any function $h \colon X \to \R$,
if for each point in $X$ there exists a neighbourhood $V$
and a function $g$ in $\calF$ such that $h|_V = g|_V$,
then $h$ is in~$\calF$.

A \textbf{differential space} is a set equipped with a differential structure.
Given differential spaces $(X,\calF_X)$ and $(Y,\calF_Y)$,
a map $\psi \colon X \to Y$ is \textbf{smooth}
if for any function $h$ in $\calF_Y$
the composition $h \circ \psi$ is in $\calF_X$;
the map $\psi$ is a \textbf{diffeomorphism}
if it is a bijection and it and its inverse are both smooth.

On a subset $A$ of a differential space $(X,\calF)$,
the \textbf{subset differential structure}
consists of those functions $h \colon A \to \R$ such that,
for every point in $A$, there exist a neighbourhood $U$ in $X$
and a function $g$ in $\calF$ such that $h|_{U \cap A} = g|_{U \cap A}$.

A manifold $M$, equipped with the set of real valued functions
that are infinitely differentiable, is a differential space.
A map between manifolds is infinitely differentiable
if and only if it is smooth in the sense of differential spaces.
In this way, we identify manifolds with those differential spaces $X$
that are Hausdorff and second countable
and 
that are locally diffeomorphic to Cartesian spaces in the following sense:
for each point in $X$ there exist a neighbourhood $U$
and a diffeomorphism of $U$ with an open subset of a Cartesian space $\R^n$.
Manifolds with boundary, and manifolds with corners 
\cite[Chap.~16]{JohnLee:smooth}, 
are similarly identified with those differential spaces 
that are Hausdorff and second countable
and
that are locally diffeomorphic to half-spaces $\R^{n-1} \times \R_{\geq 0}$
or, respectively, to orthants $\R_{\geq 0}^n$.

\section{The cut space as a differential space}
\labell{sec:differential structure on Mcut}

Let $M$ be a manifold-with-boundary, equipped with a free circle action 
on a neighbourhood $U_M$ of the boundary.
Let $\Mcut$ be obtained from $M$ by the cutting construction,
and let $c \colon M \to \Mcut$ be the quotient map.
Let $\calF_M$ be the set of real valued functions on $M$
that is described in Construction~\ref{calF M}
and Lemma~\ref{FM indep of f}.

\begin{Lemma} \labell{Mcut:Cinfty ring}
$\calF_M$ is a $C^\infty$ ring of real-valued functions on $\Mcut$.
\end{Lemma}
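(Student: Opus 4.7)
The plan is to verify the two requirements for $\calF_M$ to be a $C^\infty$-ring: that $\calF_M$ is non-empty, and that it is closed under post-composition with smooth functions of several variables.

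First I would check non-emptiness by exhibiting the constant functions. If $h \colon \Mcut \to \R$ is constant with value $r$, then $\wh{h} := h \circ c$ is the constant $r$ on $M$, which is smooth on $\intM$, so Condition ($\calF 1$) holds; and setting $V := U_M$ and $H(x,z) := r$ gives a smooth, $S^1$-invariant function on $V \times \C$ satisfying $\wh{h}(x) = H(x, \sqrt{f(x)})$, so Condition ($\calF 2$) holds. More generally, a similar trivial extension shows $\calF_M$ contains every function that factors through a smooth function on a neighbourhood of $\intM_\cut$.

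The main step is closure under composition. Given $h_1, \ldots, h_n \in \calF_M$ and a smooth $g \colon \R^n \to \R$, I would set $h := g \circ (h_1, \ldots, h_n) \colon \Mcut \to \R$ and verify that $\wh{h} = h \circ c$ satisfies both conditions of Construction~\ref{calF M} with respect to any chosen invariant boundary defining function $f$. Condition ($\calF 1$) is immediate, since $\wh{h}|_{\intM} = g \circ (\wh{h_1}|_{\intM}, \ldots, \wh{h_n}|_{\intM})$ is a composition of smooth functions. For Condition ($\calF 2$), for each $i$ let $V_i$ and $H_i \colon V_i \times \C \to \R$ witness the condition for $h_i$; then I would take $V := V_1 \cap \cdots \cap V_n$, which is still an $S^1$-invariant open neighbourhood of $\del M$ in $U_M$, and define
\[
  H(x,z) \;:=\; g\bigl(H_1(x,z), \ldots, H_n(x,z)\bigr).
\]
This $H$ is smooth on $V \times \C$; equivariance follows termwise since each $H_i$ is equivariant; and
\[
  H\bigl(x, \sqrt{f(x)}\bigr) \;=\; g\bigl(\wh{h_1}(x), \ldots, \wh{h_n}(x)\bigr) \;=\; \wh{h}(x)
\]
for $x \in V$, giving Condition ($\calF 2$)(a,b) for $\wh h$.

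No step here looks like a serious obstacle: everything reduces to the fact that smooth functions of several variables are closed under composition, and that the intersection of finitely many $S^1$-invariant open neighbourhoods of $\del M$ is still such a neighbourhood. The only mild subtlety is that the choice of $f$ must be the same for all $h_i$; but by Lemma~\ref{FM indep of f} the collection $\calF_M$ does not depend on $f$, so one may fix one invariant boundary defining function once and for all and adjust the witnessing data $(V_i, H_i)$ accordingly.
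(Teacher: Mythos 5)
Your proof is correct and takes essentially the same approach as the paper: verify ($\calF 1$) via composition of smooth functions on $\intM$, and verify ($\calF 2$) by intersecting the neighbourhoods $V_i$ and setting $H := g(H_1,\ldots,H_n)$. The only difference is your explicit check of non-emptiness, which the paper leaves implicit.
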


\begin{proof}
Let $n$ be a positive integer, and let $h_1,\ldots,h_n$ be functions 
in $\calF_M$.
Let $g$ be a smooth real valued function on $\R^n$.
Define $h \colon \Mcut \to \R$ by $h(x) := g\big(h_1(x),\ldots,h_n(x)\big)$.
We need to prove that the function 
$\wh{h} := h \circ c \colon M \to \R$ satisfies 
Conditions ($\calF 1$) and ($\calF 2$) of Construction~\ref{calF M}.

Let $\wh{h}_i := h_i \circ c$ for $i=1,\ldots,n$.


\begin{itemize}
\item 
The functions $\wh{h}_1,\ldots,\wh{h}_n$ 
satisfy Condition~($\calF 1$) of Construction~\ref{calF M},
so they are smooth on $\intM$.
Because the function $\wh{h}$ can be written as the composition
$g( \wh{h}_1(\cdot) , \ldots , \wh{h}_n(\cdot) )$
of these functions with the smooth function $g \colon \R^n \to \R$,
it too is smooth on $\intM$.
So $\wh{h}$ satisfies Condition~($\calF 1$) of Construction~\ref{calF M}.

\item 
The functions $\wh{h}_1,\ldots,\wh{h}_n$
satisfy Condition~($\calF 2$) of Construction~\ref{calF M},
so for each $i \in \{1,\ldots,n\}$
there exists an $S^1$-invariant neighbourhood $V_i$ of $\del M$ in~$U_M$
and a smooth function $H_i \colon V_i \times \C \to \R$
such that $H_i(a \cdot x , z) = H_i (x, az)$ for all $a \in S^1$
and $(x,z) \in V \times \C$
and such that $\wh{h}_i(x) = H_i(x,\sqrt{f(x)})$ for all $x \in V_i$.
The intersection $V := V_1 \cap \ldots \cap V_n$
is an $S^1$-invariant open neighbourhood of $\del M$ in~$U_M$.
Define $H' \colon V \times \C \to \R$ by 
$H'(x',z) = g( H_1(x',z) , \ldots, H_n(x',z) )$.
Then $H'$ is smooth,
$H' (a \cdot x' , z) = H' (x', az)$ 
for all $a \in S^1$ and $(x',z) \in V \times \C$,
and $\wh{h}(x') = H' ( x' , \sqrt{f(x')} )$ for all $x' \in V$.
So $\wh{h}$ satisfies Condition~($\calF 2$) of Construction~\ref{calF M}.
\end{itemize}

\end{proof}

\begin{Lemma} \labell{Mcut:initial topology}
The initial topology on $\Mcut$ determined by $\calF_M$
coincides with the given topology of $\Mcut$
(which is the quotient topology induced from $M$).
\end{Lemma}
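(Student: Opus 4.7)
The plan is to invoke Remark~\ref{given=initial}, which reduces the claim to two conditions: (a) every function in $\calF_M$ is continuous for the quotient topology on $\Mcut$, and (b) $\Mcut$ is $\calF_M$-regular, in the sense that any point $y$ in the complement of a closed set $C \subset \Mcut$ admits a function in $\calF_M$ that is non-zero at $y$ and vanishes on a neighbourhood of~$C$.

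For (a), note that $h \colon \Mcut \to \R$ is continuous with respect to the quotient topology if and only if $\wh h := h \circ c$ is continuous on $M$.  On $\intM$, this follows from ($\calF 1$); on the neighbourhood $V$ of $\del M$ supplied by~($\calF 2$), continuity follows because $\wh h(x) = H(x, \sqrt{f(x)})$ is a composition of continuous maps.  Since $\intM \cup V = M$, $\wh h$ is continuous on $M$, so $h$ is continuous for the quotient topology.

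For (b), I will construct a compactly supported bump function $h \in \calF_M$ with $h(y) \neq 0$ and $\supp(h)$ disjoint from $C$; then $\Mcut \ssminus \supp(h)$ is the required open neighbourhood of~$C$ on which $h$ vanishes.  When $y \in \intM_\cut$, a standard Euclidean bump on a small chart in $\intM$ around the preimage of $y$, extended by zero, does the job: the support lies in $\intM$, so ($\calF 2$) is satisfied trivially by setting $H \equiv 0$ on $(U_M \ssminus \supp(h)) \times \C$.  When $y \in \Mred$, the idea is to combine Proposition~\ref{prop:local} with Lemma~\ref{model} to transport a product bump $\alpha(\xi)\beta(z)$ on $D^{n-2} \times D^2$---with $\alpha, \beta$ compactly supported smooth functions, $\alpha(0) = \beta(0) = 1$, and supports small enough that the resulting function is supported in $\Wcut$ and disjoint from $C$---to the local bump $\wh h_0(\xi, b, s) = \alpha(\xi)\beta(b\sqrt s)$ on the model chart $W$, and then extend by zero to~$M$.

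The main obstacle will be verifying condition~($\calF 2$) for the extension by zero.  The naively associated function $H(\xi,b,s,z) = \alpha(\xi)\beta(bz)$ on $W \times \C$ is not compactly supported in the $s$-direction, so an extension by zero to $(U_M \ssminus W) \times \C$ fails to be smooth.  I would fix this by inserting a cutoff $\sigma \colon [0,\eps) \to [0,1]$ with $\sigma(0)=1$ and $\supp(\sigma) \subset [0, \eps/2]$ and redefining $H(\xi,b,s,z) := \alpha(\xi)\sigma(s)\beta(bz)$.  Then the $W$-projection of $\supp(H)$ is contained in a compact $S^1$-invariant subset $K \subset W$, and taking $V := W \cup (U_M \ssminus K)$---an $S^1$-invariant open neighbourhood of $\del M$ in $U_M$---the function $H$ extends smoothly by zero to $V \times \C$; consistency on the overlap $(W \ssminus K) \times \C$ holds because at every such point either $\sigma(s) = 0$ or $\alpha(\xi) = 0$.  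The resulting $H$ is equivariant and satisfies $H(x,\sqrt{f(x)}) = \wh h(x)$ for $x \in V$, while condition~($\calF 1$) is immediate from smoothness of the local formula on $\intM \cap W$ and vanishing elsewhere.
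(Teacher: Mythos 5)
Your reduction via Remark~\ref{given=initial} to (a) continuity plus (b) $\calF_M$-regularity, and your proof of (a), match the paper's; the interior-point case of (b) also matches. But for a boundary point $y \in \Mred$ your route is genuinely different. The paper does not pass to local coordinates: it takes an arbitrary smooth non-negative bump $\wh{h}_1$ supported in an invariant open set $U_1 \subset c^{-1}(\calO)$ around the orbit $S^1\cdot x$, replaces it on $U_M$ by its $S^1$-average (extended by zero off $U_M$), and then, because the resulting $\wh{h}$ is $S^1$-invariant on $U_M$, condition ($\calF 2$) is satisfied \emph{trivially} with $H(x,z) := \wh{h}(x)$ --- a function that does not depend on $z$ at all; the equivariance condition ($\calF 2$)(a) becomes nothing but the $S^1$-invariance of $\wh{h}$. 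You instead build a non-invariant bump $\alpha(\xi)\beta(b\sqrt{s})$ in the local model of Proposition~\ref{prop:local} and Lemma~\ref{model}, so ($\calF 2$) then requires the explicit $H(\xi,b,s,z) = \alpha(\xi)\sigma(s)\beta(bz)$ and the extension-by-zero bookkeeping that occupies most of your argument. Both routes work, but the averaging trick renders the ($\calF 2$) step a one-liner and avoids the model chart entirely.

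One point in your version should be tightened: requiring only $\sigma(0)=1$ is not enough. For $H\big(x,\sqrt{f(x)}\,\big) = \wh{h}(x)$ to hold you need $\sigma(s) \equiv 1$ for every $s$ such that the circle $\{\,|z|=\sqrt s\,\}$ meets $\supp\beta$; that is, $\sigma\equiv 1$ on $[0,\delta^2]$ where $\delta$ bounds the radius of $\supp\beta$. Otherwise $\sigma(s)\beta(b\sqrt s) \neq \beta(b\sqrt s)$ for small $s>0$ where $\beta(b\sqrt s)\neq 0$, and ($\calF 2$)(b) fails. This is easily arranged by choosing $\beta$ and $\sigma$ together, but it does need to be said, and it makes clear why the supports of $\alpha$, $\beta$, $\sigma$ must all be coordinated.
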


\begin{proof}
By Remark~\ref{given=initial}, we need to show two things:
\begin{enumerate}
\item[(i)]
\emph{Every function in $\calF_M$ is continuous
(with respect to the quotient topology)}.
\item[(ii)]
\emph{$\Mcut$ (with the quotient topology) is $\calF_M$-regular.
}
\end{enumerate}

\noindent\emph{Proof of (i):} \ 
Let $h$ be a function in $\calF_M$,
and let $\wh{h} := h \circ c \colon M \to \R$.
Because $\wh{h}$ satisfies
Condition~($\calF 1$) of Construction~\ref{calF M},
it is continuous on $\intM$.
Because $\wh{h}$ satisfies 
Condition~($\calF 2$) of Construction~\ref{calF M},
it is continuous on an open neighbourhood $V$ of $\del M$ in $U_M$.
Because $\intM$ and $V$ are open in $M$ and their union is $M$,
the function $\wh{h} \colon M \to \R$ is continuous.
Chasing the commuting diagram
$$ \xymatrix{
 M \ar[rr]^{c} \ar@/_1pc/_{ \wh{h} }[rrrr] 
 && \Mcut \ar[rr]^{h} && \R \, ,
} $$
we conclude that $h \colon \Mcut \to \R$ is continuous
with respect to the quotient topology.

\noindent\emph{Proof of (ii):} \  
We need to show that for every point $y$ in $\Mcut$
and neighbourhood $\calO$ of $y$
there exists a function in $\calF_M$ 
that is non-zero on $y$
and whose support in $\Mcut$ is contained in~$\calO$.

Let $y$ be a point in $\Mcut$ 
and $\calO$ an open neighbourhood of $y$ in $\Mcut$
(with respect to the quotient topology).
Also consider $y$ as a subset of $M$ that is contained in $c^{-1}(\calO)$.

\begin{itemize}
\item
Suppose that $y = \{ x \}$ for $x \in \intM$.
Let $\wh{h} \colon M \to \R$ be a non-negative smooth function 
that is positive on $x$
and that vanishes outside a closed subset $A$ of $M$
that is contained in $c^{-1}(\calO) \cap \intM$.

\item
Suppose that $y = S^1 \cdot x$ for $x \in \del M$.
Then $U_1 := \bigcap\limits_{a \in S^1} 
                    a \cdot \left( c^{-1}(\calO) \cap U_M \right)$
is an $S^1$-invariant open neighbourhood of $y$ in~$U_M$
(by Lemma~\ref{quotient map}\eqref{intersection})
that is contained in $c^{-1}(\calO)$.
Let $\wh{h}_1 \colon M \to \R$ be a non-negative smooth function
that is positive along $S^1 \cdot x$
and vanishes outside a closed subset $A$ of $M$
that is contained in~$U_1$. 
Let $\wh{h} \colon M \to \R$ be the function that vanishes outside~$U_M$
and whose restriction to~$U_M$ 
is the $S^1$-average of the function~$\wh{h}_1$.
\end{itemize}

In either case, the function $\wh{h} \colon M \to \R$ 
has the following properties.
\begin{itemize}

\item[(a)]
$\wh{h}$ is smooth on $M$,
and it is $S^1$-invariant 
on some $S^1$-invariant open subset $V$ of $U_M$
that contains $\del M$.
(In the first case we can take $V = \bigcap\limits_{a \in S^1}
 a \cdot (U_M \ssminus A)$,
which is open by Lemma~\ref{quotient map}\eqref{intersection}.
In the second case we can take $V = U_M$.)

\item[(b)]
There exists a closed subset $A_1$ of $M$ 
that contains the carrier $\{ \wh{h} \neq 0 \}$ 
and is contained in $c^{-1}(\calO)$
and whose complement is $S^1$-invariant along its boundary.
(In the first case, we can take $A_1=A$.
In the second case, we can take $A_1 = S^1 \cdot A$,
which is closed by Lemma~\ref{quotient map}\eqref{action is closed}.)

\item[(c)]
$\wh{h}$ is positive along $y$.
\end{itemize}

By (a), the function $\wh{h}$ is constant
on the level sets of $c \colon M \to \Mcut$, 
so it determines a function $h \colon \Mcut \to \R$ 
such that ${\wh{h} = h \circ c}$.
Also by (a), the function $\wh{h}$ 
satisfies Conditions ($\calF 1$) and ($\calF 2$) of Construction~\ref{calF M}
(with $H(x,z):=\wh{h}(x))$, so the function $h$ is in $\calF_M$.
By (b) and (c), the function $h$ is positive on $y$
and its support in $\Mcut$ is contained in $\calO$.

\end{proof}

\begin{Lemma} \labell{Mcut:locality}
$\calF_M$ satisfies the locality condition of a differential structure.
\end{Lemma}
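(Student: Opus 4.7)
The plan is to verify that any $h \colon \Mcut \to \R$ satisfying the hypothesis of the locality condition fulfils Conditions ($\calF 1$) and ($\calF 2$) of Construction~\ref{calF M}. Fix an invariant boundary defining function $f$ and write $\wh{h} := h \circ c$. Condition ($\calF 1$) is immediate: given $x \in \intM$, the hypothesis yields a neighbourhood $\calO \subset \Mcut$ of $\{x\}$ and some $g \in \calF_M$ with $h|_{\calO} = g|_{\calO}$, so $\wh h$ agrees with $\wh g$ on the open subset $c^{-1}(\calO) \cap \intM$ of $\intM$. Since $\wh g|_{\intM}$ is smooth by ($\calF 1$) for $g$, so is $\wh h|_{\intM}$ near $x$; as $x$ was arbitrary, $\wh h|_{\intM}$ is smooth.

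The substantive task is ($\calF 2$), where we must produce a \emph{single} $S^1$-invariant neighbourhood $V$ of $\del M$ in $U_M$ and a smooth, equivariant $H \colon V \times \C \to \R$ witnessing $\wh h(x) = H(x,\sqrt{f(x)})$. The strategy is to glue local representatives through an $S^1$-invariant partition of unity near $\del M$. For each orbit $y \subset \del M$, pick $\calO_y \subset \Mcut$ open with $y \in \calO_y$, and $g_y \in \calF_M$ with $h|_{\calO_y} = g_y|_{\calO_y}$. Applying ($\calF 2$) to $g_y$ gives an $S^1$-invariant open neighbourhood $V_y$ of $\del M$ in $U_M$ and a smooth equivariant $H_y \colon V_y \times \C \to \R$ with $\wh g_y(x) = H_y(x,\sqrt{f(x)})$ on $V_y$. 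The set $V_y \cap c^{-1}(\calO_y)$ is an open neighbourhood of the compact orbit $y$; by Lemma~\ref{quotient map}\eqref{intersection} it contains an $S^1$-invariant open neighbourhood $W_y$ of $y$.

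The family $\{W_y\}_{y \in \Mred}$ together with $\intM$ covers $M$, and by second countability and paracompactness of $M$ one extracts a countable locally finite $S^1$-invariant refinement $\{W_\alpha\}$ covering an open neighbourhood of $\del M$, together with a subordinate smooth partition of unity that is $S^1$-invariant (averaged, using freeness/properness of the $S^1$-action on $U_M$). Writing $H_\alpha$ for the $H_y$ attached to the chosen $W_y \supset W_\alpha$, let $V := \bigcup_\alpha W_\alpha$ and define
\[
 H(x,z) \,:=\, \sum_\alpha \rho_\alpha(x)\, H_\alpha(x,z), \qquad (x,z) \in V \times \C.
\]
Local finiteness gives smoothness of $H$; $S^1$-invariance of each $\rho_\alpha$ together with the equivariance of each $H_\alpha$ yields $H(a \cdot x, z) = H(x,az)$; and whenever $\rho_\alpha(x) \neq 0$ we have $x \in W_\alpha \subset c^{-1}(\calO_\alpha) \cap V_\alpha$, so $\wh h(x) = \wh g_\alpha(x) = H_\alpha(x,\sqrt{f(x)})$. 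Summing and using $\sum_\alpha \rho_\alpha(x) = 1$ on $V$ gives $H(x,\sqrt{f(x)}) = \wh h(x)$, verifying ($\calF 2$).

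The only real obstacle is producing a smooth $S^1$-invariant partition of unity subordinate to an $S^1$-invariant open cover near $\del M$; this is routine given freeness of the action, after which the gluing is formal.
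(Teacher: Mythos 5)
Your proof is correct and takes essentially the same route as the paper: on $\intM$ the verification of ($\calF 1$) is immediate, and ($\calF 2$) is obtained by gluing local representatives $H_\alpha$ with an $S^1$-invariant partition of unity near $\del M$ (which is exactly what Lemma~\ref{subordinate}\eqref{partition of unity} supplies), modulo slightly different bookkeeping in how the neighbourhood $V$ of $\del M$ is carved out. The one thing worth adding explicitly is the appeal to Lemma~\ref{Mcut:initial topology} at the start, so that ``neighbourhood'' in the locality condition, which a priori refers to the initial topology determined by $\calF_M$, can be taken to mean a neighbourhood in the quotient topology; you use this implicitly when treating $c^{-1}(\calO_y)$ as open in $M$.
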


\begin{proof}
By Lemma~\ref{Mcut:initial topology},
in the formulation of the locality condition,
we may take ``neighbourhood''
to be with respect to the quotient topology on $\Mcut$
that is induced from the topology on $M$. 

Take any function $h \colon \Mcut \to \R$.
Assume that for each point $x \in M$
there exists a neighbourhood $\calO_x$ of $c(x)$ in $\Mcut$ 
and a function $g_x \colon \Mcut \to \R$ that is in $\calF_M$ 
and such that $h$ and $g_x$ coincide on $\calO_x$.
We need to prove that the function $\wh{h} := h \circ c \colon M \to \R$
satisfies Conditions ($\calF 1$) and ($\calF 2$) of Construction~\ref{calF M}.

For each point $x \in M$, fix $\calO_x$ and $g_x$ as above, 
and let $\wh{g}_x := g_x \circ c$.

\begin{itemize}

\item
Let $x \in \intM$. 
Because the function $\wh{g}_x$ satisfies Condition~($\calF 1$) 
of Construction~\ref{calF M}, it is smooth on $\intM$.
Because the function $\wh{h}$ coincides with $\wh{g}_x$
on the set $\intM \cap c^{-1}(\calO_x)$ 
and this set is an open neighbourhood of $x$ in $\intM$,
the function $\hat{h}$ is smooth at $x$.
Because $x$ was an arbitrary point of $\intM$,
we conclude that $\hat{h}$ 
satisfies Condition~($\calF 1$) of Construction~\ref{calF M}.

\item
For each $x \in \del M$, because $\wh{g}_{x}$ satisfies
Condition ($\calF 2$) of Construction~\ref{calF M},
there exist an $S^1$-invariant open neighbourhood $V_x$ of $\del M$ in $U_M$
and a smooth function $G_x \colon V_x \times \C \to \R$,
such that $G_x( a \cdot x' , z) = G( x',az)$ for all $a \in S^1$
and $(x',z) \in V_x \times \C$,
and such that $\wh{g}_x (x') = G_x ( x' , \sqrt{f(x')} \,) $
for all $x' \in V_x$.  Fix such $V_x$ and $G_x$.
The set $\calO_x' := \bigcap\limits_{a \in S^1} 
a \cdot (V_x \cap c^{-1}(\calO_x))$
is an open neighbourhood of $S^1 \cdot x$ in $U_M$
(see Lemma~\ref{quotient map}\eqref{intersection})
on which $\wh{h}$ coincides with $\wh{g}_x$.

The sets $\calO_x'$, together with the set $U_M \cap \intM$,
are a covering of $U_M$ by $S^1$-invariant open sets.
Let $( \rho_i \colon U_M \to \R_{\geq 0} )_{i \geq 0}$ be 
an $S^1$-invariant partition of unity that is subordinate to this covering
(see Lemma~\ref{subordinate}\eqref{partition of unity}),
with $\supp \rho_0 \subset U_M \cap \intM$
and $\supp \rho_i \subset \calO_{x_i}'$ for $i\geq 1$.

The set $V:= U_M \ssminus \supp \rho_0$
is an $S^1$-invariant open neighbourhood of $\del M$ in $U_M$,
and $(\left.\rho_i\right|_V)_{i \geq 1}$ (without $i=0$)
is a partition of unity on $V$.
Because each $\hat{g}_{x_i}$ coincides with $\hat{h}$
on the carrier $\{ \rho_i \neq 0 \}$,
we have $\wh{h}|_{V} = \sum_{i \geq 1} \rho_i \left.\wh{g}_{x_i} \right|_{V}$.

Define $H \colon V \times \C \to \R$
by $H := \sum_{i \geq 1} \rho_i G_{x_i}|_{V \times \C}$.
Then $H$ is smooth, $H(a \cdot x' , z) = H(x', a z)$
for all $a \in S^1$ and $(x',z) \in V \times \C$,
and $\wh{h}(x') = H(x',\sqrt{f(x')})$ for all $x' \in V$.
So $\hat{h}$ satisfies Condition~($\calF 2$) of Construction~\ref{calF M}.
\end{itemize}
\end{proof}

\begin{Corollary} \labell{FM is differential structure}
In the setup of Construction~\ref{calF M},
the set $\calF_M$ is a differential structure on~$\Mcut$.
\end{Corollary}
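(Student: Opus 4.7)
The plan is to observe that Corollary~\ref{FM is differential structure} is a direct assembly of the three lemmas immediately preceding it, so there is essentially nothing new to prove; the work is in recognising that we have verified each clause of the definition of a differential structure from Section~\ref{sec:differential spaces}.

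First, I would recall the definition: a non-empty collection $\calF$ of real-valued functions on a set $X$ is a differential structure provided that (i) $\calF$ is a $C^\infty$-ring, and (ii) $\calF$ satisfies the locality condition, where $X$ is implicitly equipped with the initial topology determined by $\calF$. Here $\Mcut$ already comes with a preferred topology --- the quotient topology from $M$ --- so before anything else I need to confirm that this preferred topology agrees with the initial topology, since the locality condition is formulated using neighbourhoods.

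Next I would cite the three inputs in order. Lemma~\ref{Mcut:Cinfty ring} gives clause~(i). Lemma~\ref{Mcut:initial topology} shows that the initial topology determined by $\calF_M$ agrees with the quotient topology on $\Mcut$, so the word ``neighbourhood'' is unambiguous and the subsequent locality argument is with respect to the intended topology. Lemma~\ref{Mcut:locality} then supplies clause~(ii). Since $\calF_M$ is also visibly non-empty (constants on $\Mcut$ pull back via $c$ to constants on $M$, which trivially satisfy ($\calF 1$) and ($\calF 2$), or alternatively this follows from the $C^\infty$-ring property applied to the $0$-ary constant operation), all conditions in the definition of a differential structure are met.

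The only ``obstacle'' is a notational one: making clear that the implicit initial-topology convention in the definition of a differential structure is harmlessly consistent with the quotient-topology convention used throughout Section~\ref{sec:topology}, which is exactly what Lemma~\ref{Mcut:initial topology} is there to guarantee. Once that point is flagged, the corollary is immediate from the conjunction of Lemmas~\ref{Mcut:Cinfty ring}, \ref{Mcut:initial topology}, and~\ref{Mcut:locality}, and the proof can be stated in one or two sentences.
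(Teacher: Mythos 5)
Your proof is correct and takes the same route as the paper: the corollary follows by citing Lemma~\ref{Mcut:Cinfty ring} for the $C^\infty$-ring property and Lemma~\ref{Mcut:locality} for the locality condition. The paper's own proof is just those two citations; your additional explicit mention of Lemma~\ref{Mcut:initial topology} (to justify reading ``neighbourhood'' with respect to the quotient topology) is a reasonable clarification, but in the paper that point is already absorbed into the opening of the proof of Lemma~\ref{Mcut:locality}, so it need not be repeated in the corollary.
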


\begin{proof}
By Lemma~\ref{Mcut:Cinfty ring}, $\calF_M$ is a $C^\infty$ ring.
By Lemma~\ref{Mcut:locality},
$\calF_M$ also satisfies the locality condition of a differential structure.
\end{proof}


\begin{Lemma} \labell{open subset}
Let $M$ be a manifold-with-boundary,
equipped with a free circle action on a neighbourhood $U_M$ of the boundary.
Let $W$ be an open subset of $M$
that is $S^1$-invariant near its boundary.
Then the inclusion of $W$ into $M$
descends to a diffeomorphism (as differential spaces)
of $\Wcut$ with an open subset of $\Mcut$.
\end{Lemma}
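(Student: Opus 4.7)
The plan is to combine the functoriality results already established with a bump function argument. First, the inclusion $\iota \colon W \hookrightarrow M$ is an equivariant transverse map, since an invariant boundary defining function on $M$ restricts to one on $W$. By Lemma \ref{descends} and Lemma \ref{invt near bdry}, $\iota_\cut \colon \Wcut \to \Mcut$ is a homeomorphism onto an open subset of $\Mcut$, and the subspace topology on the image coincides with the quotient topology on $\Wcut$. By Lemma \ref{descends smooth}, $\iota_\cut^* \calF_M \subset \calF_W$, so $\iota_\cut$ is smooth as a map of differential spaces.

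The remaining task is to show that $\iota_\cut$ is a diffeomorphism onto its image (equipped with the subset differential structure from $\Mcut$); equivalently, given $h \in \calF_W$ and $y \in \Wcut$, I must produce $g \in \calF_M$ and an open neighbourhood $\calO$ of $\iota_\cut(y)$ in $\Mcut$ with $g|_{\calO} = (h \circ \iota_\cut^{-1})|_{\calO}$. I would handle this via an $S^1$-equivariant bump function on $M$. Using $S^1$-invariant partitions of unity (as already employed in the proof of Lemma \ref{Mcut:initial topology}), construct a smooth function $\wh\chi \colon M \to [0,1]$ that is $S^1$-invariant on an $S^1$-invariant neighbourhood of $\del M$, is identically $1$ on an $S^1$-saturated open neighbourhood of $c_M^{-1}(\iota_\cut(y))$, and has closed support contained in $W$. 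Taking $H(x,z) := \wh\chi(x)$ as the witness for Condition~($\calF 2$), the descent $\chi \colon \Mcut \to \R$ lies in $\calF_M$.

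Next define $\wh g \colon M \to \R$ by $\wh g := \wh\chi \cdot (h \circ c_W)$ on $W$ and $\wh g := 0$ on $M \setminus \supp \wh\chi$; the two definitions agree on the overlap. Then $\wh g$ is constant on equivalence classes (it is $S^1$-invariant on $\del M$, being a product of $S^1$-invariant factors there), so it descends to $g \colon \Mcut \to \R$. To verify $g \in \calF_M$: Condition~($\calF 1$) holds on $\intM$ because $\wh g$ is smooth on $\intW$ and vanishes on the open complement of $\supp \wh\chi$ in $\intM$, whose union is all of $\intM$; Condition~($\calF 2$) holds because multiplying the polar-form witness for $h \circ c_W$ by $\wh\chi$ (which is $S^1$-invariant near $\del M$) produces a polar-form witness near $\del M \cap W$, which glues via an $S^1$-invariant partition of unity with the zero witness on a neighbourhood of $\del M \setminus \supp \wh\chi$. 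On the open neighbourhood of $\iota_\cut(y)$ where $\chi \equiv 1$, the functions $g$ and $h \circ \iota_\cut^{-1}$ coincide, completing the local extension.

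The hard part will be constructing the bump function $\wh\chi$ with simultaneously the correct $S^1$-invariance near $\del M$, saturation, and support properties, and then carefully verifying Condition~($\calF 2$) for the glued function $\wh g$ near the boundary. Neither is conceptually deep given the framework of $S^1$-invariant partitions of unity and Construction~\ref{calF M}, but both demand careful bookkeeping of the interaction between the support of $\wh\chi$, its complement, and the polar-form witnesses.
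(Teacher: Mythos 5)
Your proof is correct and essentially the same as the paper's: smoothness of $\iota_\cut$ follows from Lemma~\ref{descends smooth}, and the converse inclusion $\calF_W \subset \iota_\cut^*\calF_M$ is obtained in both arguments by multiplying $h \circ c_W$ by an equivariant bump function supported in $W$ and verifying Conditions~($\calF 1$) and ($\calF 2$) for the resulting globally defined function. The only differences are stylistic: the paper splits cleanly into the two cases $y \in \intM_\cut$ and $y \in \Mred$ rather than attempting one unified bump, and for the ($\calF 2$)-witness no partition of unity is needed since the candidate $H(x,z) := \hat\chi(x)H_W(x,z)$ on $V_W$ and the zero witness on $M \ssminus \supp\hat\chi$ already agree on their overlap.
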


\begin{proof}
Let $\calF_M$ and $\calF_W$, respectively, 
be the differential structures on $\Mcut$ and on $\Wcut$
that are obtained from Construction~\ref{calF M}.
We need to show that $\calF_W$ coincides with the subset differential structure
that is induced from $\calF_M$.

By Lemma~\ref{descends smooth}, the inclusion map of $\Wcut$ into $\Mcut$
is smooth as a map of differential spaces.
So the subset differential structure on $\Wcut$
that is induced from $\calF_M$ is contained in $\calF_W$.
For the converse, fix any function $h \colon \Wcut \to \R$ 
in the differential structure $\calF_W$,
let $y$ be a point in $\Wcut$, and let $\hat{h} := h \circ c \colon W \to \R$.
\begin{itemize}
\item
Suppose that $y = \{ x \}$ for $x \in \intM$.
Let $\rho \colon M \to \Rplus$ be a smooth function
that is equal to $1$ near $x$
and whose support is contained in $\intM \cap W$.
Let $\hat{g} \colon M \to \R$ be equal to $\rho \hat{h}$ on $\intM \cap W$
and zero elsewhere.
Let $g \colon \Mcut \to \R$ be such that $\hat{g} = g \circ c$.
\item
Suppose that $y = S^1 \cdot x$ for $x \in \del M$.
Let $\rho \colon M \to \Rplus$ be a smooth function
whose support is contained in $U_M \cap W$, that is $S^1$-invariant,
and that is equal to $1$ near $x$
(see Lemma~\ref{subordinate}\eqref{bump}).
Let $\hat{g} \colon M \to \R$ be equal to $\rho \hat{h}$ on $U_M \cap W$
and zero elsewhere.
Let $g \colon \Mcut \to \R$ be such that $\hat{g} = g \circ c$.
\end{itemize}
In each of these cases, the function $g$ is in $\calF_M$, 
and it is equal to $h$ on some open neighbourhood of $y$ in $\Mcut$
(with respect to the given topology of $\Mcut$ as a quotient of $M$,
hence---by Lemma~\ref{Mcut:initial topology}---also with respect 
to the initial topology of $\Mcut$ that is induced from $\calF_M$).
Because $y$ was arbitrary, this implies that $h \colon W \to \R$ is in the 
subset differential structure that is induced from $\calF_M$.
Because $h$ was arbitrary, 
$\calF_W$ is contained in the subset differential structure on $\Wcut$
that is induced from $\calF_M$.
\end{proof}

\section{The cut space is a manifold}
\labell{sec:proof}


By Lemmas~\ref{descends}, \ref{composition}, and~\ref{identity},
the cutting construction gives a functor 
from the category of manifolds-with-boundary,
equipped with free circle actions near the boundary,
and their equivariant transverse maps,
to the category of topological spaces and their continuous maps.
By Corollary~\ref{FM is differential structure} 
and Lemma~\ref{descends smooth},
when each $\Mcut$ is equipped with the set of real valued functions $\calF_M$
of Construction~\ref{calF M},
the cutting construction defines a functor
to the category of differential spaces and their smooth maps.
In Proposition~\ref{Mcut:manifold}, 
which is a reformulation of Theorem~\ref{mfld str},
we use these functoriality properties
to give a precise proof that cutting yields smooth manifolds.


\begin{Proposition} \labell{Mcut:manifold}
Let $M$ be a manifold-with-boundary,
equipped with a free circle action 
on a neighbourhood $U_M$ of its boundary.
Then the cut space $\Mcut$,
equipped with the differential structure $\calF_M$
that is obtained from Construction~\ref{calF M}
(see Corollary~\ref{FM is differential structure}), is a manifold.

Moreover, 
$\intM_\cut := c(\intM)$ is an open dense subset of $\Mcut$,
and $\Mred := c(\del M) = (\del M) / S^1$
is a codimension two closed submanifold of $\Mcut$.
\end{Proposition}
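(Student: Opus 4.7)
The plan is to invoke the differential-space characterization of manifolds recorded in Section~\ref{sec:differential spaces}: a differential space is an $n$-manifold iff it is Hausdorff, second countable, and every point has a neighbourhood diffeomorphic (as a differential space) to an open subset of $\R^n$. The Hausdorff and second-countable conditions on $\Mcut$ are already established by Lemmas~\ref{Mcut Hausdorff} and~\ref{Mcut 2nd countable}, once one recalls from Lemma~\ref{Mcut:initial topology} that the quotient topology coincides with the initial topology determined by $\calF_M$. So the substantive content of the proposition reduces to exhibiting, around each point of $\Mcut$, a chart to an open subset of $\R^n$ that is a diffeomorphism of differential spaces.

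I would split into two cases. For a point $y = \{x\} \in \intM_\cut$, I would apply Lemma~\ref{open subset} to $W := \intM$, which is trivially $S^1$-invariant near its boundary since $\del W = \emptyset$. This identifies $\intM_\cut$ as an open sub-differential-space of $\Mcut$; moreover, because $W$ has empty boundary, Condition~($\calF 2$) of Construction~\ref{calF M} is vacuous on $W$ and $\calF_W$ reduces to the ordinary smooth functions on the manifold $\intM$, so ordinary coordinate charts supply the required neighbourhood of $y$. For a boundary point $y = S^1 \cdot x$ with $x \in \del M$, I would use Proposition~\ref{prop:local} to obtain an equivariant diffeomorphism from an $S^1$-invariant open neighbourhood $W$ of $x$ onto the local model $D^{n-2} \times S^1 \times [0,\eps)$. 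Functoriality (Lemmas~\ref{descends smooth} and~\ref{composition} together with Corollary~\ref{inverse}) descends this to a diffeomorphism of differential spaces between $\Wcut$ and the cut of the local model, and Lemma~\ref{model} identifies that cut with the open subset $D^{n-2} \times D^2(\sqrt{\eps})$ of $\R^n$. A further application of Lemma~\ref{open subset} realises $\Wcut$ as an open sub-differential-space of $\Mcut$, producing the required chart around $y$.

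For the ``moreover'' clause, openness of $\intM_\cut$ in $\Mcut$ is again Lemma~\ref{open subset} applied to $\intM$. Density is a local matter: under the boundary chart just constructed, $\intM_\cut \cap \Wcut$ corresponds to $D^{n-2} \times (D^2 \ssminus \{0\})$, which is dense in $D^{n-2} \times D^2$, so $\Mred = \Mcut \ssminus \intM_\cut$ has empty interior. That $\Mred$ is closed is the complementary statement. To see that it is an embedded codimension-two submanifold, I would note that under the same boundary chart $\Mred \cap \Wcut$ corresponds to the standard codimension-two linear slice $D^{n-2} \times \{0\} \subset D^{n-2} \times D^2$; since such a chart can be chosen around every point of $\Mred$, the claim follows, with the resulting smooth structure on $\Mred$ agreeing with the principal-circle-bundle structure on $(\del M)/S^1$ described in Remark~\ref{decomposition}.

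The geometric core of the argument has already been done in Lemma~\ref{model} (a polar-coordinates computation packaged by Lemma~\ref{cylinder}); what remains is essentially assembly, and the main potential obstacle—consistency of local model charts on overlaps—is bypassed automatically by the differential-space formalism, since diffeomorphism of differential spaces is a purely intrinsic condition and passage to $S^1$-invariant-near-boundary open subsets is compatible with $\calF_M$ by Lemma~\ref{open subset}. Thus no explicit transition-function computation is required.
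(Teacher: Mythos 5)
Your proposal follows the same route as the paper: establish Hausdorff and second countability via the existing lemmas, identify the quotient topology with the initial topology, split into interior and boundary points, use Proposition~\ref{prop:local} and Lemma~\ref{model} for the boundary local model, and invoke Lemma~\ref{open subset} to realise these local models as open sub-differential-spaces of $\Mcut$. The only cosmetic difference is that for interior points you take $W = \intM$ globally and observe that $\calF_W$ collapses to the ordinary smooth structure, whereas the paper takes $W$ to be a small chart domain inside $\intM$; the two treatments are interchangeable, and both yield the same ``moreover'' clause via the boundary charts.
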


\begin{proof}
By Lemma~\ref{Mcut:initial topology},
the initial topology on $\Mcut$ determined by $\calF_M$
coincides with the quotient topology on $\Mcut$ induced from $M$.
By Lemmas~\ref{Mcut Hausdorff} and~\ref{Mcut 2nd countable},
$\Mcut$---with this topology---is Hausdorff and second countable.

Let $y$ be a point in $\Mcut$.  Let $n = \dim M$.

\begin{itemize}
\item 
If $y = \{ x \}$ for $x \in \intM$, let $W$ be the domain
of a coordinate chart on $M$ that satisfies $x \in W \subset \intM$.
Then the differential space $\Wcut$ is diffeomorphic 
to an open subset of~$\R^n$.

\item 
If $y = S^1 \cdot x$ for $x \in \del M$,
let $W$ be an $S^1$-invariant open neighbourhood of $x$ in $U_M$
as in Proposition~\ref{prop:local}, and let
$$ W \to D^{n-2} \times S^1 \times [0,\eps) $$
be a diffeomorphism that intertwines the $S^1$ action on $W$
with the rotations of the middle factor
and that intertwines an invariant boundary-defining function
with the projection to the last factor.
Then the differential space $\Wcut$ is diffeomorphic
to $\big(D^{n-2} \times S^1 \times [0,\eps)\big)_\cut$,
which, as a consequence of Lemma~\ref{model},
is diffeomorphic to $D^{n-2} \times D^2$.
\end{itemize}

Thus, in each of these cases, $\Wcut$---with the differential 
structure $\calF_W$ that is obtained from Construction~\ref{calF M}---is 
diffeomorphic to an open subset of $\R^n$.
By Lemma~\ref{open subset}, 
the inclusion map of $\Wcut$ into $\Mcut$
is a diffeomorphism 
with an open neighbourhood of $y$ in $\Mcut$
(where $\Wcut$ is equipped with the differential structure $\calF_W$
and its image is equipped with the subset differential structure  
induced from $\calF_M$).
So $y$ has a neighbourhood that is diffeomorphic
to an open subset of $\R^n$.

Because $\intM$ is open and dense in $M$ and is saturated,
$\intM_\cut$ is open and dense in $\Mcut$.
Because each of the above diffeomorphisms $\Wcut \to D^{n-2} \times D^2$
takes the intersection $\Mred \cap \Wcut$ 
to the subset $D^{n-2} \times \{0\}$ of $D^{n-2} \times D^2$,
the subset $\Mred$ is a codimension two submanifold of $\Mcut$.
\end{proof}


\begin{Lemma} \labell{Mcut:submanifolds}
Consider $\Mcut$ with its manifold structure
that is obtained from $\calF_M$; see Proposition~\ref{Mcut:manifold}.
Then the quotient map $c \colon M \to \Mcut$
restricts to a diffeomorphism 
$$\intM \to \intM_\cut$$
and to a principal circle bundle map 
$$\del M \to \Mred , $$
where the domains of these maps are equipped with their manifold structures
as subsets of $M$,
and the targets of these maps are equipped
with their manifold structures as subsets of $\Mcut$.
\end{Lemma}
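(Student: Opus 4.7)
The plan is to reduce both statements to explicit local verifications, using the equivariant collar charts of Proposition~\ref{prop:local} together with the model computation of Lemma~\ref{model}. For the first map, note that $c|_{\intM}$ is a bijection onto $\intM_\cut$, since the equivalence relation $\sim$ identifies only distinct points of $\del M$. To upgrade this to a diffeomorphism I would cover $M$ by two kinds of charts: on any coordinate ball $W \subset \intM$, the cut map is literally the identity under the chart of $\Mcut$ used in the proof of Proposition~\ref{Mcut:manifold}; on an equivariant collar chart $W \to D^{n-2} \times S^1 \times [0,\eps)$ from Proposition~\ref{prop:local}, Lemma~\ref{model} identifies $c$ with $(\xi, b, s) \mapsto (\xi, b\sqrt{s})$, which restricts on $s > 0$ to a smooth bijection $D^{n-2} \times S^1 \times (0,\eps) \to D^{n-2} \times (D^2 \ssminus \{0\})$ with smooth inverse $(\xi, w) \mapsto (\xi, w/|w|, |w|^2)$. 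Thus $c|_{\intM}$ is a local diffeomorphism, and being a global bijection, a diffeomorphism.

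For the second map, I would again work in an equivariant collar chart $W \to D^{n-2} \times S^1 \times [0,\eps)$. Under it, $\del W$ corresponds to $D^{n-2} \times S^1 \times \{0\}$ with the $S^1$ action rotating the middle factor. By Lemma~\ref{model} together with the last sentence of Proposition~\ref{Mcut:manifold}, the induced map $\del W \to \Mred \cap \Wcut$ is identified with the projection $D^{n-2} \times S^1 \to D^{n-2} \times \{0\}$, which is manifestly a trivial principal $S^1$-bundle. Covering $\del M$ by such charts thus yields a principal $S^1$-bundle atlas for $c|_{\del M} \colon \del M \to \Mred$; the transitions preserve the bundle structure because each chart was chosen $S^1$-equivariantly.

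The main obstacle is a bookkeeping one: verifying that the subset manifold structure $\Mred$ inherits from $\Mcut$ really corresponds, in each local chart, to the factor $D^{n-2} \times \{0\}$ of $D^{n-2} \times D^2$, in a manner compatible with the $S^1$ action on $\del M$. This is settled by combining the equivariance clause of Proposition~\ref{prop:local} with the explicit form of the cut map in Lemma~\ref{model}; no new ingredient beyond these local models should be required.
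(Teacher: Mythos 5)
Your proof is correct, but it proceeds by a genuinely different route from the paper's. You work directly in the local models: you establish $c|_{\intM}$ as a bijective local diffeomorphism by inspecting the chart expression $(\xi,b,s)\mapsto(\xi,b\sqrt{s})$ on $s>0$, and you assemble $c|_{\del M}\colon\del M\to\Mred$ from local trivializations $D^{n-2}\times S^1\to D^{n-2}\times\{0\}$ inside the collar charts, with equivariance of the charts guaranteeing that the local trivializations patch into a principal bundle atlas. The paper instead stays in the language in which $\Mcut$ was defined, namely smooth functions. For $\intM$, it invokes Lemma~\ref{open subset} to identify the subset structure on $\intM_\cut$ with the one from $\calF_{\intM}$, and then reads off the claim from condition~($\calF 1$). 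For $\Mred$, it compares the two candidate function algebras (functions smooth on the submanifold $\Mred\subset\Mcut$ vs.\ functions whose pullback to $\del M$ is smooth) directly: smooth extension off a closed submanifold and evaluation $H(x,0)$ handle one inclusion; an equivariant collar projection $\pi\colon V\to\del M$ and the function $H(x,z):=\wh g(\pi(x))$ handle the other. Your chart-based argument is more concrete and visual, and carries slightly more bookkeeping (which you flag honestly); the paper's approach avoids chart comparisons entirely by keeping everything functional, which fits better with the differential-space setup and means it never has to re-derive that the local identifications are mutually compatible. Both are complete and both recycle Proposition~\ref{prop:local} and Lemma~\ref{model}; the paper's is arguably the cleaner way to close the loop given that $\calF_M$ is what defines the smooth structure in the first place.
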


\begin{proof}
By Lemma~\ref{open subset}, $\intM_\cut$ is open in $\Mcut$,
and its manifold structure as a subset of $\Mcut$
coincides with its manifold structure that is obtained 
from the differential structure $\calF_{\intM}$
of Construction~\ref{calF M}.
Because $c|_{\intM} \colon x \mapsto \{ x \}$
is a bijection onto $\intM_\cut$, 
to show that the map $\intM \to \intM_\cut$ is a diffeomorphism,
we need to show that,
for any function $h \colon \intM_\cut \to \R$,
the function $g$ is in $\calF_{\intM}$
iff its pullback $\wh{h} \colon \intM \to \R$ is smooth.
This, in turn, follows from 
Condition~($\calF 1$) of Construction~\ref{calF M}.

As we already mentioned in Remark~\ref{decomposition},
there exists a unique manifold structure on $\Mred$ such that 
the quotient map $\del M \to \Mred$ is a principal $S^1$ bundle.
With this structure,
a real valued function on $\Mred$ is smooth
iff its pullback to $\del M$ is smooth.
We will show that, for any function $g \colon \Mred \to \R$,
the function $g$ is smooth on $\Mred$ as a submanifold of $\Mcut$ 
iff its pullback $\wh{g} := g \circ c \colon \del M \to \R$ is smooth.

Suppose that $g \colon \Mred \to \R$ is smooth on $\Mred$ 
as a submanifold of $\Mcut$.
Because this submanifold is closed,
the function $g$ extends to a smooth function $h \colon \Mcut \to \R$.
The pullback $\wh{h} \colon M \to \R$ then satisfies
the conditions of Construction~\ref{calF M},
and its restriction to $\del M$ coincides with $\wh{g}$.
Let $V$ be an invariant open neighbourhood of $\del M$ in $U_M$ and 
$$ H \colon V \times \C \to \R $$ 
be a smooth function that relates to $\wh{h}$ 
as in Condition~($\calF 2$) of Construction~\ref{calF M}.
Then $\wh{g}(x) = \wh{h}(x) = H(x,0)$ for all $x \in \del M$.
Because $H$ is smooth, $\wh{g}$ is smooth.

Conversely, suppose that $\wh{g} \colon \del M \to \R$ is smooth.
Let $V$ be an $S^1$-invariant neighbourhood of $\del M$ in $U_M$
and $\pi \colon V \to \del M$
an $S^1$-equivariant collar neighbourhood projection map.
Define $\wh{h} \colon V \to \R$ by $\wh{h}(x) := \wh{g}(\pi(x))$.
Then $\wh{h}$ descends to a function $h \colon \Vcut \to \R$
such that $h \circ c = \wh{h}$,
and the function $h$ is in the differential structure $\calF_V$.
(In Condition~($\calF 2$) of Construction~\ref{calF M} applied to $V$, 
we can take the function $H \colon V \times \C \to \R$ 
to be $H(x,z) := \wh{g}(\pi(x))$.)
By Lemma~\ref{open subset}, $\Vcut$ is open in $\Mcut$,
and its manifold structure as an open subset of $\Mcut$
coincides with its manifold structure that is obtained 
from the differential structure $\calF_V$.
So $h \colon \Vcut \to \R$ is smooth on $\Vcut$ 
as an open subset of $\Mcut$,
and so $g$, being the restriction of $h$ to the submanifold $\Mred$,
is smooth on $\Mred$
as a submanifold (of $\Vcut$, hence) of~$\Mcut$. 
\end{proof}

\begin{Remark} \labell{dependence on action}
The topology on $\Mcut$,
and the manifold structures on the pieces $\intM_\cut$ and $\Mred$, 
depend only on the circle action on the boundary $\del M$.
But different extensions of this circle action 
to a neighbourhood the boundary 
can yield manifold structures on $\Mcut$ that are diffeomorphic
but not equal.
For example, take the half-cylinder $M = S^1 \times [0,\infty)$
with the standard $S^1$-action.
Then, define a new $S^1$-action 
by conjugating the standard $S^1$-action by a diffeomorphism
of the form $\psi( b ,s) := ( b , s\, g(b)^2)$
where $g \colon S^1 \to \Rpos$ is some smooth function.
The function $\wh{h}(e^{i\theta},s) := g(e^{i\theta}) \sqrt{s} \cos\theta$
descends to a real valued function on $\Mcut$
that is smooth with respect to the new differential structure 
but is not smooth with respect to the standard differential structure
unless $g$ is constant.
(Indeed, the corresponding function on $\R^2$ satisfies 
$$ h(x,y) = x g(e^{i\theta}) $$
whenever $x=r\cos\theta$ and $y=r\sin\theta$ with $r \geq 0$.
So
$$ \lim\limits_{\substack{t \to 0 \\ t>0}} \frac{h(tx,ty)-h(0,0)}{t} 
   = x g(e^{i\theta}) $$
whenever $x=\cos\theta$ and $y=\sin\theta$.
If $h$ is smooth, then, by the chain rule, 
the limit on the left is equal to $ax+by$
where $a= \left.\frac{\del h}{\del x}\right|_{(0,0)}$ 
 and $b= \left.\frac{\del h}{\del y}\right|_{(0,0)}$, so
$$ ax+by = xg(e^{i\theta}) $$
whenever $x=\cos\theta$ and $y=\sin\theta$.
Substituting $(x,y) = (1,0)$ and $(x,y) = (0,1)$,
we obtain that $a=g(1)$ and $b=0$, and so
$$ ax = xg(e^{i\theta}) $$
whenever $x=\cos\theta$.
So $g(e^{i\theta})=a$ 
(when $\cos \theta \neq 0$, and hence, by continuity,) 
for all $e^{i\theta} \in S^1$.
\eor
\end{Remark}

\section{Cuttings with immersions, submersions, embeddings}
\labell{sec:submanifolds}

In this section, we cut submanifolds, and a bit more.
Here is a precise statement.

\begin{Lemma} \labell{immersion etc}
Let $M$ and $N$ be manifolds-with-boundary,
equipped with free circle actions on neighbourhoods of the boundary. 
Let $\Mcut$ and $\Ncut$ be obtained from $M$ and $N$
by the smooth cutting construction.
Let $\psi \colon M \to N$ be an equivariant transverse map
and $\psi_\cut \colon \Mcut \to \Ncut$
the resulting smooth map of the cut spaces.
\begin{itemize}
\item
If $\psi$ is an immersion, so is $\psi_\cut$.
\item
If $\psi$ is a submersion, so is $\psi_\cut$.
\item
If $\psi$ is an embedding, so is $\psi_\cut$.
\end{itemize}
\end{Lemma}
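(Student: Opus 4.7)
The plan is to verify each property pointwise, using the local models of Section~\ref{sec:local}. On the open dense subset $\intM_\cut$, the quotient maps $c_M$ and $c_N$ restrict to diffeomorphisms (Lemma~\ref{Mcut:submanifolds}), so $\psi_\cut|_{\intM_\cut}$ is identified with $\psi|_{\intM}$ under these, and whichever of the three properties $\psi$ has on the interior descends for free. All the work is at a point $c(x)$ with $x \in \del M$.

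Near such a point I would choose Proposition~\ref{prop:local} coordinates on the two sides compatibly. Start with an invariant boundary defining function $f_N$ on $N$ realised as the last coordinate of a chart $W_N \to D^{n'-2} \times S^1 \times [0,\eps_N)$. The pulled-back function $f_M := f_N \circ \psi$ is then, by the equivariant-transverse hypothesis, an invariant boundary defining function on $M$, and a second application of Proposition~\ref{prop:local} produces a chart $W_M \to D^{n-2} \times S^1 \times [0,\eps_M)$ near $x$ in which $f_M$ is the last coordinate, with $W_M \subseteq \psi^{-1}(W_N)$ after shrinking. In these coordinates the last component of $\psi$ is the identity in $s$, and equivariance pins down the full form
$$ \psi(\xi, b, s) \,=\, \big(\Xi(\xi, s),\; b \cdot \beta(\xi, s),\; s\big), $$
where $\Xi$ is smooth (independent of $b$ because equivariance in the first factor forces $\Xi(\xi, ab, s) = \Xi(\xi, b, s)$), and $\beta(\xi, s) := B(\xi, 1, s) \in S^1$ is obtained by writing the equivariant second component as $B(\xi, b, s) = b \cdot B(\xi, 1, s)$.

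By Lemma~\ref{model}, the induced cut coordinates identify $\psi_\cut$ with the manifestly smooth map
$$ (\xi, z) \;\longmapsto\; \big(\Xi(\xi, |z|^2),\; \beta(\xi, |z|^2)\, z\big). $$
At $z=0$ its Jacobian is block diagonal, with upper-left block $d_\xi \Xi(\xi,0)$ and lower-right block the complex-multiplication isomorphism $z \mapsto \beta(\xi,0)\, z$ of $T_0 \C$; so $d\psi_\cut$ is injective (resp.\ surjective) at $c(x)$ iff $d_\xi \Xi(\xi,0)$ is. A quick rank computation for the Jacobian of $\psi$ itself at the corresponding boundary point $(\xi, b, 0) \in \del M$ gives exactly the same equivalence. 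Combined with the interior case, this proves the immersion and submersion bullets. For embeddings, Lemma~\ref{descends} shows that injectivity and openness onto the image both descend to $\psi_\cut$; combining with the immersion statement gives the embedding statement.

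The step I expect to be the main obstacle is the compatible set-up of the local coordinates in the second paragraph: because the equivariant-transverse condition links the boundary defining functions on $M$ and $N$ only through the single equation $f_M = f_N \circ \psi$, one has to propagate the chart from $N$ back to $M$ rather than choose the two charts independently. Once the normal form of $\psi$ above is in hand, the derivative computations are a few lines of routine linear algebra.
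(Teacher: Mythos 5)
Your proposal is correct and follows essentially the same route as the paper: both treat the interior case via Lemma~\ref{Mcut:submanifolds}, use Proposition~\ref{prop:local} with the pulled-back boundary defining function $f_N \circ \psi$ to obtain the normal form $\psi(\xi,b,s) = (\Xi(\xi,s),\, b\,\beta(\xi,s),\, s)$, pass to cut coordinates via Lemma~\ref{model}, and conclude that both $d\psi$ and $d\psi_\cut$ at a boundary point are injective (resp.\ surjective) exactly when $d_\xi\Xi(\cdot,0)$ is, with the embedding statement then reduced to Lemma~\ref{descends}. The only difference is cosmetic: the paper factors $\psi$ and $\psi_\cut$ through an explicit shearing diffeomorphism to see the rank equivalence, whereas you read off the block structure of the Jacobian directly at $s=0$ and $z=0$.
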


\begin{proof}
Let $x_M$ be a point in $M$. 
Let $x_N$ be its image in $N$,
and let $y_M$ and $y_N$ be the images of $x_M$ and $x_N$
in $\Mcut$ and $\Ncut$, so that $\psi_\cut (y_M) = y_N$.

\begin{itemize}
\item
Suppose that $x_M \in \intM$. 
Then $x_N \in \intN$ (see Lemma~\ref{descends}). 
Since the quotient maps $c_M|_{\intM} \colon \intM \to \intM_\cut$ 
and $c_N|_{\intN} \colon \intN \to \intN_\cut$ are diffeomorphisms
(see Lemma~\ref{Mcut:submanifolds}),
$d\psi|_{x_M}$ is injective (resp., surjective)
iff $\left.d\psi_\cut\right|_{y_M}$ is injective (resp., surjective).

\item
Suppose that $x_M \in \del M$.
Equip $N$ with any invariant boundary defining function $f$,
and equip $M$ with the boundary defining function $f \circ \psi$.
Proposition~\ref{prop:local} implies that
we can identify an open neighbourhood of $x_M$ in~$M$
with $U \times S^1 \times [0,\eps)$ where $U$ is open in $\R^{n-2}$,
and an open neighbourhood of $x_N$ in~$N$
with $V \times S^1 \times [0,\eps)$ where $V$ is open in $\R^{k-2}$,
such that the map $\psi$ takes the form
$$ \psi(x,a,s) = \left( \, \ol{\psi}(x,s) , \, a \, b(x,s) , \, s \, \right)$$
for some smooth functions 
$${\ol{\psi} \colon U \times [0,\eps) \to V}
\quad \text{ and } \quad
{b \colon U \times [0,\eps) \to S^1}.$$
Expressing $\psi$ as the composition
of the diffeomorphism 
$$ (x,a,s) \mapsto (x,ab(x,s),s) $$
with the map 
$$ (x,a,s) \mapsto (\ol{\psi}(x,s),a,s),$$
we see that 
$\left.d\psi\right|_{x_M}$ is injective (resp., surjective)
iff $\left.d\ol{\psi}(\cdot,0)\right|_{\ol{x}_M}$ is injective 
(resp., surjective),
where $\ol{x}_M$ is the corresponding point of $U$.

By Lemmas~\ref{model} and~\ref{open subset}, we can further identify 
an open neighbourhood of $y_M$ in $\Mcut$ with $U \times D^2$
and an open neighbourhood of $y_N$ in $\Ncut$ with $V \times D^2$,
where $D^2$ is the open disc of radius $\sqrt{\eps}$ 
about the origin in $\R^2$,
and the map $\psi_\cut$ becomes
$$ \psi_\cut (x,z) 
   = \left( \, \ol{\psi}(x,|z|^2) , \, z \, b(x,|z|^2) \right ) .$$
Expressing $\psi_\cut$ as the composition
of the diffeomorphism 
$$ (x,z) \mapsto ( \, x, \, z\, b(x,|z|^2) \, ) $$
with the map 
$$ (x,z) \mapsto (\,\ol{\psi}(x,|z|^2),\,z\,),$$
we see that $\left.d\psi_\cut\right|_{y_M}$ too is injective 
(resp., surjective)
iff $\left.d\ol{\psi}(\cdot,0)\right|_{\ol{x}_M}$ is injective 
(resp., surjective).

We conclude that $\left.d\psi\right|_{x_M}$ is injective (resp., surjective)
iff $\left.d\psi_\cut\right|_{y_M}$ is injective (resp., surjective).
\end{itemize}

We conclude that if $\psi$ is an immersion, then so is $\psi_\cut$, 
and if $\psi$ is a submersion, then so is $\psi_\cut$.
By the local immersion theorem\footnote{
I've adopted this name from John Lee~\cite{JohnLee:smooth}
},
being an embedding is equivalent to being an immersion
and a topological embedding (namely, a homeomorphism with its image).
By Lemma~\ref{descends}, if $\psi$ is a topological embedding,
then so is $\psi_\cut$.
We conclude that if $\psi$ is an embedding, then so is $\psi_\cut$.
\end{proof}

Below, \emph{submanifold-with-boundary} refers to a subset
that, with the subset differential structure,
is a manifold-with-boundary.

\begin{Corollary} \labell{submanifold}
Let $N$ be a manifold-with-boundary,
equipped with a free circle action near the boundary,
and let $M$ be a submanifold-with-boundary of $N$.
Suppose that the boundary of $M$ is contained in the boundary of $N$,
that the intersection of $M$ with some invariant neighbourhood of $\del N$
is invariant, and that the inclusion map of $M$ in $N$ 
is an equivariant transverse map.
Then $\Mcut$ is a submanifold of $\Ncut$.
\end{Corollary}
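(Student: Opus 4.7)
The plan is to reduce this statement directly to Lemma~\ref{immersion etc} applied to the inclusion map $\iota \colon M \hookrightarrow N$. First I would verify that the stated hypotheses already make $M$ itself into an object of our category. Since $\partial M \subset \partial N$ and some invariant open neighbourhood $W$ of $\partial N$ meets $M$ in an $S^1$-invariant set, the intersection $U_M := M \cap W$ is an $S^1$-invariant neighbourhood of $\partial M$ in $M$ on which the circle action is defined and free (as the restriction of the free action on $W \subset U_N$). Thus $M$ is a manifold-with-boundary equipped with a free circle action near its boundary, and the cutting construction produces a smooth manifold $\Mcut$ via Proposition~\ref{Mcut:manifold}.

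Next, since $M$ is a submanifold-with-boundary of $N$, the inclusion $\iota$ is tautologically a smooth embedding of manifolds-with-boundary. By the third hypothesis it is, in addition, an equivariant transverse map, so the functor of Section~\ref{sec:functoriality} produces a smooth map $\iota_\cut \colon \Mcut \to \Ncut$; moreover $\iota_\cut$ is one-to-one by Lemma~\ref{descends}, since $\iota$ is. Applying the third bullet of Lemma~\ref{immersion etc}, the fact that $\iota$ is an embedding yields that $\iota_\cut \colon \Mcut \to \Ncut$ is again an embedding. An embedding between smooth manifolds identifies its domain with a submanifold of its codomain, so $\iota_\cut(\Mcut)$ is a submanifold of $\Ncut$, and we identify $\Mcut$ with $\iota_\cut(\Mcut)$.

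I do not anticipate any genuine obstacle here: after the heavy lifting done in Lemma~\ref{immersion etc} (with its local model computation using Proposition~\ref{prop:local} and Lemma~\ref{model}), this corollary is essentially a bookkeeping matter. The one point that deserves attention is the compatibility of the intrinsic manifold structure on $\Mcut$ coming from Construction~\ref{calF M} with its structure as a submanifold of $\Ncut$, but this compatibility is precisely what the embedding property of $\iota_\cut$ encodes.
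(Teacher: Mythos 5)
Your proof is correct and matches the paper's intent: Corollary~\ref{submanifold} is stated immediately after Lemma~\ref{immersion etc} with no separate proof, precisely because it is meant to follow from the embedding case of that lemma exactly as you describe. The one piece of care you add --- checking that the hypotheses make $M$ itself an object of the relevant category by taking $U_M := M \cap W$ for an invariant neighbourhood $W$ of $\partial N$ --- is the right thing to verify before invoking the cutting functor.
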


\section{Cutting with differential forms}
\labell{sec:diff forms}

In this section,
we give a criterion for a differential form to descends to a cut space.

Throughout this section,
let $M$ be a manifold-with-boundary equipped with a free circle action 
on a neighbourhood $U_M$ of its boundary,
let $\Mcut$ be obtained from $M$ by the smooth cutting construction,
and let $c \colon M \to \Mcut$ be the quotient map.

\begin{Lemma} \labell{cut diff form}
Let $\beta$ be a differential form on $M$
that is basic on $\del M$ and invariant near $\del M$.
Then there exists a unique differential form $\beta_\cut$ on $\Mcut$
whose pullback through $c|_{\intM} \colon \intM \to \intM_\cut$
coincides with~$\beta$ on $\intM$.
\end{Lemma}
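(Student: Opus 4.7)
The plan is to establish uniqueness globally, then construct $\beta_\cut$ in local charts around boundary points and patch. For uniqueness, observe that by Lemma~\ref{Mcut:submanifolds} the quotient map $c|_{\intM}$ is a diffeomorphism onto the open dense subset $\intM_\cut \subset \Mcut$ (Proposition~\ref{Mcut:manifold}), so any continuous differential form on $\Mcut$ is determined by its restriction to $\intM_\cut$; on $\intM_\cut$ we are forced to push $\beta|_{\intM}$ forward via this diffeomorphism. The substantive content is therefore smooth extension across the codimension-two submanifold $\Mred$.

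For local existence near a point $x \in \del M$, apply Proposition~\ref{prop:local} to obtain an $S^1$-invariant chart $W \cong D^{n-2} \times S^1 \times [0,\eps)$ with invariant boundary defining function $s$; Lemma~\ref{model} and Lemma~\ref{open subset} then identify $\Wcut$ with $D^{n-2} \times D^2$ via $z = b\sqrt{s}$, so $s = |z|^2$. Shrinking $W$ so that $\beta|_W$ is $S^1$-invariant, let $X$ be the generator of the circle action and decompose
$$ \beta|_W \;=\; \alpha + d\theta \wedge \gamma , \qquad \gamma := \iota_X \beta , \quad \alpha := \beta - d\theta \wedge \gamma . $$
Both $\alpha$ and $\gamma$ are $S^1$-invariant with $\iota_X \alpha = \iota_X \gamma = 0$, so in the trivialization they are combinations of the $dx_i$ and $ds$ with smooth coefficients depending only on $(x, s)$. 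Write $\alpha = \alpha_0 + ds \wedge \alpha_1$ and $\gamma = \gamma_0 + ds \wedge \gamma_1$. The basic-on-$\del M$ hypothesis gives $\iota_X \beta |_{\del M} = 0$; pulling back to $\{s = 0\}$ kills $ds$ and forces $\gamma_0(x,0) = 0$, so $\gamma_0 = s\, \wt\gamma_0$ with $\wt\gamma_0$ smooth.

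The polar-squared identities
$$ s = |z|^2 , \qquad s\, d\theta = x_1\, dx_2 - x_2\, dx_1 , \qquad ds \wedge d\theta = 2\, dx_1 \wedge dx_2 $$
extend smoothly across $z = 0$, so substitution yields
$$ \beta|_W \;=\; \alpha_0 + ds \wedge \alpha_1 + (s\, d\theta) \wedge \wt\gamma_0 + (ds \wedge d\theta) \wedge \gamma_1 , $$
which exhibits $\beta|_W$ as the $\wh{\psi}$-pullback of a smooth form on $D^{n-2} \times D^2$ and hence produces a local $\beta_\cut^W$ on $\Wcut \subset \Mcut$. By the uniqueness observation, the local forms $\beta_\cut^W$ agree on overlaps (each nonempty overlap meets the dense $\intM_\cut$) and agree with the push-forward on $\intM_\cut$, so they patch to a global smooth $\beta_\cut$. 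The main obstacle is precisely the smooth extension across $\Mred$, dispatched by observing that the basic-at-boundary condition is exactly what is needed to absorb every factor of $d\theta$ into one of the smooth polar-squared combinations $s\, d\theta$ or $ds \wedge d\theta$.
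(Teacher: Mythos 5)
Your proof takes essentially the same route as the paper's: uniqueness and patching are reduced to density of $\intM_\cut$, the local model of Proposition~\ref{prop:local} and Lemma~\ref{model} is invoked, Hadamard's lemma is used to extract a factor of $s$ from the coefficient of $d\theta$ (courtesy of the basic-on-$\del M$ hypothesis), and the polar-squared identities $s = |z|^2$, $s\,d\theta = u\,dv - v\,du$, $ds\wedge d\theta = 2\,du\wedge dv$ exhibit the result as a smooth form on $D^{n-2}\times D^2$. The only cosmetic difference is that you first split off $\gamma = \iota_X\beta$ and then split by $ds$, whereas the paper writes the full four-term decomposition at once; this does not change the substance. (There is a harmless sign slip in your final display — expanding $d\theta\wedge ds\wedge\gamma_1$ gives $-\,ds\wedge d\theta\wedge\gamma_1$ — but since $\gamma_1$ is arbitrary this does not affect the argument.)
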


\begin{Remark*}
In Lemma~\ref{cut diff form}, the assumption on $\beta$ is that 
its pullback to $\del M$ is $S^1$ basic
and its restriction to some invariant neighbourhood 
of~$\del M$ in $U_M$ is $S^1$ invariant.
\end{Remark*}

We summarize Lemma~\ref{cut diff form} in the following diagram,
which encodes pullbacks of differential forms. 
(Note that we do not obtain $\beta$ is a pullback of $\beta_\cut$
because the map $M \to \Mcut$ is not smooth.)
$$ \xymatrix{
(M,\beta) && 
 \ar[ll]_{\text{inclusion}} \ar[d]_{c|_{\intM}}
 (\intM,\beta|_{\intM}) \\
(\Mcut,\beta_\cut) && 
 \ar[ll]_{\text{inclusion}} 
 (\intM_\cut, {\beta_\cut}|_{\intM_\cut})
}$$

\begin{proof}[Proof of Lemma~\ref{cut diff form}]
Because $\intM_\cut$ is open and dense in $\Mcut$
and the map $c|_{\intM} \colon \intM \to \intM_\cut$ 
is a diffeomorphism, 
it is enough to show that such a form $\beta_\cut$ exists 
locally near each point of $\intM_\cut$.
Indeed, by continuity, such local forms are unique 
and patch together into a form on $\Mcut$ as required.
Because $\Mcut = \Mred \sqcup \intM_\cut$,
it is enough to consider neighbourhoods of points in $\Mred$.
Thus, we may assume that $\beta$ is invariant 
and (by Proposition~\ref{prop:local} and Lemma~\ref{open subset}) that 
$$ M = D^{n-2} \times S^1 \times [0,\eps) .$$
We write the components of a point in $M$ as $x \in D^{n-2}$, 
$a = e^{i\theta} \in S^1$, and $s \in \R_{\geq 0}$.  
Let $k$ be the degree of $\beta$.
The assumption on $\beta$ implies (by Hadamard's lemma) that we can write 
$$ \beta \, = \, \beta_k 
         \, + \, \beta_{k-1} \wedge ds
         \, + \, s \hat{\beta}_{k-1} \wedge d\theta
         \, + \, \beta_{k-2} \wedge ds \wedge d\theta \, ,$$
where for each $\ell$
$$ \beta_\ell = \sum_{\substack{I = (i_1,\ldots,i_{\ell}) \\ 
                                i_1 < \ldots < i_\ell }} 
   b_I(x,s) dx_{i_1} \wedge \ldots \wedge dx_{i_\ell} $$
for some smooth functions $b_I(x,s)$ on $D^{n-2} \times [0,\eps)$,
and similarly for~$\hat{\beta}_\ell$.
(If $\ell < 0$, the sum is empty and $\beta_\ell = 0$.)

Identify $\Mcut$ with $D^{n-2} \times D^2$ as in Lemma~\ref{model},
with coordinates $x$ and $z = u+iv$.
The inverse of the diffeomorphism 
$c|_{\intM} \colon \intM \to \intM_\cut$
is then given by 
$(x,z) \mapsto (x,a,s)$ with $a = z/|z|$ and $s=|z|^2$.
Writing 
$$ ds = (2udu + 2vdv), \quad  sd\theta = udv - vdu, \quad
   ds \wedge d\theta = 2du \wedge dv \, , $$ 
and 
$$ (\beta_\ell)_\cut = \sum_{\substack{I = (i_1,\ldots,i_{\ell})
 \\ i_1 < \ldots < i_\ell}} 
   b_I(x,|z|^2) dx_{i_1} \wedge \ldots \wedge dx_{i_\ell} ,$$
and similarly for $\hat{\beta}_\ell$, 
we can then take
\begin{multline*}
\beta_\cut \, := \, (\beta_k)_\cut 
              \, + \, (\beta_{k-1})_\cut \wedge (2udu + 2vdv) \\
              \, + \, (\hat{\beta}_{k-1})_\cut \wedge (udv-vdu)
              \, + \, (\beta_{k-2})_\cut \wedge (2du \wedge dv) \, .
\end{multline*}
\end{proof}

We now give some important properties of the cutting procedure
for differential forms.

\begin{Lemma} \labell{d and wedge}
Let $\beta$ and $\beta'$ be differential forms on $M$
that are basic on~$\del M$ and invariant near~$\del M$.  Then 
$$ (d\beta)_\cut = d(\beta_\cut) \quad \text{ and } \quad
 (\beta \wedge \beta')_\cut = \beta_\cut \wedge \beta'_\cut.$$
\end{Lemma}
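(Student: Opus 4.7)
The plan is to reduce both identities to obvious ones on the open dense subset $\intM_\cut \subset \Mcut$, using the uniqueness built into Lemma~\ref{cut diff form}: a smooth form on $\Mcut$ is determined by its pullback through the diffeomorphism $c|_{\intM}\colon \intM \to \intM_\cut$, since $\intM_\cut$ is open and dense in $\Mcut$ and both sides of each equality will be smooth forms there.

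First I would check that $d\beta$ and $\beta\wedge\beta'$ still satisfy the hypotheses of Lemma~\ref{cut diff form}, so that $(d\beta)_\cut$ and $(\beta\wedge\beta')_\cut$ are actually defined. Since the $S^1$-action is smooth and $\beta$, $\beta'$ are invariant on some invariant neighbourhood of $\del M$, so are $d\beta$ and $\beta\wedge\beta'$ (exterior derivative and wedge product commute with pullback by diffeomorphisms, in particular with the $S^1$-action). For the basic-on-$\del M$ condition, one pulls back to $\del M$, noting that pullback commutes with $d$ and $\wedge$, and then uses that basic forms on $\del M$ (invariant and horizontal) are closed under both $d$ and $\wedge$.

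Next I would verify the two identities on $\intM_\cut$. Since $c|_{\intM}$ is a diffeomorphism, its pullback commutes with $d$ and $\wedge$; combined with the defining property $(c|_{\intM})^*(\gamma_\cut|_{\intM_\cut}) = \gamma|_{\intM}$ for any $\gamma$ admissible in Lemma~\ref{cut diff form}, pulling both sides of $(d\beta)_\cut = d(\beta_\cut)$ back through $c|_{\intM}$ gives $d\beta|_{\intM}$ on both sides, and pulling both sides of $(\beta\wedge\beta')_\cut = \beta_\cut\wedge\beta'_\cut$ back gives $(\beta\wedge\beta')|_{\intM}$ on both sides. Since $c|_{\intM}$ is a diffeomorphism onto $\intM_\cut$, both equalities therefore hold on~$\intM_\cut$.

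Finally, both sides of each equation are smooth forms on the manifold $\Mcut$, and they agree on the open dense subset $\intM_\cut$, so by continuity they agree on all of $\Mcut$. The only mild subtlety is the bookkeeping in the first step (that $d$ and $\wedge$ preserve the invariance-near-$\del M$ and basic-on-$\del M$ conditions); I do not expect any serious obstacle beyond that.
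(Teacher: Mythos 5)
Your proof is correct and follows essentially the same route as the paper's: both sides of each identity agree on the open dense subset $\intM_\cut$ because $c|_{\intM}$ is a diffeomorphism and pullback by a diffeomorphism commutes with $d$ and $\wedge$, and then continuity extends the equality to all of $\Mcut$. You are somewhat more thorough than the paper in first checking that $d\beta$ and $\beta\wedge\beta'$ remain basic on $\del M$ and invariant near $\del M$ (so that their cut forms are defined via Lemma~\ref{cut diff form}); the paper takes this for granted, but the check you supply is accurate and harmless.
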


\begin{proof}
Because 
$c|_{\intM} \colon \intM \to \intM_\cut$
is a diffeomorphism that takes $\beta$ to $\beta_\cut$
and $\beta'$ to $\beta'_\cut$,
these equalities hold on $\intM_\cut$.
By continuity, they hold on all of $\Mcut$.
\end{proof}

\begin{Lemma} \labell{closed}
Let $\beta$ be a differential form on $M$
that is basic on $\del M$ and invariant near $\del M$.
Then $\beta$ is closed on $M$ 
if and only if $\beta_\cut$ is closed on $\Mcut$.
\end{Lemma}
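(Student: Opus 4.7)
The plan is to reduce the statement to Lemma~\ref{d and wedge}, which already gives the identity $(d\beta)_\cut = d(\beta_\cut)$, and then use the fact that $c|_{\intM} \colon \intM \to \intM_\cut$ is a diffeomorphism carrying $\beta$ to $\beta_\cut$ to transfer the vanishing of one side to the vanishing of the other.

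First I would verify that $d\beta$ satisfies the hypotheses of Lemma~\ref{cut diff form}, so that $(d\beta)_\cut$ is meaningful in the first place. Since $\beta$ is $S^1$-invariant on some invariant neighbourhood of $\del M$, and exterior differentiation commutes with pullback by the circle action, $d\beta$ is $S^1$-invariant near $\del M$. Moreover, the restriction of $d\beta$ to $\del M$ equals $d(\beta|_{\del M})$; since $\beta|_{\del M}$ is basic (i.e.\ invariant and annihilated by contraction with the orbit generator $\xi$), the Cartan formula $\iota_\xi d = \mathcal{L}_\xi - d\iota_\xi$ together with invariance shows that $d(\beta|_{\del M})$ is likewise basic. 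Hence $d\beta$ is basic on $\del M$ and invariant near $\del M$, and Lemma~\ref{cut diff form} applies to produce $(d\beta)_\cut$.

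Next, assuming $\beta$ is closed on $M$, the form $d\beta$ is identically zero, so $(d\beta)_\cut = 0$; by Lemma~\ref{d and wedge} this equals $d(\beta_\cut)$, so $\beta_\cut$ is closed. Conversely, suppose $\beta_\cut$ is closed. Then, again by Lemma~\ref{d and wedge}, $(d\beta)_\cut = d(\beta_\cut) = 0$. Pulling back through the diffeomorphism $c|_{\intM} \colon \intM \to \intM_\cut$ and using the defining property of the cut form (the pullback of $(d\beta)_\cut$ to $\intM$ equals $d\beta|_{\intM}$), we conclude that $d\beta$ vanishes on $\intM$. Since $d\beta$ is a smooth form on $M$ and $\intM$ is dense in $M$, this forces $d\beta = 0$ on all of $M$, so $\beta$ is closed.

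The proof is essentially formal once Lemma~\ref{d and wedge} is in hand; the only point requiring a moment's thought is the verification that $d\beta$ still satisfies the descent hypotheses of Lemma~\ref{cut diff form}, which is the single place one must invoke the Cartan calculus together with invariance of $\beta$ near the boundary.
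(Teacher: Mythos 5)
Your proof is correct, and at bottom it rests on the same two ingredients as the paper's argument: that $c|_{\intM}$ is a diffeomorphism carrying $\beta$ to $\beta_\cut$ (hence $d\beta$ to $d\beta_\cut$), and that $\intM$ and $\intM_\cut$ are dense, so a continuous form vanishing there vanishes everywhere. The paper applies those two facts directly; you route them through Lemma~\ref{d and wedge} (which itself is proved by the same density argument) and then run the density argument once more for the converse direction, so the detour is slightly circuitous but logically harmless. The one genuine addition you make is the verification that $d\beta$ is basic on $\del M$ and invariant near $\del M$ --- via commuting of $d$ with the $S^1$-action and the Cartan formula --- so that $(d\beta)_\cut$ is even defined by Lemma~\ref{cut diff form}. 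The paper relies on this implicitly when it writes $(d\beta)_\cut$ in Lemma~\ref{d and wedge} but never spells it out, so your check is a worthwhile bit of rigor rather than a redundancy.
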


\begin{proof}
Because 
$c|_{\intM} \colon \intM \to \intM_\cut$
is a diffeomorphism that takes $\beta$ to $\beta_\cut$,
\ $\beta$ is closed on $\intM$
if and only if $\beta_\cut$ is closed on $\intM_\cut$.
By continuity,
$\beta$ is closed on~$M$ if and only if $\beta_\cut$
is closed on~$\Mcut$.
\end{proof}

\begin{Lemma} \labell{nonzero}
$\beta \neq 0$ at a point $p$ of $M$
if and only if $\beta_\cut \neq 0$ at the point $c(p)$ of $\Mcut$.
\end{Lemma}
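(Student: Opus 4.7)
The plan is to split the proof into two cases according to whether $p$ lies in the open interior $\intM$ or on the boundary $\del M$, and to handle the boundary case via the local model from Section~\ref{sec:local}.

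For $p \in \intM$, the argument is immediate. By Lemma~\ref{Mcut:submanifolds}, the restriction $c|_{\intM} \colon \intM \to \intM_\cut$ is a diffeomorphism, and by the defining property of $\beta_\cut$ in Lemma~\ref{cut diff form}, $(c|_{\intM})^*\beta_\cut = \beta|_{\intM}$. Since pullback by a diffeomorphism is a pointwise linear isomorphism on exterior powers of cotangent spaces, $\beta|_p = 0$ if and only if $\beta_\cut|_{c(p)} = 0$.

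For $p \in \del M$, I would pass to the local model. By Proposition~\ref{prop:local}, Lemma~\ref{open subset}, and Lemma~\ref{model}, we may identify an $S^1$-invariant neighbourhood of $p$ with $D^{n-2} \times S^1 \times [0,\eps)$, the corresponding open subset of $\Mcut$ with $D^{n-2}\times D^2$, the quotient map with $(x,b,s)\mapsto(x, b\sqrt{s})$, and the invariant boundary defining function with $s$. In this model the Hadamard-type decomposition established in the proof of Lemma~\ref{cut diff form} writes
\[ \beta = \beta_k + \beta_{k-1}\wedge ds + s\hat\beta_{k-1}\wedge d\theta + \beta_{k-2}\wedge ds\wedge d\theta, \]
together with the explicit formula for $\beta_\cut$ in the $(x,u,v)$ coordinates. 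At $p = (x_0,\theta_0,0)$ the factor of $s$ kills the $\hat\beta_{k-1}$-term, so $\beta|_p$ is a linear combination of the independent wedge-products $dx_I$, $dx_I\wedge ds$, $dx_I\wedge ds \wedge d\theta$. At $c(p) = (x_0,0)$ the coordinate one-forms $2udu+2vdv$ and $udv - vdu$ vanish while $2du\wedge dv$ does not, so $\beta_\cut|_{c(p)}$ is a linear combination of $dx_I$ and $dx_I\wedge du\wedge dv$, with coefficients matching those of the corresponding surviving blocks in $\beta|_p$. Linear independence of the basis monomials in each of the two cotangent exterior powers then lets one read off the iff.

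The main obstacle I anticipate is the careful boundary bookkeeping: because the two cotangent exterior powers have different basis monomials and the Hadamard decomposition splits into four coefficient families, one must track term by term which coefficient functions actually contribute at each point and in which independent block. The hypothesis that $\beta$ is basic on $\del M$ plays the key role, as it is precisely what produces the factor of $s$ that neutralises the $\hat\beta_{k-1}$-contribution along the boundary and aligns the expressions on the two sides so that the equivalence is forced by the linear independence of the surviving wedge-products.
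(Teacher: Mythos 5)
Your approach mirrors the paper's own proof exactly---interior case via the diffeomorphism $c|_{\intM}$, boundary case via the local model and the Hadamard decomposition from the proof of Lemma~\ref{cut diff form}---but there is a gap in the boundary step, and your own bookkeeping exposes it. You correctly observe that at $p \in \del M$ the form $\beta|_p$ has three surviving blocks, with coefficients $\beta_k|_{s=0}$, $\beta_{k-1}|_{s=0}$, and $\beta_{k-2}|_{s=0}$, while $\beta_\cut|_{c(p)}$ has only two, with coefficients $(\beta_k)_\cut|_{z=0}$ and $(\beta_{k-2})_\cut|_{z=0}$, because both $2u\,du+2v\,dv$ and $u\,dv-v\,du$ vanish at $z=0$. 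The $\beta_{k-1}$ block thus leaves no trace on the cut side. Linear independence of the monomials gives $\beta|_p=0$ iff all three coefficients vanish, while $\beta_\cut|_{c(p)}=0$ iff only the first and third do; the implication $\beta_\cut|_{c(p)}\neq 0 \Rightarrow \beta|_p\neq 0$ goes through, but the converse fails whenever $\beta_{k-1}|_{s=0}\neq 0$ and $\beta_k|_{s=0}=\beta_{k-2}|_{s=0}=0$. The ``iff'' you want to read off is not forced.

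Indeed the statement is false in the generality written: on $M=S^1\times[0,\infty)$ take $\beta=ds$, which is invariant with zero (hence basic) pullback to $\del M$, so the hypotheses of Lemma~\ref{cut diff form} hold; yet $\beta_\cut=2u\,du+2v\,dv$ vanishes at the origin of $\C$ while $\beta=ds$ is nonzero at every boundary point. The paper's own proof carries the same slip---its claim that non-vanishing of $\beta_\cut$ at a point is equivalent to non-vanishing of one of $(\beta_k)_\cut$, $(\beta_{k-1})_\cut$, $(\beta_{k-2})_\cut$ fails at $z=0$. The downstream applications do survive because they supply extra structure: in Corollary~\ref{symplectic and contact} the lemma is applied only to the top-degree forms $\beta^n$ and $\beta\wedge(d\beta)^n$, for which $\beta_k$ and $\beta_{k-1}$ vanish by degree count; and in the proof of Lemma~\ref{cut distribution} the transversality of $E$ to $\del M$ forces the pullback of the relevant $\beta$ to $\del M$ to be non-vanishing, i.e.\ $\beta_k|_{s=0}\neq 0$, which trivializes the boundary case. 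A corrected version of the lemma should carry one such hypothesis, or else assert only the implication $\beta_\cut|_{c(p)}\neq 0 \Rightarrow \beta|_p\neq 0$.
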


\begin{proof}
Because 
$c|_{\intM} \colon \intM \to \intM_\cut$
is a diffeomorphism that takes $\beta$ to $\beta_\cut$,
this is true at points of $\intM$.
Near a point in $\del M$,
we may identify $M$ with $D^{n-2} \times S^1 \times [0,\eps)$.
In the expression in coordinates for $\beta$ and $\beta_\cut$
that appear in the proof of Lemma~\ref{cut diff form},
we see that the non-vanishing of $\beta$
at any point of $M$
is equivalent to the non-vanishing of at least one of 
$\beta_k$, $\beta_{k-1}$, or $\beta_{k-2}$ at that point,
and that the non-vanishing of $\beta_\cut$ at any point of $\Mcut$
is equivalent to the non-vanishing of at least one of 
$(\beta_k)_\cut$, $(\beta_{k-1})_\cut$, or $(\beta_{k-2})_\cut$
at that point.
For a point $p$ in $\del M$ and its image $c(p)$ in $\Mcut$,
the coordinates of $p$ are $x,\theta,s$ with $s=0$, 
and the coordinates of $c(p)$ are $x,z$ with the same $x$ 
and with $z=0$.
We finish by noting that $\beta_\ell|_{s=0}$
and $\left.(\beta_\ell)_\cut\right|_{z=0}$ 
are both given by the same expression,
$$ \sum\limits_{\substack{I=(i_1,\ldots,i_\ell)\\i_1<\ldots<i_\ell}}
 b_I(x,0) dx_{i_1} \wedge \ldots \wedge dx_{i_\ell} \, . $$ 
\end{proof}

\begin{Corollary} \labell{symplectic and contact} \
In the setup of Lemma~\ref{cut diff form}, the following holds.
\begin{itemize}
\item
$\beta$ is a symplectic two-form on $M$
iff $\beta_\cut$ is a symplectic two-form on $\Mcut$.
\item
$\beta$ is a contact one-form on $M$ 
iff $\beta_\cut$ is a contact one-form on~$\Mcut$.
\end{itemize}
\end{Corollary}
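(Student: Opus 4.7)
The plan is to reduce both equivalences to the three already-established transfer principles: Lemma~\ref{closed} (closedness is preserved both ways), Lemma~\ref{d and wedge} (cutting commutes with $d$ and $\wedge$), and Lemma~\ref{nonzero} (non-vanishing at $p \in M$ is equivalent to non-vanishing at $c(p) \in \Mcut$). Since the map $c \colon M \to \Mcut$ is surjective, the third of these upgrades to: a form on $M$ is nowhere-vanishing if and only if the cut form is nowhere-vanishing on $\Mcut$. Note that the cutting construction preserves dimension, so $\dim \Mcut = \dim M$, and the top-degree criterion transfers cleanly.

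For the symplectic statement, assume $\dim M = 2n$. Recall that a closed two-form is symplectic precisely when its top wedge power is nowhere-vanishing. By repeated application of Lemma~\ref{d and wedge}, we have $(\beta^n)_\cut = (\beta_\cut)^n$. Combined with Lemma~\ref{nonzero} and the surjectivity of $c$, this shows that $\beta^n$ is nowhere-zero on $M$ if and only if $(\beta_\cut)^n$ is nowhere-zero on $\Mcut$. Together with Lemma~\ref{closed}, we conclude that $\beta$ is closed and non-degenerate on $M$ if and only if $\beta_\cut$ is closed and non-degenerate on $\Mcut$.

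For the contact statement, assume $\dim M = 2n+1$. A one-form $\beta$ is contact exactly when $\beta \wedge (d\beta)^n$ is nowhere-zero. Lemma~\ref{d and wedge} gives $(d\beta)_\cut = d(\beta_\cut)$ and $(\beta \wedge (d\beta)^n)_\cut = \beta_\cut \wedge (d\beta_\cut)^n$. Here I should briefly check that the hypotheses of Lemma~\ref{cut diff form} propagate to $d\beta$, $\beta \wedge (d\beta)^n$, etc.: if $\beta$ is $S^1$-invariant near $\del M$ and basic on $\del M$, then so are $d\beta$ and all wedge products built from $\beta$ and $d\beta$, since the pullback to $\del M$ and the Lie derivative commute with $d$ and $\wedge$ on the appropriate neighborhood. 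Granted this, Lemma~\ref{nonzero} plus surjectivity of $c$ finish the proof.

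The main ``obstacle'' is not really an obstacle but a bookkeeping check: I must verify that the hypotheses of the cutting lemma for differential forms (invariant near $\del M$ and basic on $\del M$) are inherited by $d\beta$ and by wedge powers, so that Lemma~\ref{d and wedge} applies to the relevant expressions. This is straightforward from the fact that exterior derivative and wedge product commute with pullback and with the circle action, but it is the one nontrivial verification that should be mentioned explicitly.
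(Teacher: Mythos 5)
Your proof is correct and takes essentially the same route as the paper: reduce both statements to Lemmas~\ref{closed}, \ref{d and wedge}, and~\ref{nonzero}, applying the non-vanishing criterion to $\beta^n$ in the symplectic case and to $\beta\wedge(d\beta)^n$ in the contact case. Your explicit check that the hypotheses of Lemma~\ref{cut diff form} propagate to $d\beta$ and to wedge products is a correct and worthwhile observation (one small precision: to see that $d$ and $\wedge$ preserve basicness one also needs the interior product identities $\iota_\xi d\alpha = L_\xi\alpha - d\iota_\xi\alpha$ and $\iota_\xi(\alpha\wedge\alpha') = \iota_\xi\alpha\wedge\alpha' \pm \alpha\wedge\iota_\xi\alpha'$, not just commutation with the Lie derivative), but this is already implicit in the statement of Lemma~\ref{d and wedge} and does not change the structure of the argument.
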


\begin{proof}
First,
assume that $M$ has dimension~$2n$ and that $\beta$ is a two-form.
By Lemmas~\ref{d and wedge} and~\ref{nonzero},
$\beta$ is closed on $M$ iff $\beta_\cut$ is closed on $\Mcut$,
and $\beta^n$ is non-vanishing on $M$ 
iff $\beta_\cut^n$ is non-vanishing on $\Mcut$.
This gives the first result.

Next, 
assume that $M$ has dimension $2n+1$ and that $\beta$ is a one-form.
By Lemmas~\ref{d and wedge} and~\ref{nonzero},
$\beta \wedge (d\beta)^n \neq 0$ on $M$
iff $\beta_\cut \wedge (d\beta_\cut)^n \neq 0$ on $\Mcut$.
This gives the second result.
\end{proof}

\section{Relation with reduced forms}
\labell{sec:red}

Let $M$ be a manifold-with-boundary, 
equipped with a free circle action near its boundary, 
let $\Mcut$ be obtained from $M$ by the smooth cutting construction,
and let $c \colon M \to \Mcut$ be the quotient map.
Let $\Mred := c(\del M) = (\del M) / S^1$ in $\Mcut$.
So we have a commuting square
$$ \xymatrix{ \del M \ar[rr]^{\text{inclusion}}\ar[d]_{\text{quotient}}
   && M \ar[d]^{\text{quotient}} \\
   \Mred \ar[rr]^{\text{inclusion}}  && \Mcut \, ,}$$
(in which the map $M \to \Mcut$ is not smooth).


Here is a quick reminder on basic forms.
Let $N$ be a manifold with a free circle action,
generated by a vector field $\xi_N$.
A differential form $\beta_N$ on $N$ is \textbf{basic}
if it is horizontal, which means that $\xi_N \contract N = 0$,
and invariant.  This holds iff $\beta_N$ is the pullback to $N$
of a differential form on $N/S^1$.
In this situation,
the differential form on $N/S^1$ is determined uniquely by $\beta$.

So if $\beta$ is a differential form on $M$ that is basic on $\del M$
and invariant near $\del M$, then there exists a unique differential form
$\beta_\red$ on $\Mred$ whose pullback $\beta_{\del M}$ to $\del M$
coincides with the pullback of $\beta$ to $\del M$.
We summarize this in the following diagram.
$$ \xymatrix{
 (\del M,\beta_{\del M}) 
 \ar[rr]^{\text{inclusion}} \ar[d]_{\text{quotient}} 
 && (M,\beta) \\
 (\Mred,\beta_\red) && 
} $$

In Lemma~\ref{red} we show that $\beta_\red$ coincides
with the pullback of $\beta_\cut$
under the inclusion map $\Mred \to \Mcut$.
This result does not follow immediately from the above 
commuting square,
because the quotient map $M \to \Mcut$ is not smooth.
We summarize this result in the following diagram.
$$ \xymatrix{
 (\del M,\beta_{\del M}) 
 \ar[rr]^{\text{inclusion}} \ar[d]_{\text{quotient}} 
 && (M,\beta) 
 && \ar[ll]_{\text{inclusion}} (\intM,\beta|_{\intM}) \ar[d]^{c|_{\intM}}\\
 (\Mred,\beta_\red) \ar[rr]^{\text{inclusion}} 
 && (\Mcut,\beta_\cut) 
 && \ar[ll]_{\text{inclusion}} (\intM_\cut , \beta_\cut|_{\intM_\cut})
} $$

In Corollary~\ref{symplectic and contact}
we showed that if $\beta$ is a symplectic two-form 
(resp., a contact one-form) on $\Mcut$
then $\beta_\cut$
is a symplectic two-form (resp., a contact one-form) on $\Mcut$.
In Lemmas~\ref{symplectic on red} and~\ref{contact on red}
we will now show that, in these situations,
$\beta_\red$ is a symplectic two-form
(resp., a contact one-form) on $\Mred$.
Thus, in these situations, $\Mred$ is a symplectic (resp., contact)
submanifold of $\Mcut$.

\begin{Lemma} \labell{red}
Let $\beta$ be a differential form on $M$
that is basic on $\del M$ and invariant near $\del M$.
Let $\beta_\red$ and $\beta_\red$
be the induced differential forms on $\Mred$ and $\Mcut$ 
as described above.
Then $\Mred$ coincides with the pullback of $\beta_\cut$
under the inclusion map $\Mred \to \Mcut$.
\end{Lemma}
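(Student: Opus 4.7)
The identity $\beta_\red = i^*\beta_\cut$ is local on $\Mred$, so it suffices to verify it near an arbitrary point of $\Mred$. My plan is to reduce to the local model from Proposition~\ref{prop:local} and Lemma~\ref{open subset}, in which $M = D^{n-2} \times S^1 \times [0,\eps)$ with the circle rotating the middle factor and boundary defining function $f(x,a,s) = s$, while $\Mcut$ is identified via Lemma~\ref{model} with $D^{n-2} \times D^2$ so that $c|_{\intM}$ becomes $(x,e^{i\theta},s) \mapsto (x, e^{i\theta}\sqrt{s})$; under this identification $\Mred$ corresponds to $D^{n-2}\times\{0\}$.

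In this local model I would recycle the Hadamard-type decomposition established in the proof of Lemma~\ref{cut diff form},
$$\beta = \beta_k + \beta_{k-1}\wedge ds + s\hat{\beta}_{k-1}\wedge d\theta + \beta_{k-2}\wedge ds\wedge d\theta,$$
in which each $\beta_\ell,\hat\beta_\ell$ is a sum $\sum_I b_I(x,s)\,dx_{i_1}\wedge\cdots\wedge dx_{i_\ell}$ with coefficients depending only on $(x,s)$, together with the explicit formula for $\beta_\cut$ derived there. I would then compute both sides directly. Pulling $\beta$ back to $\del M = D^{n-2}\times S^1\times\{0\}$ annihilates the two $ds$-terms, and the explicit factor $s$ kills the $s\,d\theta$ term at $s=0$, leaving only $\beta_k|_{s=0}$. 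This form is horizontal and $\theta$-independent, so it descends along the principal $S^1$-bundle $\del M\to\Mred$ to the form $\beta_\red = \sum_I b_I(x,0)\,dx_{i_1}\wedge\cdots\wedge dx_{i_\ell}$ on $\Mred = D^{n-2}$. On the other side, the inclusion $i \colon D^{n-2}\times\{0\}\hookrightarrow D^{n-2}\times D^2$ annihilates $du$ and $dv$, so every term of the formula for $\beta_\cut$ containing $du$ or $dv$ dies under pullback, and only $(\beta_k)_\cut$ at $z=0$ survives, producing the same expression $\sum_I b_I(x,0)\,dx_{i_1}\wedge\cdots\wedge dx_{i_\ell}$. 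Comparing the two yields $\beta_\red = i^*\beta_\cut$ in the chart, and hence globally on $\Mred$.

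I do not anticipate a serious obstacle: once the local model and the four-term decomposition are in place, both operations—the descent from $\del M$ to $\Mred$ and the pullback $i^*$ from $\Mcut$—kill the same three of the four pieces and leave the same restricted value $\beta_k|_{s=0}$ of the surviving piece. The only care required is in the bookkeeping, noting that the coefficients of $(\beta_k)_\cut$ are obtained from those of $\beta_k$ by the substitution $s = |z|^2$, which agrees at the two relevant points $s=0$ and $z=0$. Everything else is a direct reading off of the formulas already displayed in the proof of Lemma~\ref{cut diff form}.
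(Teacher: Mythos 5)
Your argument is correct and coincides with the paper's proof: both pass to the local model $M = D^{n-2}\times S^1\times[0,\eps)$ via Proposition~\ref{prop:local} and Lemma~\ref{open subset}, reuse the four-term Hadamard decomposition and the explicit formula for $\beta_\cut$ from the proof of Lemma~\ref{cut diff form}, and verify that setting $s=0,\ ds=0$ on the $M$ side and $z=0,\ du=dv=0$ on the $\Mcut$ side both leave the same surviving piece $\sum_I b_I(x,0)\,dx_{i_1}\wedge\cdots\wedge dx_{i_k}$. No gaps.
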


\begin{proof}
As in the proof\footnote
{
we violate the principle that one should refer to statements,
not to proofs
}
of Lemma~\ref{cut diff form}, 
we may assume that $M$ is $D^{n-2} \times S^1 \times [0,\eps)$,
with coordinates $x_i$, $\theta$, and $s$.

By setting $s=0$ in the expression in coordinates for $\beta$ 
in the proof of Lemma~\ref{cut diff form},
we obtain that the restriction of $\beta$ to $\del M$ is
\begin{multline*}
 \left.\beta\right|_{\del M} = 
\sum\limits_{\substack{I = (i_1,\ldots,i_k) \\ i_1 < \ldots < i_k }} 
            b_I(x,0) dx_{i_1} \wedge \ldots \wedge dx_{i_k}  \\
 + \beta_{k-1}|_{s=0} \wedge ds
 + \beta_{k-2}|_{s=0} \wedge ds \wedge d\theta .
\end{multline*}
Further setting $ds=0$, we obtain that the pullback of $\beta$
to $\del M$ is
$$ \beta_{\del M} = 
\sum\limits_{\substack{I = (i_1,\ldots,i_k) \\ i_1 < \ldots < i_k }} 
            b_I(x,0) dx_{i_1} \wedge \ldots \wedge dx_{i_k}  $$
as a differential form on $D^{n-2} \times S^1 \times \{ 0 \}$.
Identifying $\Mred$ with $D^{n-2}$, we obtain that
$$ \beta_\red = 
\sum\limits_{\substack{I = (i_1,\ldots,i_k) \\ i_1 < \ldots < i_k }} 
            b_I(x,0) dx_{i_1} \wedge \ldots \wedge dx_{i_k}  $$
as a differential form on $D^{n-2}$.

By setting $z=u+iv=0$ in the expression in coordinates for $\beta_\cut$
in the proof of Lemma~\ref{cut diff form},
we obtain that the restriction of $\beta_\cut$ to $\Mred$ is
$$ \left.\beta_\cut\right|_{\Mred}
 = \sum\limits_{\substack{I = (i_1,\ldots,i_k)
 \\ i_1 < \ldots < i_k }} 
            b_I(x,0) dx_{i_1} \wedge \ldots \wedge dx_{i_k} 
 + (\beta_{k-2})_\cut \wedge (2du \wedge dv). $$

Further setting $du=dv=0$, we obtain that the pullback of $\beta_\cut$
to $\Mred$ coincides with $\beta_\red$, as required.
\end{proof}

We now specialize to the context 
of symplectic cutting and contact cutting.
In Corollary~\ref{symplectic and contact}
we showed that, in the setting of Lemma~\ref{cut diff form}, 
if $\beta$ is a symplectic two-form (resp., contact one-form) on $M$ 
then $\beta_\cut$ is a symplectic two-form (resp., contact one-form)
on $\Mcut$.
In Lemmas~\ref{symplectic on red} and~\ref{contact on red}
we will show that, in this setting,
if $\beta$ is a symplectic two-form (resp., contact one-form) on $M$ 
then $\beta_\red$ is a symplectic two-form 
(resp., contact one-form) on $\Mred$. 

\begin{Lemma} \labell{symplectic on red}
Let $\beta$ be a symplectic two-form on $M$
that is basic on~$\del M$ and invariant near~$\del M$.
Then $\beta_\red$ is a symplectic two-form on~$\Mred$.
\end{Lemma}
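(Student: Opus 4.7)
The plan is to combine Lemma~\ref{red} with Corollary~\ref{symplectic and contact} and reduce the statement to a local computation. By Corollary~\ref{symplectic and contact}, $\beta_\cut$ is symplectic on $\Mcut$; by Lemma~\ref{red}, $\beta_\red$ is the pullback of $\beta_\cut$ under the closed embedding $\Mred \hookrightarrow \Mcut$. In particular $\beta_\red$ is closed, since pullback commutes with~$d$. The task reduces to showing that $\beta_\red^{n-1}$ is nowhere vanishing, where $\dim M = 2n$ (so $\dim \Mred = 2n-2$); equivalently, that $\Mred$ is a symplectic submanifold of $(\Mcut, \beta_\cut)$.

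For non-degeneracy, I would argue locally, as in the proofs of Proposition~\ref{prop:local}, Lemma~\ref{cut diff form}, and Lemma~\ref{red}. Near any point of $\Mred$, identify $M$ with $D^{2n-2} \times S^1 \times [0,\eps)$ and $\Mcut$ with $D^{2n-2} \times D^2$, so that $\Mred$ corresponds to $D^{2n-2} \times \{0\}$. The local coordinate expression for $\beta_\cut$ derived in the proof of Lemma~\ref{cut diff form} reads
\begin{align*}
\beta_\cut &= (\beta_2)_\cut + (\beta_1)_\cut \wedge (2u\, du + 2v\, dv) \\
&\qquad + (\hat\beta_1)_\cut \wedge (u\, dv - v\, du) + (\beta_0)_\cut \cdot 2\, du \wedge dv.
\end{align*}
Setting $u=v=0$ kills all terms except the first and the last, giving
$$\beta_\cut|_{z=0} \;=\; \beta_2|_{s=0} + 2\beta_0|_{s=0}\, du \wedge dv,$$
whose further pullback to $\Mred$ is $\beta_2|_{s=0}$, in agreement with Lemma~\ref{red}.

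Raising this to the $n$-th power, using $(du \wedge dv)^2 = 0$ together with the fact that $(\beta_2|_{s=0})^n$ is a $2n$-form in the $2n-2$ coordinates $x_1,\ldots,x_{2n-2}$ and therefore vanishes, I would obtain
$$\beta_\cut^n|_{z=0} \;=\; 2n\, \beta_0|_{s=0} \cdot (\beta_2|_{s=0})^{n-1} \wedge du \wedge dv.$$
Since $\beta_\cut$ is symplectic on $\Mcut$, the left-hand side is nowhere zero on $\Mred$; hence $\beta_\red^{n-1} = (\beta_2|_{s=0})^{n-1}$ is nowhere zero on this chart. Since such charts cover $\Mred$, the form $\beta_\red$ is non-degenerate on all of $\Mred$, completing the proof. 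The only nontrivial point is the dimension-counting bookkeeping that eliminates the cross terms in $\beta_\cut^n|_{z=0}$; once this is set up, the conclusion is immediate, and the computation also shows as a by-product that $\beta_0|_{s=0}$ is nowhere zero along $\Mred$.
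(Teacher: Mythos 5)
Your proof is correct. The route differs from the paper's in a minor but genuine way: you work on the cut side, using Corollary~\ref{symplectic and contact} to know $\beta_\cut$ is symplectic and then expanding $\beta_\cut^n|_{z=0}$ in the chart $D^{2n-2}\times D^2$, whereas the paper works entirely upstairs on $M$, expanding $\beta^n|_{\del M}$ in the chart $D^{2n-2}\times S^1\times[0,\eps)$ and using only that $\beta$ itself is symplectic. The two local computations are mirror images of each other (your $2\,du\wedge dv$ plays the role of the paper's $ds\wedge d\theta$, and the same dimension count on the $2n-2$ variables $x_i$ kills the $(\beta_2|_{s=0})^n$ term in both), so the computational content is the same; what you pay for going through $\Mcut$ is a dependence on Corollary~\ref{symplectic and contact} (hence on Lemma~\ref{nonzero}) that the paper's direct argument avoids, while what you gain is that the non-degeneracy statement is read off immediately as ``$\Mred$ is a symplectic submanifold of $(\Mcut,\beta_\cut)$,'' which is a cleaner way to phrase the conclusion. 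Your observation that $\beta_0|_{s=0}$ is nowhere zero is also present in the paper's version, where it appears as the coefficient of $ds\wedge d\theta$ in $\beta^n|_{\del M}$.
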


\begin{proof}
Because $\beta$ is closed, its pullback $\beta_{\del M}$ to $\del M$ 
is closed, and so $\beta_\red$ is closed.
It remains to prove that $\beta_\red$ is non-degenerate.

Let $2n = \dim M$.

As in the proofs of Lemmas~\ref{cut diff form}
and~\ref{red},
we may assume that $M = D^{2n-2} \times S^1 \times [0,\eps)$
and $\Mred = D^{2n-2}$, and we obtain the expressions 
\begin{multline*}
 \beta \, = \, \sum\limits_{i<j} b_{i,j} (x,s) dx_i \wedge dx_j \\
 \, + \, \sum_i b_i(x,s) dx_i \wedge ds 
 \, + \, s \sum_i \wh{b}_i(x,s) dx_i \wedge d\theta
 \, + \, b_0(x,s) ds \wedge d\theta  
\end{multline*}
and 
$$ \beta_\red = \sum_{i<j} b_{i,j}(x,0) dx_i \wedge dx_j \, ,$$
where $b_{i,j}$, $b_i$, $\wh{b}_i$, and $b_0$
are smooth functions on $D^{2n-2} \times [0,\eps)$.
By setting $s=0$ in the expression for $\beta$, we obtain
that the restriction of $\beta$ to $\del M$ is
\begin{multline*}
\beta|_{\del M} 
 \, = \, \sum\limits_{i<j} b_{i,j} (x,0) dx_i \wedge dx_j \\
    \, + \, \sum_i b_i(x,0) dx_i \wedge ds
    \, + \, b_0(x,0) ds \wedge d\theta \, .
\end{multline*}
Taking the $n$th wedge
and noting that the number of $x$-coordinates is $2n-2$, 
and then comparing with the expression for $\beta_\red$, we obtain
\begin{align*}
\beta^n|_{\del M} 
 & = n \Big(\sum\limits_{i<j} b_{i,j} (x,0) dx_i \wedge dx_j\Big)^{n-1}
   \wedge \Big( b_0(x,0) ds \wedge d\theta \Big) \\
 & = n \Big( \pi^* \beta_\red \Big)^{n-1}
   \wedge \Big( b_0(x,0) ds \wedge d\theta \Big) 
\end{align*}
where $\pi \colon D^{2n-2} \times S^1 \times [0,\eps) \to D^{2n-2}$
is the projection map.
The non-vanishing of this form along $\del M$ implies 
that $\beta_\red^{n-1}$ is non-vanishing
and hence that $\beta_\red$ is non-degenerate.
\end{proof}

\begin{Lemma} \labell{contact on red}
Let $\beta$ be a contact one-form on~$M$
that is basic on~$\del M$ and invariant near~$\del M$.
Then $\beta_\red$ is a contact one-form on~$\Mred$.
\end{Lemma}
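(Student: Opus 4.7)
The plan is to adapt the argument of Lemma~\ref{symplectic on red} to the contact setting. Write $\dim M = 2n+1$, so that $\dim\Mred = 2n-1$, and pass to the local model used in the proofs of Lemmas~\ref{cut diff form}, \ref{red}, and~\ref{symplectic on red}: identify $M$ near a boundary point with $D^{2n-1}\times S^1\times[0,\eps)$, with coordinates $x=(x_1,\ldots,x_{2n-1})$, $\theta$, $s$, and identify $\Mred$ with $D^{2n-1}$. Invariance of $\beta$ near $\del M$, together with Hadamard's lemma applied to the $d\theta$ component (which vanishes on $\del M$ by basicness), should let me write
$$ \beta \,=\, \sum_i b_i(x,s)\,dx_i \,+\, b_0(x,s)\,ds \,+\, s\,\hat{c}(x,s)\,d\theta, $$
from which $\beta_\red = \sum_i b_i(x,0)\,dx_i$ on $D^{2n-1}$, exactly as in Lemma~\ref{red}.

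Next, I would compute $\beta\wedge(d\beta)^n$ along $\del M$ and express it in terms of $\beta_\red\wedge(d\beta_\red)^{n-1}$. To organize the bookkeeping, decompose $d\beta|_{s=0} = \alpha + \gamma + \delta$, where $\alpha = \pi^* d\beta_\red$ involves only $dx_i$'s (with $\pi\colon D^{2n-1}\times S^1\times[0,\eps)\to D^{2n-1}$ the projection), $\gamma$ is the sum of the remaining terms that contain exactly one $ds$ but no $d\theta$, and $\delta = \hat{c}(x,0)\,ds\wedge d\theta$. The key combinatorial observations are that $\gamma\wedge\gamma = \gamma\wedge\delta = \delta\wedge\delta = 0$ (each of $\gamma,\delta$ carries a $ds$), and $\alpha^n = 0$ (a $2$-form in only $2n-1$ variables cannot reach top power $n$), so $(d\beta)^n|_{s=0}$ collapses to $n\,\alpha^{n-1}\wedge\gamma + n\,\alpha^{n-1}\wedge\delta$.

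The payoff comes from wedging with $\beta|_{s=0} = \sum_i b_i(x,0)\,dx_i + b_0(x,0)\,ds$. I expect the $\gamma$-term to drop out entirely: any $dx_j$ factor from $\beta|_{s=0}$ collides with the $2n-1$ independent $dx$'s already present in $\alpha^{n-1}\wedge\gamma$, while the $b_0\,ds$ factor doubles up with the $ds$ in $\gamma$. Only the $\delta$-term survives, and should yield
$$ \beta\wedge(d\beta)^n\big|_{s=0} \;=\; n\,\hat{c}(x,0)\,\pi^*\!\bigl(\beta_\red\wedge(d\beta_\red)^{n-1}\bigr)\wedge ds\wedge d\theta. $$
Since $\beta$ is contact, the left-hand side is nowhere zero along $\del M$, and because $ds\wedge d\theta$ is independent of the $dx_i$'s this forces $\beta_\red\wedge(d\beta_\red)^{n-1}$ to be nowhere zero on $\Mred$, giving the conclusion.

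The main obstacle is the wedge bookkeeping in the previous paragraph---in particular, verifying carefully that the $\alpha^{n-1}\wedge\gamma$ contribution really does annihilate against every monomial of $\beta|_{s=0}$. Conceptually this is just the contact analogue of the top-form computation already carried out in Lemma~\ref{symplectic on red}, so once the normal form is in hand the argument should be essentially forced.
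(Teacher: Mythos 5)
Your proposal is correct and follows essentially the same route as the paper: same local model, same Hadamard-type normal form $\beta = \sum_i b_i\,dx_i + b_0\,ds + s\hat{c}\,d\theta$, same observation that $\alpha^n = 0$ and that the $ds$-carrying pieces of $d\beta|_{s=0}$ square to zero, and the same final identification of $\beta\wedge(d\beta)^n|_{\del M}$ with a nonzero multiple of $\pi^*(\beta_\red\wedge(d\beta_\red)^{n-1})\wedge ds\wedge d\theta$. The only difference is cosmetic: you split the $ds$-part of $d\beta|_{s=0}$ into $\gamma$ (no $d\theta$) and $\delta$ (the $ds\wedge d\theta$ term), whereas the paper keeps them bundled as a single $\gamma\wedge ds$ and extracts the $d\theta$-component only implicitly in the last step; your bookkeeping concern about $\alpha^{n-1}\wedge\gamma$ resolves exactly as you expect, since $\alpha^{n-1}\wedge\gamma$ already exhausts all $2n-1$ of the $dx_i$'s and carries a $ds$, so both the $\sum b_i\,dx_i$ and $b_0\,ds$ pieces of $\beta|_{s=0}$ kill it.
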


\begin{proof}
Let $2n+1 = \dim M$.

As in the proofs of Lemmas~\ref{cut diff form} and~\ref{red},
we may assume that $M = D^{2n-1} \times S^1 \times [0,\eps)$
and $M_\red = D^{2n-1}$,
and we obtain the expressions
$$ \beta \, = \, \sum_i b_i(x,s) dx_i 
              \, + \, b_0(x,s) ds \, + \, s\wh{b}_0(x,s) d\theta $$
and
$$ \beta_\red \, = \, \sum_i b_i(x,0) dx_i .$$
From the expression for $\beta$ we get
\begin{multline*}
d\beta = 
 \sum_{i,j} \frac{\del b_i}{\del x_j}(x,s) dx_j \wedge dx_i 
   \, + \, \sum_i \frac{\del b_i}{\del s}(x,s) ds \wedge dx_i \\
   \, + \, \sum_i \frac{\del b_0}{\del x_i}(x,s) dx_i \wedge ds 
   \, + \, \wh{b}_0(x,0) ds \wedge d\theta 
   \, + \, s \eta
\end{multline*}
for some two-form $\eta$ on $D^{2n-1} \times S^1 \times [0,\eps)$.
By setting $s=0$ in the expressions for $\beta$ and for $d\beta$,
we obtain that their restrictions to $\del M$ are
$$ \beta|_{\del M} \, = \, \sum_i b_i(x,0) dx_i \, + \, b_0(x,0) ds $$
and
$$
 (d \beta)|_{\del M} \, = \, 
 \sum_{i,j} \frac{\del b_i}{\del x_j}(x,0) dx_j \wedge dx_i
 + \gamma \wedge ds \, 
$$
for some one-form $\gamma$ on $D^{2n-1} \times S^1 \times [0,\eps)$
along $\{s=0\}$.
Taking the $n$th wedge and noting that the number of $x$-coordinates
is~$2n-1$, 
we obtain
$$
(d\beta)^n|_{\del M} 
 \, = \, n \Big( \sum_{i,j} \frac{\del b_i}{\del x_j}(x,0) 
    dx_j \wedge dx_i \Big)^{n-1}
\wedge \gamma \wedge ds \, .
$$
Taking the wedge of the expression for $\beta|_{\del M}$
and the expression for $(d\beta)^n|_{\del M}$,
and then comparing with the expression for $\beta_\red$, we obtain
\begin{multline*}
\beta \wedge (d\beta)^n|_{\del M}  \\
   \, = \, n \Big( \sum_i b_i(x,0) dx_i \Big) 
   \wedge \Big( 
   \sum_{i,j} \frac{\del b_i}{\del x_j}(x,0) dx_j \wedge dx_i \Big)^{n-1}
 \wedge \gamma \wedge ds   \\
   \, = \, n \Big( \pi^* \beta_\red \Big) 
       \wedge \Big( \pi^* d \beta_\red \Big)^{n-1} 
 \wedge \gamma \wedge ds 
\end{multline*}
where $\pi \colon D^{2n-1} \times S^1 \times [0,\eps) \to D^{2n-1}$
is the projection map.
The non-vanishing of this form along $\del M$ implies that
$\beta_\red \wedge (d\beta_\red)^{n-1}$ is nonvanishing
and hence that $\beta_\red$ is a contact one-form.
\end{proof}

\section{Relation with symplectic cutting}
\labell{sec:symplectic cutting}

We now relate the smooth cutting procedure with differential forms
to the classical version of Lerman's symplectic cutting procedure.

Let $\wt{M}$ be a manifold equipped with a circle action
and with an invariant invariant two-form $\omega$.
Let $\mu \colon M \to \R$ be a corresponding momentum map;
this means that 
$$ \xi_M \contract \omega = - d\mu ,$$
where $\xi_M$ is the vector field that generates the circle action.
It implies that $\mu$ is invariant.
Suppose that the circle action on the zero level set $\mu^{-1}(\{0\})$
is free.
The level set $\mu^{-1}(\{0\})$ is regular
iff $\xi_M$ is not in the null-space of $\omega$ 
at any point of $\mu^{-1}(\{0\})$.
Assume that this holds;
note that it always holds if $\omega$ is non-degenerate. 
Then
$$ M := \mu^{-1}([0,\infty))$$
is a submanifold-with-boundary of $\wt{M}$,
its boundary is $\del M = \mu^{-1}(\{0\})$,
and $\mu|_M$ is an invariant boundary defining function.

Let $\Mcut$ be obtained from $M$ by the smooth cutting construction.
The equation $ \xi_M \contract \omega = - d\mu $
implies that the pullback of $\omega$ to $\del M$ is basic.
By Lemmas~\ref{cut diff form} and~\ref{closed},
$\omega$ induces a closed two-form $\omega_\cut$ on $\Mcut$.
By Corollary~\ref{symplectic and contact},
if $\omega$ is symplectic, so is $\omega_\cut$.

Suppose that a Lie group $G$ acts smoothly on $M$, 
commutes with the circle action, and preserves $\omega$.
By functoriality, the action descends to a smooth $G$ action on $\Mcut$.
This action preserves $\omega_\cut$ 
on $\intM_\cut$, hence (by continuity) everywhere.

Suppose now that the $G$ action on $M$ is Hamiltonian 
with momentum map $\mu_G$.
Because the $G$ action preserves $\omega$, commutes with the $S^1$ action,
and preserves the zero level set of $\mu$, it preserves $\mu$ near $\del M$;
this implies that the $S^1$ action preserves $\mu_G$ near $\del M$.
By Lemma~\ref{cut diff form},
$\mu_G$ descends to a smooth map $(\mu_G)_\cut$ on $\Mcut$.
The momentum map equation for $G$
holds on $\intM_\cut$ and hence (by continuity) everywhere.
So $(\mu_G)_\cut$ is a momentum map for the $G$ action
on $(\Mcut,\omega_\cut)$.

{
\section{Relation with contact cutting}
\labell{sec:contact cutting}

We now relate the smooth cutting procedure with differential forms
to the classical version of Lerman's contact cutting procedure.
We give here a version that should be useful for our paper-in-progress 
\cite{chiang-karshon}.

Let $\wt{M}$ be a manifold equipped with a circle action
and with an invariant contact one-form $\beta$.
Consider the corresponding momentum map,
$$ \mu := \xi_M \contract \beta \colon M \to \R .$$ 
Assume that the circle action is free 
on the zero level set $\mu^{-1}(\{0\})$.
In particular, $\xi_M$ is non-vanishing along $\mu^{-1}(\{0\})$.
Because $\xi_M$ is in $\ker \beta$ along $\mu^{-1}(\{0\})$,
and because restriction of $d\beta$ to $\ker \beta$ is non-degenerate
(because $\beta$ is a contact one-form),
it follows that 
$\xi_M \contract d\beta$ is non-vanishing along $\mu^{-1}(\{0\})$.
But $\xi_M \contract d\beta = - d \xi_M \contract \beta = - d\mu$,
so the level set $\mu^{-1}(\{0\})$ is regular,
$$ M := \mu^{-1}([0,\infty)) $$
is a submanifold-with-boundary of $\wt{M}$,
its boundary is $\del M = \mu^{-1}(\{0\})$,
and $\mu|_M$ is an invariant boundary defining function on $M$.

More generally, let $M$ be a manifold with boundary,
equipped with a free circle action near the boundary $\del M$,
and let $\beta$ be a contact one-form on $M$
that is invariant near $\del M$.
Assume that the corresponding momentum map
$$ \mu := \xi_M \contract \beta \colon M \to \R $$ 
vanishes along $\del M$.
Then $\mu$ is an invariant boundary defining function near $\del M$.

Let $\Mcut$ be obtained from $M$ by the smooth cutting construction.
Because $\xi_M \contract \beta = \mu$ vanishes along $\del M$
and is $S^1$ invariant near $\del M$,
the differential form $\beta$ is basic on $\del M$.
By Lemma~\ref{cut diff form} and Corollary~\ref{symplectic and contact},
$\beta$ induces a contact one-form $\beta_\cut$ on $\Mcut$.
By Lemmas~\ref{red} and~\ref{contact on red},
the pullback of $\beta_\cut$ 
under the inclusion map $\Mred \to \Mcut$
is the contact one-form on $\Mred$,
so $\Mred$ is a contact submanifold of $\Mcut$.

%

\ynote{Where do we differ from Eugene}


}

\section{Cutting with distributions}
\labell{sec:distributions}

We begin with a quick reminder on distributions.

A \textbf{distribution} on a manifold $N$
is a sub-bundle $E$ of the tangent bundle $TN$.
Lie brackets of vector fields determines a $TN/E$-valued two-form on $E$,
which we write as
$$ \Omega \colon E \times E \to TN/E :$$
for any two vector fields $u,v$ with values in $E$,
we have the equality $[u,v] + E = \Omega(u,v)$ of sections of $TN/E$.
The distribution is \textbf{involutive} if this two-form is zero;
it's \textbf{contact} if $E$ is of codimension one
and this two-form is non-degenerate.

A subset $E$ of $TN$ is a codimension-$k$ distribution 
iff can locally be written as the null-space 
of a non-vanishing decomposable $k$-form $\beta$.
(Namely, $E = \{ v \in TM \ | \ v \contract \beta = 0 \}$, \, 
$\beta = \beta_1 \wedge \ldots \wedge \beta_k$ for some one-forms $\beta_j$,
and $\beta \neq 0$.)
A subset $E$ of $TN$ is a contact distribution
iff $\dim N$ is odd, say, $\dim N = 2n+1$,
and $E$ can locally be written as the null-space 
of a one-form $\beta$ such that $\beta \wedge (d\beta)^n\neq 0$.
If this holds, then $\beta \wedge (d\beta)^n \neq 0$
for every 
local one-form $\beta$ whose null-space is $E$

\begin{Lemma}\labell{cut distribution}
Let $M$ be a manifold-with-boundary, with a free circle action
on a neighbourhood $U_M$ of the boundary $\del M$.
Let $\Mcut$ be obtained from $M$ by the smooth cutting construction,
and let $c \colon M \to \Mcut$ be the quotient map.
Let $E$ be a distribution on $M$.
Assume that $E$ contains the tangents to the $S^1$ orbits along $\del M$,
is $S^1$-invariant near $\del M$, and is transverse to $\del M$.  
Then there exists a unique distribution $\Ecut$ on $\Mcut$
whose preimage under $c|_{\intM} \colon \intM \to (\intM)_\cut$
is $E|_{\intM}$. 
Moreover,
%
%
\begin{itemize}
\item $E$ is involutive iff $\Ecut$ is involutive; and
\item $E$ is a contact distribution iff $\Ecut$ is a contact distribution.
\end{itemize}
\end{Lemma}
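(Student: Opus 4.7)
The plan is to realize $\Ecut$ locally as the common kernel of the cut forms $(\beta_j)_\cut$ of an invariant local coframe $\beta_1,\ldots,\beta_k$ of $E^\perp$, and then to propagate involutivity and the contact condition via the compatibility of cutting with exterior derivative and wedge product (Lemma~\ref{d and wedge}). Uniqueness is immediate: $\intM_\cut$ is open and dense in $\Mcut$ and $c|_{\intM}$ is a diffeomorphism, so any smooth subbundle of $T\Mcut$ restricting on $\intM_\cut$ to the image of $E|_{\intM}$ is uniquely determined. Hence it suffices to construct the extension locally near each point of $\Mred$.

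For existence, Proposition~\ref{prop:local} and Lemma~\ref{open subset} reduce us to the local model $M = D^{n-2}\times S^1\times[0,\eps)$ and $\Mcut = D^{n-2}\times D^2$ with $z=\sqrt{s}e^{i\theta}$; set $k:=\codim E$. Since $E^\perp$ is $S^1$-invariant near $\del M$, we may choose a local coframe $\beta_1,\ldots,\beta_k$ of $E^\perp$ consisting of invariant one-forms. Because $E$ contains $\partial_\theta$ along $\del M$, each $\beta_j(\partial_\theta)$ vanishes on $\del M$, so $\beta_j$ is basic on $\del M$, and Lemma~\ref{cut diff form} produces $(\beta_j)_\cut$ on $\Mcut$. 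By Lemma~\ref{red}, the pullback of $(\beta_j)_\cut$ to $\Mred$ is the reduced form $(\beta_j)_\red$. Transversality of $E$ to $\del M$ is equivalent to the restriction map $E^\perp|_{\del M}\to T^*\del M$ being injective, which---after descending through the free $S^1$-action on $\del M$---is equivalent to $(\beta_1)_\red,\ldots,(\beta_k)_\red$ being pointwise linearly independent on $\Mred$. Hence $(\beta_j)_\cut$ are linearly independent on $\Mred$ (by transversality) and on $\intM_\cut$ (by transport from the $\beta_j$ via $c|_{\intM}$), hence on all of $\Mcut$. Setting $\Ecut:=\bigcap_j \ker(\beta_j)_\cut$ gives a smooth codimension-$k$ distribution on $\Mcut$ with the required pullback on $\intM$; local pieces glue by uniqueness.

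For involutivity, the Frobenius condition $d\beta_j\wedge\beta_1\wedge\cdots\wedge\beta_k=0$ on $M$ cuts, by Lemma~\ref{d and wedge}, to $d(\beta_j)_\cut\wedge(\beta_1)_\cut\wedge\cdots\wedge(\beta_k)_\cut=0$ on $\intM_\cut$; by continuity and density the latter is equivalent to the same identity on $\Mcut$, hence to involutivity of $\Ecut$. For the contact case ($k=1$, $\beta:=\beta_1$, $\dim M = 2\ell+1$), the condition is non-vanishing of the top form $\gamma:=\beta\wedge(d\beta)^\ell$, and by Lemma~\ref{d and wedge}, $\gamma_\cut=(\beta)_\cut\wedge(d(\beta)_\cut)^\ell$. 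Dimension counting in the local model forces the decomposition of Lemma~\ref{cut diff form} applied to $\gamma$ to collapse to $\gamma=g(x,s)\,dx_1\wedge\cdots\wedge dx_{n-2}\wedge ds\wedge d\theta$, and correspondingly $\gamma_\cut=g(x,|z|^2)\,dx_1\wedge\cdots\wedge dx_{n-2}\wedge 2\,du\wedge dv$; thus $\gamma$ is non-vanishing at a point of $\del M$ iff $\gamma_\cut$ is non-vanishing at the corresponding point of $\Mred$, and together with the diffeomorphism on the interiors this gives $E$ contact iff $\Ecut$ contact.

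The main obstacle is the transversality step in the second paragraph: verifying that the $(\beta_j)_\cut$ remain linearly independent on $\Mred$. The key is Lemma~\ref{red}, which identifies their pullbacks to $\Mred$ with the reduced forms $(\beta_j)_\red$ and so converts the question into the linear independence of these reductions, which in turn matches the transversality hypothesis via the injectivity of $E^\perp|_{\del M}\to T^*\del M$. Once this identification is in place, involutivity and the contact condition follow formally from Lemma~\ref{d and wedge} together with a brief local computation on top forms.
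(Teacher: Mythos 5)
Your proof is correct, and the overall strategy agrees with the paper's: realize $\Ecut$ locally near $\Mred$ as the common kernel of the cut forms $(\beta_j)_\cut$ coming from an invariant local coframe $\beta_1,\ldots,\beta_k$ of $E^\circ$, check these cut forms remain pointwise independent, and deduce involutivity and contactness. The step where you diverge is the linear-independence check at points of $\Mred$: you route it through Lemma~\ref{red}, pulling back the $(\beta_j)_\cut$ to $\Mred$, identifying these pullbacks with the reduced forms $(\beta_j)_\red$, and arguing their independence from the transversality hypothesis via injectivity of $E^\circ|_{\del M}\to T^*\del M$. The paper instead invokes Lemma~\ref{nonzero} applied to the top wedge $\beta_1\wedge\cdots\wedge\beta_k$ (together with Lemma~\ref{d and wedge}): since the $\beta_j$ are built to be pointwise independent on $M$, the wedge is non-vanishing, hence $(\beta_1)_\cut\wedge\cdots\wedge(\beta_k)_\cut$ is non-vanishing, which is exactly independence on $\Mcut$. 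The paper's route is shorter and avoids Lemma~\ref{red} entirely; yours has the advantage of making the role of the transversality hypothesis explicit (it is what guarantees the coframe restricts injectively to $T^*\del M$). Two smaller differences: you assert the existence of an invariant local coframe of $E^\circ$ (true, but the paper constructs one explicitly by orthogonal projection against a metric that is standard in the local coordinates, which also shows at once that the one-forms are basic on $\del M$); and for the contact case you recompute the top form $\beta\wedge(d\beta)^\ell$ in the local model, whereas the paper simply cites Corollary~\ref{symplectic and contact} for the $k=1$ case. Both variations are sound.
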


\begin{center}
\begin{figure}[h]
\includegraphics[scale=.75]{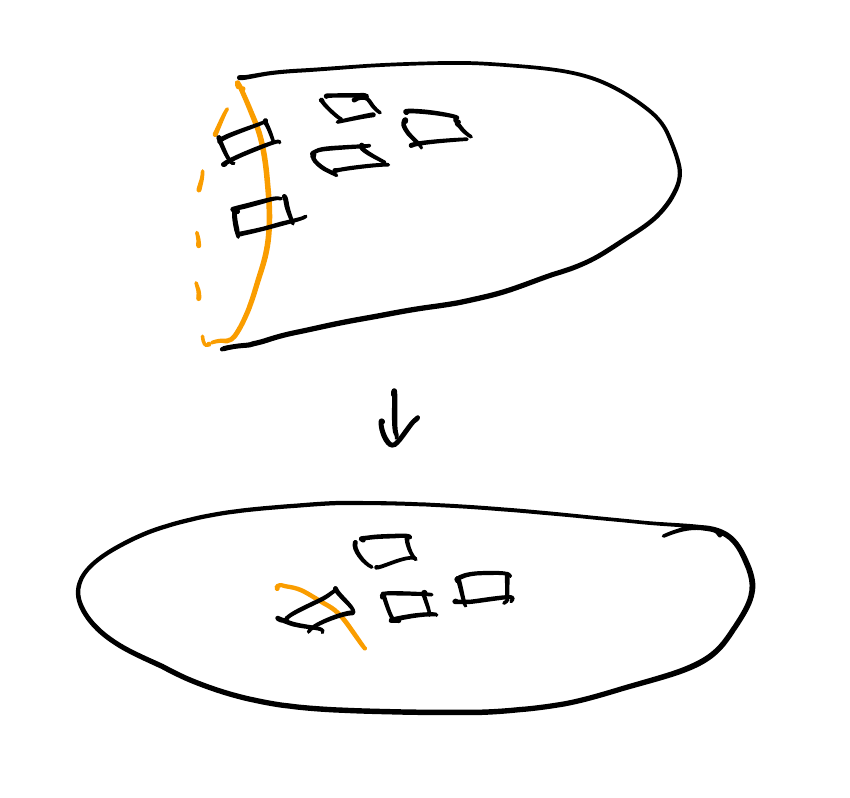}
\caption{Cutting with distributions}
\end{figure}
\end{center}

We resist our temptation to define $\Ecut$
to be the preimage of $E$ under the differential
of the quotient map $c \colon M \to \Mcut$:
this map $c$ is not smooth.

Here is a proof of the lemma.

\begin{proof}[Proof of Lemma~\ref{cut distribution}] \ 

Uniqueness follows from continuity.
So does the statement about involutivity.

It is enough to check that a distribution $\Ecut$ 
with the required properties exists locally near each point of $\Mcut$:
by continuity, such local distributions 
patch together into a distribution on $\Mcut$ as required.
Because $c$ restricts to a diffeomorphism from $\intM$ to $(\intM)_\cut$,
it is enough to consider neighbourhoods of points 
in $\Mred := (\del M)_\cut \, ( = (\del M)/S^1) $.

Assume, first, that $E$ has codimension one.
The assumptions on $E$ imply that near each orbit in $\del M$
we can write $E = \ker \beta$
where $\beta$ is an $S^1$-invariant one-form
whose pulback to $\del M$ is $S^1$-basic and non-vanishing.
By Lemmas~\ref{cut diff form} and~\ref{nonzero},
we obtain a differential form $\beta_\cut$ on $\Mcut$
such that $E_\cut := \ker \beta_\cut$ is a distribution 
whose preimage under $c|_{\intM}$ is $E|_{\intM}$.

By Corollary~\ref{symplectic and contact},
$\beta$ is a contact form on $M$
iff $\beta_\cut$ is a contact form on $\Mcut$.
So $E$ is a contact distribution on $M$
iff $E_\cut$ is a contact distribution on $\Mcut$.

\bigskip

Now, let $E$ be a distribution of any codimension.
Let $k$ be the codimension of the distribution $E$.

The assumptions on $E$ imply that $E \cap T \del M$
is a codimension $k$ distribution on $\del M$ 
that is $S^1$-invariant and that contains the tangents to the $S^1$ orbits
on $\del M$.  
So it determines a distribution $\Ered$ on $\Mred$
such that $E \cap T \del M$ is the preimage of $\Ered$
under the differential of the quotient map $\del M \to \Mred$.

Fix a point $p \in \del M$.
By Proposition~\ref{prop:local}, we may assume that 
$$ M = D^{n-2} \times S^1 \times [0,\eps) ,$$
with coordinates $x_i$ on $D^{n-2}$, $\theta \mod 2\pi$ on $S^1$,
and $s$ on $[0,\eps)$,
that $E$ contains the vector field $\del/\del s$,
and that the point $p$ is given by $x=0$, $\theta=0$, and $s=0$.
Identify $\Mred$ with the disc $D^{n-2}$.

There exists $S^1$-invariant 
differential one-forms $\beta_1,\ldots,\beta_k$ on $M$
whose pullbacks to $\del M$ are basic,
whose null-space contains $E$,
and such that the $k$-form $\beta:= \beta_1 \wedge \ldots \wedge \beta_k$
is non-vanishing (at $p$, hence) near $p$.
Shrinking $D^{n-2}$ and $\eps$,
we may assume that $\beta$ is non-vanishing everywhere.

(Here is how to obtain such one-forms.
Equip $M$ with the Riemannian metric that is standard
in the coordinates $x$, $\theta$, $s$.
Let $(\eta_1)_p , \ldots , (\eta_k)_p$
be a basis to the orthocomplement of $E|_p$ in $T M$. 
Because $E$ contains $\del/\del s$,
the $(\eta_j)_p$ are tangent to $\del M$.
Extend each $(\eta_j)_p$
to a vector field on $M$ with constant coefficients
with respect to our coordinates,
and project to the orthocomplement of $E$ in $TM$.
We obtain $S^1$-invariant vector fields $\eta_1,\ldots,\eta_k$
that are everywhere orthogonal to $E$
and that at $p$ are a basis to the orthocomplement of $E|_p$.
For each $j$, inner product with $\eta_j$ defines a one-form $\beta_j$.
These one-forms have the required properties.)

Lemmas~\ref{cut diff form}, \ref{d and wedge}, and \ref{nonzero}
then give one-forms $(\beta_j)_\cut$ on $\Mcut$
such that $\beta_\cut = (\beta_1)_\cut \wedge \ldots \wedge (\beta_k)_\cut$
is non-vanishing.
Let $\Ecut$ be the null-space of $\beta_\cut$.
Then $\Ecut$ is a codimension-$k$ distribution on $\Mcut$
whose preimage under $c|_{\intM}$ is $E|_{\intM}$.

\end{proof}

\section{Diffeomorphisms in symplectic polar coordinates}
\labell{sec:symplectic polar:diffeos}

In this section, which can be read independently of the others,
we describe a baby-example
that motivates the equivariant-radial-squared-blowup construction
of Section~\ref{sec:radial squared blowup}.

Symplectic geometers find it natural to parametrize
a complex number $z = x+iy = re^{i\theta}$ 
by $s := \frac{1}{2} r^2$ and $\theta$  
rather than the standard polar coordinates $r$ and $\theta$.  
With this parametrization,
the standard symplectic form $dx \wedge dy$ becomes $ds \wedge d\theta$.
That is, $(s,\theta)$ are symplectic coordinates.
This works outside the origin; polar coordinates --- symplectic or not ---
generally fail to be useful at the origin.

In equivariant differential topology,
symplectic polar coordinates turn out to be useful
even at the origin.   We now explain how.
To simplify formulas, 
we now take $s$ to be $r^2$ instead of $\frac{1}{2} r^2$.

Consider the half-cylinder $S^1 \times \R_{\geq 0}$
with the circle acting on the first component.  
The map
$$ E \colon S^1 \times \Rplus \to \C
 \quad , \quad E(u,s) := \sqrt{s} \, u $$
descends to a bijection
$\ol{E} \colon (S^1 \times \R_{\geq 0})/{\sim} \to \C$,
where $\sim$ is the equivalence relation in which 
distinct points $(u_1,s_1)$ and $(u_2,s_2)$ 
are equivalent iff $s_1=s_2=0$.
Because the map $E$ is continuous and proper,
the bijection $\ol{E}$ is a homeomorphism.

The map $E$ is not smooth,
but the relation $E \circ \psi = \varphi \circ E$,
expressed in the commuting diagram below,
defines a bijection between $S^1$-equivariant diffeomorphisms 
of the half-cylinder and $S^1$-equivariant diffeomorphisms of $\C$.
$$ \xymatrix{
 S^1 \times \R_{\geq 0} \ar[r]^{\psi} \ar[d]_{ E }
 & S^1 \times \R_{\geq 0} \ar[d]^{ E } \\
 \C \ar[r]^{\varphi} & \C \, . 
} $$

Indeed, 
every $S^1$-equivariant diffeomorphism $\psi$ of the half-cylinder
has the form $\psi(u,s) = (\, a(s)u, \, g(s) s \,)$
for some smooth maps $a \colon \R_{\geq 0} \to S^1$
and $g \colon \R_{\geq 0} \to \R_{>0}$.
(This is a consequence of Hadamard's lemma.)
The corresponding map of $\C$ is 
$\varphi(z) \, = \, \sqrt{g(|z|^2)} \, a(|z|^2) \, z$, which is smooth.
Because we can obtain a smooth inverse for $\varphi$ 
by applying the same argument to the inverse of $\psi$,
we conclude that $\varphi$ is a diffeomorphism.

Conversely, 
every $S^1$-equivariant diffeomorphism $\varphi$ of $\C$
has the form $\varphi(z) =  r(|z|^2) \, a(|z|^2) \, z$ 
where $r \colon \Rplus \to \R_{>0}$
and $a \colon \Rplus \to S^1$ are smooth.
(This is a consequence of Hadamard's lemma
and of the fact that any $S^1$-invariant smooth function 
of $z \in \C$ is smooth as a function of $|z|^2$.
This fact, in turn, is a consequence of Whitney's theorem 
about smooth even functions~\cite{whitney}.)
The corresponding map of the half-cylinder is 
$\psi(u,s) = (a(s)u,r(s)^2s)$, which is smooth.
Because we can obtain a smooth inverse for $\psi$
by applying the same argument to the inverse of~$\varphi$, 
we conclude that $\psi$ is a diffeomorphism.

\begin{center}
\begin{figure}[h]
\includegraphics[scale=.75]{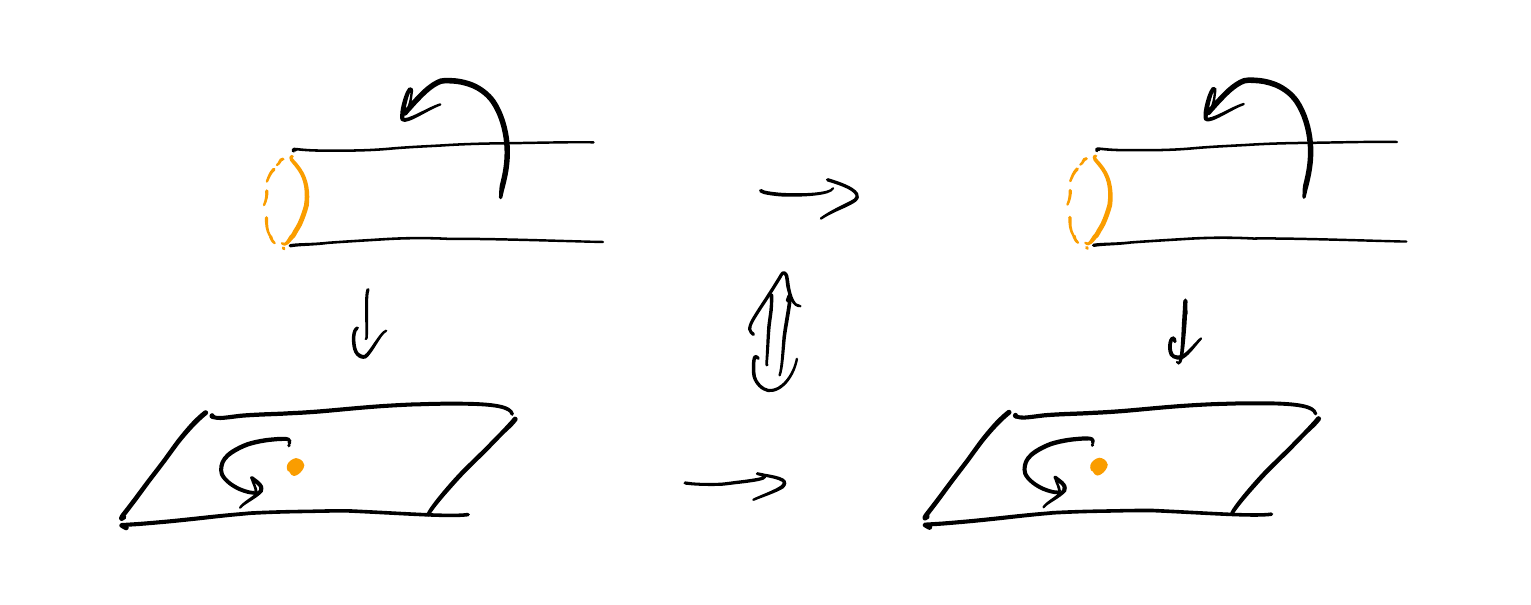}
\caption{Bijection between equivariant diffeomorphisms}
\end{figure}
\end{center}

Ordinary polar coordinates do not have this property:
with the corresponding map
$$ p \colon S^1 \times \Rplus \to \C \quad , \quad p(u,r) = ru ,$$
the diagram 
$$ \xymatrix{
 S^1 \times \R_{\geq 0} \ar[r]^{\psi} \ar[d]_{ p }
 & S^1 \times \R_{\geq 0} \ar[d]^{ p } \\
 \C \ar[r]^{\varphi} & \C \, 
} $$
does not give a bijection between $S^1$-equivariant diffeomorphisms
of the half-cylinder and $S^1$-equivariant diffeomorphisms of $\C$.
We do get a map in one direction:
for each $S^1$-equivariant diffeomorphism $\varphi \colon \C \to \C$,
this diagram determines an equivariant diffeomorphism 
$\psi \colon S^1 \times \Rplus \to S^1 \times \Rplus$; 
we explain this further below.
But the other direction does not work:
for example, the equivariant diffeomorphism
$\psi(u,r) := (ue^{ir}, r)$ of $S^1 \times \Rplus$
descends to the equivariant homeomorphism $\varphi(z) := z e^{i|z|}$ 
of $\C$, which is not smooth at the origin.

Once we pass to homeomorphisms, the distinction
between the radius and radius-squared becomes irrelevant,
and every equivariant homeomorphism of $S^1 \times \Rplus$ 
descends to an equivariant homeomorphism of $\C$.
But now we have a problem in the other direction:
an equivariant homeomorphism of $\C$
generally does not lift 
to an equivariant homeomorphism of $S^1 \times \Rplus$.
For example, the equivariant homeomorphism of $\C$
that is given by $\varphi(z) = ze^{i/|z|}$ when $z \neq 0$
does not lift to an equivariant homeomorphism of $S^1 \times \Rplus$.

We now explain why, under the ordinary polar coordinates map
$p \colon S^1 \times \Rplus \to \C$,
an equivariant diffeomorphism of $\C$ lifts
 to an equivariant diffeomorphism of the half-cylinder.
It is enough to consider neighbourhoods of the origin.
Identifying $\C$ with $\R^2$ and applying Hadamard's lemma, 
we obtain a smooth function 
$A \colon \R^2 \to \R^{2 \times 2}$
to the set of $2 \times 2$ matrices
such that $\varphi(z) = A(z) \cdot z$ for all $z \in \R^2$.
Because the differential of $\varphi$ at the origin is invertible,
the matrix $A(z)$ is invertible for $z$ near the origin.
The function 
$\psi(u,r) 
 := \big( \frac{A(ru) \cdot u}{|A(ru) \cdot u|}, r |A(ru) \cdot u| \big)$
is then a smooth lifting of $\varphi$ near the origin.
Because we obtain a smooth inverse of $\psi$ near the origin
by applying the same argument to the inverse of $\varphi$,
we conclude that $\varphi$ lifts to a diffeomorphism $\psi$
of the half-cylinder.
In fact, this argument 
applies to arbitrary---not necessarily equivariant---diffeomorphisms 
of $\C$ that fix the origin.
This is a special case of the functoriality of the 
radial blowup construction, which we describe in the next section.

\section{Radial blowup}
\labell{sec:radial blowup}

In this section we describe the radial blowup construction,
which was described by Klaus J\"anich in 1968 \cite{janich}.
This construction is a generalization of the passage to polar coordinates.
This section provides preparation for the next section,
where we describe the equivariant-radial-squared blowup construction,
which provides an inverse to the cutting construction.

Below, we use the symbol $0$ to denote the zero section
in any vector bundle.
Whenever we use it, it should be clear from the context
what is the ambient vector bundle.


Whereas any vector bundle can be naturally identified
with the normal bundle of its zero section,
we do not make this identification,
so as to not introduce ambiguity on a set-theoretic level.

\smallskip\noindent\textbf{Radial blowup 
of a vector bundle along its zero section}

Let $E \to F$ be a vector bundle.  Consider the sphere bundle 
$$ S_E(0) := ( \nu_E(0) \ssminus 0) / \Rpos \, ,$$
where $\nu_E(0)$ is the normal bundle of the zero section in $E$.
As a set, the radial blowup of $E$ along~$0$ is the disjoint union
$$ E \odot 0 := (E \ssminus 0) \sqcup S_E(0) \, ,$$
equipped with the blow-down map
$$ p \colon E \odot 0 \to E $$
whose restriction to $E \ssminus 0$ is the inclusion map into $E$
and whose restriction to $S_E(0)$
is induced from the bundle map $\nu_E(0) \to 0$.

Let
$$ q \colon E \ssminus 0 \to S_E(0) $$
be the quotient map to the sphere bundle
that is obtained from the natural identification $E \cong \nu_E(0)$
by restricting to the complement of the zero section
and composing with the quotient map to~$S_E(0)$.
Fix a fibrewise inner product on $E$ with fibrewise norm $|\cdot|$.
There exists a unique manifold-with-boundary structure on $E \odot 0$
such that the bijection
$$ E \odot 0 \to S_E(0) \times \Rplus $$
that is given by 
$$
 x \mapsto \begin{dcases}
  \ (q(x),|x|)  & \text{ if } x \in E \ssminus 0
\\
  \ \phantom{q(} (x \phantom{)} , \phantom{|} 0) & \text{ if } x \in S_E(0) 
\end{dcases} 
$$
is a diffeomorphism. 

For any two fibrewise inner products on $E$,
the ratio of their norms is a smooth function on $E \ssminus 0$
that is constant on $\Rpos$-orbits,
so it has the form $x \mapsto \rho(q(x))$, where
$$ \rho\colon S_E(0) \to \Rpos $$ 
is a smooth function with positive values.
The corresponding bijections $E \odot 0 \to S_E(0) \times \Rplus$
are then related by the diffeomorphism $(s,r) \mapsto (s,\rho(s)r)$
of $S_E(0) \times \Rplus$.
It follows that the manifold-with-boundary structure of $E \odot 0$
is independent of the choice of fibrewise inner product.

For any open neighbourhood $\calO$ of the zero section $0$ in $E$,
$$ \calO \odot 0 := (\calO \ssminus 0) \sqcup S_E(0) $$
is an open subset of the manifold-with-boundary $E \odot 0$,
so it too becomes a manifold-with-boundary.
Its boundary is
$$ \del (\calO \odot 0) = S_E(0) \, ,$$
and its interior is $\calO \ssminus 0$.
The manifold structures on $S_E(0)$ and on $\calO \ssminus 0$
that are induced from $\calO \odot 0$
coincide with their manifold structures that are induced from $E$.

\begin{center}
\begin{figure}[h]
\includegraphics[scale=.75]{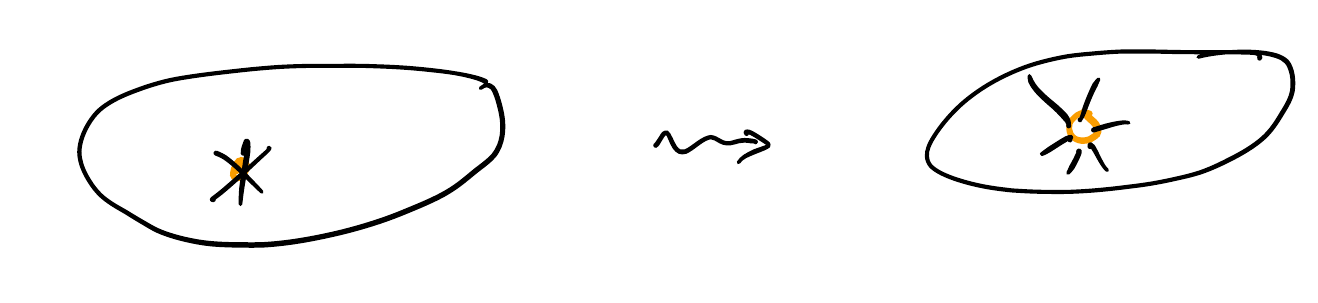}
\caption{Radial blowup}
\end{figure}
\end{center}

{

\smallskip\noindent\textbf{Lifting diffeomorphisms}

The following lemma is a crucial step 
in extending the radial blow-up construction
from vector bundles to manifolds.
%
 
\begin{Lemma}
\labell{diffeo lift to radial}
Let $E \to F$ be a vector bundle and $0$ its zero section.
Let $\calO_1$ and $\calO_2$ be open neighbourhoods of the zero section,
and let 
$$ \varphi \colon \calO_1 \to \calO_2 $$
be a diffeomorphism whose differential along the zero section 
is the identity map.
Then $\varphi$ lifts to a diffeomorphism 
$\psi \colon \calO_1 \odot 0 \to \calO_2 \odot 0$.
\end{Lemma}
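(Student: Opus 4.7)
The plan is to reduce to a local computation in a bundle chart and apply Hadamard's lemma. The condition that $d\varphi$ along the zero section equals the identity forces $\varphi$ to fix the zero section pointwise, so $\varphi$ restricts to a diffeomorphism $\calO_1 \ssminus 0 \to \calO_2 \ssminus 0$. Under the inclusions $\calO_i \ssminus 0 \hookrightarrow \calO_i \odot 0$ as the interior, this already gives a smooth map on the interior of $\calO_1 \odot 0$. The whole task is to show that it extends smoothly across the boundary sphere bundle $S_E(0)$.

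First I would cover the zero section by trivializing open sets $U_\alpha \subset F$ with $E|_{U_\alpha} \cong U_\alpha \times \R^k$, and equip each chart with the standard fiberwise inner product. Writing $\varphi(u,v) = (\varphi_1(u,v),\varphi_2(u,v))$ in such a chart, the identity-differential hypothesis translates into $\varphi_1(u,0) = u$, $\varphi_2(u,0) = 0$, $\partial_u\varphi_1|_{v=0} = \Id$, $\partial_v\varphi_1|_{v=0} = 0$, $\partial_u\varphi_2|_{v=0} = 0$, and $\partial_v\varphi_2|_{v=0} = \Id$. Hadamard's lemma in the fiber variable---after possibly shrinking $\calO_1$ so that every fiberwise segment from $(u,0)$ lies in it---produces smooth matrix-valued maps $A$ and $B$ such that
$$ \varphi_1(u,v) \;=\; u + B(u,v)\,v, \qquad \varphi_2(u,v) \;=\; A(u,v)\,v, $$
with $A(u,0) = \Id$ and $B(u,0) = 0$.

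Next I would pass to the blowup coordinates $U_\alpha \times S^{k-1} \times \Rplus$ on $(U_\alpha \times \R^k)\odot 0$ via $(u,w,r) \mapsto (u,rw)$, with boundary at $r=0$. In these coordinates the candidate lift reads
$$ \psi(u,w,r) \;=\; \Big(\, u + r\,B(u,rw)\,w, \;\; \tfrac{A(u,rw)\,w}{|A(u,rw)\,w|}, \;\; r\,|A(u,rw)\,w| \,\Big) $$
for $r > 0$. Because $A(u,0) = \Id$ and $|w|=1$, the quantity $|A(u,rw)\,w|$ is bounded away from zero for small $r$, so every component is jointly smooth in $(u,w,r)$ up to and including $r = 0$; at $r=0$ the formula specializes to $(u,w,0) \mapsto (u,w,0)$, the identity on the boundary chart. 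Combined with $\varphi$ on the interior, this defines a smooth map locally on $\calO_1 \odot 0$.

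Finally I would patch: on overlaps the interior pieces agree (they all equal $\varphi$), and since the smooth structure on $E \odot 0$ is independent of the choice of fiberwise inner product, the locally-defined smooth extensions to the boundary also agree, giving a globally smooth $\psi \colon \calO_1 \odot 0 \to \calO_2 \odot 0$. Applying the same argument to $\varphi^{-1} \colon \calO_2 \to \calO_1$, whose differential along the zero section is also the identity, produces a smooth map that is the two-sided inverse of $\psi$ (the equality can be checked on the dense open set $\calO_2 \ssminus 0$ where both reduce to $\varphi \circ \varphi^{-1} = \Id$, and then propagated by continuity). The main technical obstacle is verifying smoothness of $\psi$ near $r=0$; the real content is the fiberwise Hadamard expansion, and the identity-differential hypothesis is exactly what makes the $r=0$ limit both well-defined and non-degenerate.
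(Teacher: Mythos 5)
Your proof is correct and is essentially the same argument as the paper's: expand $\varphi$ fibrewise by Hadamard's lemma in a bundle chart, pass to blowup coordinates $(u,w,r)\mapsto(u,rw)$, write down the explicit lift $\psi$, and observe smoothness up to $r=0$ since $A(u,0)$ is non-degenerate. The only organizational difference is that the paper routes through the more general Lemma~\ref{smooth lift to radial}, which handles arbitrary smooth maps whose radial-squared pullback has non-degenerate Hessian (not just diffeomorphisms with identity differential), and derives this statement as a corollary; your direct version exploits the stronger hypothesis to get $A(u,0)=\Id$ rather than merely injective, but the computation is the same. One small slip: your patching step attributes the agreement of local extensions to the inner-product-independence of the smooth structure, but what actually makes them agree is that the lift is uniquely determined on the dense interior $\calO_1\ssminus 0$ and extends by continuity, which is how the paper phrases it.
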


Lemma~\ref{diffeo lift to radial} 
follows from the following more general result. 

\begin{Lemma}
\labell{smooth lift to radial}
Let $E_1 \to F_1$ and $E_2 \to F_2$ be vector bundles.
Let $\calO_1$ be an open neighbourhood 
of the zero section $0_1$ in $E_1$,
let $\calO_2$ be an open neighbourhood 
of the zero section $0_2$ in $E_2$,
and let
$$ \varphi \colon \calO_1 \to \calO_2 $$
be a smooth map.
Assume that for some---hence every---fibrewise inner product on $E_2$,
the function on $\calO_1$ that is given by 
$e \mapsto |\varphi(e)|^2$ vanishes exactly on the zero section,
and its Hessian is non-degenerate on the normal bundle 
to the zero section.
Then there exists a unique smooth map $\psi \colon \calO_1 \odot 0_1
 \to \calO_2 \odot 0_2$ such that the diagram
$$ \xymatrix{
   \calO_1 \odot 0_1 \ar[r]^{\psi}\ar[d]^{p_1} 
 & \calO_2 \odot 0_2 \ar[d]^{p_2} \\
 \calO_1 \ar[r]^{\varphi} & \calO_2
} $$
commutes.
Moreover, the composition of $\psi$ with any boundary-defining function
on $\calO_2 \odot 0_2$
is a boundary-defining function on $\calO_1 \odot 0_1$.
\end{Lemma}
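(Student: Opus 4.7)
The plan is to establish uniqueness essentially for free, then construct $\psi$ locally in coordinates using Hadamard's lemma. For uniqueness, note that $p_1$ restricts to a diffeomorphism of $\calO_1 \ssminus 0_1$ with an open dense subset of $\calO_1 \odot 0_1$, and similarly for $p_2$; since $|\varphi(e)|^2 > 0$ off the zero section, $\varphi$ maps $\calO_1 \ssminus 0_1$ into $\calO_2 \ssminus 0_2$, so the commuting-diagram condition forces $\psi = p_2^{-1} \circ \varphi \circ p_1$ on the interior. Any smooth extension to the boundary sphere bundle $S_{E_1}(0_1)$ is then unique by continuity, so it suffices to construct a smooth local extension near each point of $S_{E_1}(0_1)$.

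To do so, fix $f \in F_1$, and choose local trivializations $E_1 \cong U_1 \times \R^{k_1}$ near $f$ and $E_2 \cong U_2 \times \R^{k_2}$ near $\varphi(f)$, arranging (by Gram--Schmidt on a local frame) that the fibrewise inner product on $E_2$ becomes the standard one. Writing $\varphi(u,x) = (\ol\varphi(u,x), \wt\varphi(u,x))$ with $\wt\varphi(u,0) = 0$, Hadamard's lemma gives $\wt\varphi(u,x) = A(u,x) \cdot x$ for a smooth matrix-valued function $A$. A direct computation of the Hessian of $|\wt\varphi|^2$ at $x=0$ yields the quadratic form $x \mapsto |A(u,0) \cdot x|^2$ on the fibre, so the non-degeneracy hypothesis translates to injectivity of $A(u,0)$ for each $u$. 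Under the local diffeomorphisms $\calO_i \odot 0_i \cong S_{E_i}(0_i) \times \Rplus$ from the excerpt, I then define
$$ \psi(u,v,r) \, := \, \Big( \, \ol\varphi(u,rv), \ \tfrac{A(u,rv) \cdot v}{|A(u,rv) \cdot v|}, \ r\, |A(u,rv) \cdot v| \, \Big). $$
Injectivity of $A(u,0)$ together with $|v|=1$ gives $|A(u,0) \cdot v| > 0$, so $|A(u,rv) \cdot v|$ is smooth and positive in a neighbourhood of $r=0$, and hence $\psi$ is smooth. A direct computation gives $p_2 \circ \psi = \varphi \circ p_1$, and by the uniqueness above these local constructions patch into a global smooth~$\psi$.

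For the final claim, in these coordinates a boundary-defining function on $\calO_2 \odot 0_2$ is, up to a smooth positive multiple, the projection $r'$ to the $\Rplus$-factor; its pullback under $\psi$ is $r\,|A(u,rv) \cdot v|$, which vanishes exactly on $\{r=0\}$ and whose $r$-derivative there equals the positive quantity $|A(u,0) \cdot v|$, so it is a boundary-defining function on $\calO_1 \odot 0_1$. The step I expect to demand the most care is verifying that the non-degenerate-Hessian hypothesis is equivalent to injectivity of $A(u,0)$ independently of the choice of fibrewise inner product and local trivialization (and hence that the formula transforms consistently under changes of chart); once that is pinned down, the explicit formula above and the smoothness and commuting-diagram checks are routine.
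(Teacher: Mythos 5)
Your proposal is correct and follows essentially the same route as the paper: uniqueness from density of the complement of the zero section, then a local construction via Hadamard's lemma that yields precisely the same explicit formula $\psi(t,u,r) = \bigl(\varphi_1(t,ru), \ \frac{A(t,ru)\cdot u}{|A(t,ru)\cdot u|}, \ r\,|A(t,ru)\cdot u|\bigr)$. The only presentational difference is that the paper avoids your concern about chart-independence of the Hessian computation by first translating the hypothesis into invariant terms (namely, that $\varphi$ preserves the zero section and its complement, and that the induced fibrewise-linear map $\nu_{E_1}(0_1)\to\nu_{E_2}(0_2)$ on normal bundles is injective); the paper also notes explicitly that $A(t,ru)\cdot u$ is nonvanishing for all $r>0$, not just near $r=0$, because $\varphi$ carries the complement of the zero section into the complement, whereas you obtain this only implicitly via the patching with $p_2^{-1}\circ\varphi\circ p_1$ on the interior. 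Both handle the same point; neither constitutes a gap.
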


\begin{proof}
The assumption on $\varphi$ is equivalent to the following property:
$\varphi$ takes the zero section $0_1$
to the zero section $0_2$
and the complement $\calO_1 \ssminus 0_1$
to the complement $\calO_2 \ssminus 0_2$,
and the fibrewise linear map $\nu_{E_1} (0_1) \to \nu_{E_2} (0_2)$
between the normal bundles to the zero sections 
that is induced from the differential of $\varphi$
is injective at each point of~$0_1$.

Because $\varphi$ takes $\calO_1 \ssminus 0_1$ to $\calO_2 \ssminus 0_2$,
and because
the blow-down maps $p_1$ and $p_2$ restrict to the identity maps
over the complements of the zero sections,
the commuting of the diagram
determines the map $\psi$ on the open dense subset $\calO_1 \ssminus 0_1$.
By continuity, if such a lift $\psi$ exists then it is unique.
So it is enough to show that a smooth lift $\psi$ of $\varphi$ 
exists locally near each point of the zero section $0_1$.

For each $i$,
choose a fibrewise inner product on $E_i$,
and let $S_i \subset E_i$ be the corresponding unit sphere bundle.
The inclusion map of $S_i$ into $E_i$
induces a diffeomorphism $S_i \cong S_{E_i}(0)$,
and, furthermore, a diffeomorphism of manifolds-with-boundary
$$ S_i \times [0,\infty) \xrightarrow{\cong}  E_i \odot 0_i  ,$$
whose composition with the blow-down maps is given by $(u,r) \mapsto ru$.

For each $i$, let $d_i$ denote the dimension 
of the base manifold $F_i$,
and let $k_i$ denote the rank of the vector bundle $E_i \to F_i$.
Because we may work locally, we may assume that
each $F_i$ is an open subset of $\R^{d_i}$
and each $E_i$ is the trivial bundle $F_i \times \R^{k_i}$,
and we may assume that $\calO_1 = D \times B$ 
where $D$ is an open subset of $\R^{d_1}$
and $B$ is a ball, say, of radius $\eps$, about the origin in $\R^{k_1}$.
So $\varphi$ becomes a smooth map 
from $D \times B$ to $\R^{d_2} \times \R^{k_2}$,
and we seek a smooth map $\psi$ such that the diagram
$$ \xymatrix{
 D \times S^{k_1-1} \times [0,\eps) \ar[r]^{\psi} 
 \ar[d]^{(t,u,r) \mapsto (t,ru)} 
    & \R^{d_2} \times S^{k_2-1} \times [0,\infty)\ar[d] 
 \ar[d]^{(t,u,r) \mapsto (t,ru)} 
 \\
 D \times B \ar[r]^{\varphi} & \R^{d_2} \times \R^{k_2}
} $$
commutes.

Because $\varphi$ maps the zero section to the zero section,
Hadamard's lemma with parameters gives a smooth map
$$ A \colon D \times B \to \R^{k_2 \times k_1} ,$$
where $\R^{k_2 \times k_1}$ denotes the set of $k_2 \times k_1$ matrices,
such that 
$$ \varphi(t,x) \, = \, ( \, \varphi_1(t,x) \, , \, A(t,x) \cdot x \, ) $$
for all $t \in D$ and $x \in B$,
where $\cdot$ denotes the product of a matrix with a column vector
and where $\varphi_1$ is the first component of $\varphi$.

Writing $u=x/|x|$ and $r=|x|$, we have
$$ \frac{ A(t,x) \cdot x } { | A(t,x) \cdot x | }
   = \frac{ A(t,ru) \cdot u } { | A(t,ru) \cdot u | } 
\quad \text{ and } \quad
|A(t,x) \cdot x| = r |A(t,ru) \cdot u| $$
whenever the denominator does not vanish.
But the matrix $A(t,0)$ represents
the linear map between the normal bundles to the zero sections
at the point $(t,0)$,
and the kernel of this linear map is trivial by our assumption on $\varphi$,
so $A(t,ru) \cdot u$ is non-vanishing when $r=0$.
Because $\varphi$ takes the complement to the zero section
to the complement to the zero section, 
$A(t,ru) \cdot u$ is non-vanishing also when $r > 0$.  So
$$ (t,u,r) \mapsto | A(t,ru) \cdot u | $$
defines a smooth map with positive values,
and 
we obtain a smooth lift $\psi$ of $\varphi$ with the required properties
by setting 
$$ \psi(t,u,r) \, := \, 
\Big( \varphi_1(t,ru), \  
      \frac{ A(t,ru) \cdot u } { |A(t,ru) \cdot u| } , \  
      r |A(t,ru) \cdot u | \Big) \, .$$
\end{proof}

}

\smallskip\noindent\textbf{Radial blowup of a manifold along a submanifold}

Let $X$ be a manifold, and let $F$ be a closed submanifold.
Consider the sphere bundle
$$ S_X(F) \, := \, (\nu_X(F) \ssminus 0)/\Rpos \, ,$$
where $\nu_X(F)$ is the normal bundle of $F$ in~$X$.
As a set, the radial blowup of $X$ along $F$ is the disjoint union
$$ X \odot F \, := \, (X \ssminus F) \, \sqcup \, S_X(F) \, ,$$
equipped with the blow-down map
$$ p \colon X \odot F \to X $$
whose restriction to $X \ssminus F$ is the inclusion map into $X$
and whose restriction to $S_X(F)$ is induced from the bundle map
$\nu_X(F) \to F$.
This definition is consistent with the earlier definition 
for the radial blowup of a vector bundle along its zero section.

Let $\calO$ be a starshaped open neighbourhood of the zero section $0$
in the normal bundle $\nu_X(F)$, let 
$$ \varphi \colon \calO \to X $$
be a tubular neighbourhood embedding,
and let
$$ \psi \colon \calO \odot 0 \to X \odot F $$
be the injection that is induced from $\varphi$.

(We have $\calO \odot 0 = (\calO \ssminus 0) \sqcup S_X(F)$.
On the subset $\calO \ssminus 0$,
the injection $\psi$ coincides with $\varphi$.
On the subset~$S_X(F)$, the injection $\psi$ 
is induced from the identification 
of $\nu_{\nu_X(F)}(0)$ with~$\nu_X(F)$.)

Then we have
$$ X \odot F \, = \, \psi(\calO \odot 0) \, \cup \, (X \ssminus F) \, .$$
There exists a unique manifold-with-boundary structure
on $X \odot F$ such that the maps
$$ \psi \colon \calO \odot 0 \, \to \, X \odot F 
\quad \text{ and } \quad
X \ssminus F \, \xrightarrow{\text{inclusion}} \, X \odot F $$
are diffeomorphisms with open subsets of $X \odot F$,
where, in the domains of these maps,
$\calO \odot 0$ is a manifold-with-boundary as an open subset 
of $E \odot 0$ for the vector bundle $E := \nu_X(F)$,
and $X \ssminus F$ is a manifold as an open subset of $X$.
This is because the preimage in $\calO \odot 0$ 
of the intersection $\psi (\calO \odot 0) \cap (X \ssminus F) $
is the open subset $\calO \ssminus 0$,
the preimage in $X \ssminus F$ of this intersection 
is the open subset $\varphi (\calO \ssminus 0)$,
and $\psi$ restricts to the diffeomorphism $\varphi$
between these preimages.
Because each of $\calO \odot 0$ and $X \ssminus F$ is second countable,
so is $X \odot F$.
To see that $X \odot F$ is Hausdorff, 
note that any point of $X \ssminus F$
has a neighbourhood $W$ whose closure $\ol{W}$ in $X$ is disjoint from $F$;
let $\calO_1$ be the preimage in $\calO$ 
of the complement $X \ssminus \ol{W}$;
then $\psi(\calO_1 \odot 0)$
is a neighbourhood of $S_X(F)$ in $X \odot F$
that is disjoint from the neighbourhood $W$ of the given point
of $X \ssminus F$.

The boundary of the manifold-with-boundary $X \odot F$ is 
$$ \del (X \odot F) = S_X(F) , $$
and the interior of $X \odot F$ is $X \ssminus F$.
The manifold structures on $S_X(F)$ and on $X \ssminus F$
that are induced from $X \odot F$
coincide with their manifold structures that are induced from $X$.

The manifold-with-boundary structure on $X \odot F$
is independent of the choice of tubular neighbourhood embedding.
J\"anich \cite{janich} states this fact without proof, and he writes this:
\emph{The proof has nothing to do with ``uniqueness of tubular maps'',
it is just an exercise in calculus}.
He probably had in mind Lemma~\ref{diffeo lift to radial}.
Indeed, for any two tubular neighbourhood embeddings 
$\varphi_1 \colon \calO_1 \to X$ and $\varphi_2 \colon \calO_2 \to X$,
their germs along the zero section of $\nu_X(F)$
are related by a diffeomorphism between neighbourhoods of the zero section
whose differential along the zero section is the identity map.
Lemma~\ref{diffeo lift to radial}
implies that---after possibly shrinking $\calO_1$ and $\calO_2$
so as to have the same image in $X$---the corresponding maps
$\psi_1 \colon \calO_1 \odot 0 \to X \odot F$
and $\psi_2 \colon \calO_2 \odot 0 \to X \odot F$
are related by a diffeomorphism 
from $\calO_1 \odot 0$ to $\calO_2 \odot 0$.
This implies that the identity map on $X \odot F$
is a diffeomorphism 
from the manifold-with-boundary structure 
that is induced from the tubular neighbourhood embedding $\varphi_1$
to the manifold-with-boundary structure 
that is induced from the tubular neighbourhood embedding $\varphi_2$.

\section{Equivariant radial-squared blowup}
\labell{sec:radial squared blowup}

In this section we describe the radial-squared blowup,
which---for circle actions---provides an inverse
to the cutting construction.

\smallskip\noindent\textbf{Radial-squared blowup of a vector bundle
along its zero section}

Let $E \to F$ be a vector bundle.
As a set, the radial-squared blowup of $E$ along $0$
is the same as the radial blowup:
$$ E \oodot 0 := (E \ssminus 0) \sqcup S_E(0) ,$$
where $0$ is the zero section 
and $S_E(0) = (\nu_E(0) \ssminus 0) / \Rpos$ is the sphere bundle
of its normal bundle. We take the same blow-down map
$$ p \colon E \oodot 0 \to E $$
and quotient map
$$ q \colon \, E \ssminus 0 \, \to S_E(0)$$ 
as before,
and fix a fibrewise inner product on $E$ with fibrewise norm $| \cdot |$.
There exists a unique manifold-with-boundary structure
on $E \oodot 0$ such that the bijection
$$ E \oodot 0 \to S_E(0) \times \Rplus $$
that is given by
$$
 x \mapsto \begin{dcases}
  \ (q(x),|x|^2)  & \text{ if } x \in E \ssminus 0
\\
  \ \phantom{q(} (x \phantom{)} , \phantom{|} 0) & \text{ if } x \in S_E(0) 
\end{dcases} 
$$
(this time with $|x|^2$, not $|x|$)
is a diffeomorphism. 
As before, this manifold-with-boundary structure on $E \oodot 0$
is independent of the choice of fibrewise inner product,
its boundary is $S_E(0)$, and its interior is $E \ssminus 0$.

The identity map 
$$ E \odot 0 \xrightarrow{\text{identity}} E \oodot 0 $$
is a homeomorphism and is smooth,
but it is not a diffeomorphism.
Thus, the radial blowup $E \odot 0$ 
and the radial-squared blowup $E \oodot 0$
yield the same topological manifold,
with the same smooth structures on its boundary and on its interior,
but with different manifold-with-boundary structures.
For any neighbourhood $U$ of the zero section, 
there exists a diffeomorphism between these manifolds-with-boundary
that is supported in $U$ 
and that restricts to the identity map on the zero section, 
but we cannot take this diffeomorphism to be the identity map.

\smallskip\noindent\textbf{Attempt at radial-squared blowup 
of a manifold along a submanifold}

We now discuss the possibility of defining
the radial-squared blowup of a manifold $X$ along a closed submanifold $F$.
As a set, this will be the same as the radial blowup:
$$ X \oodot F = (X \ssminus F) \sqcup S_X(F) \,,$$
with the same blow-down map $p \colon X \oodot F \to X$.
Let $\calO$ be a starshaped open neighbourhood
of the zero section $0$ in the normal bundle $\nu_X(F)$, 
let
$ \calO \oodot 0 := ( \calO \ssminus 0 ) \sqcup S_E(0) $
be the corresponding open subset of the manifold-with-boundary 
$E \oodot 0$,
fix a tubular neighbourhood embedding
$$ \calO \to X \, , $$
and let
$$ \psi \colon \calO \oodot 0 \to X \oodot F $$
be the injection that it induced.
As before,
there exists a unique manifold-with-boundary structure
on $X \oodot F$ such that the maps 
$$ \psi \colon \calO \oodot 0 \to X \oodot F \quad \text{ and } \quad
X \ssminus F \xrightarrow{\text{inclusion}} X \oodot F $$
are diffeomorphisms with open subsets of $X \oodot F$.
For this manifold-with-boundary structure
to be independent on the choice of the tubular neighbourhood embedding,
we need an analogue of Lemma~\ref{diffeo lift to radial}:
for every diffeomorphism $\varphi \colon \calO \to \calO'$ 
between neighbourhoods of the zero section
whose differential along the zero section is the identity map,
we would like to have a diffeomorphism $\hat{\psi}$ 
such that the square
$$ \xymatrix{
    \calO \oodot 0 \ar[r]^{\hat{\psi}} \ar[d] 
  & \calO' \oodot 0 \ar[d] \\
\calO \ar[r]^{\varphi} & \calO'
} $$
commutes.

Unfortunately, such a $\hat{\psi}$ might not exist.
To see this, we locally identify $\calO$ with $D \times B$
where $D$ is an open subset of $\R^d$ and $B$ is a ball,
say, of radius-squared $\eps$, about the origin in $\R^k$,
and we write the diffeomorphism $\varphi$ as
$$ \varphi(t,x) = (\varphi_1(t,x),A(t,x) \cdot x ) ,$$
as in the proof of Lemma~\ref{smooth lift to radial},
where $\varphi(t,0)=t$ and $A(t,0)$ is the identity matrix
for all $t$.
We then seek a smooth map $\hat{\psi}$ such that the diagram
$$
 \xymatrix{
 D \times S^{k-1} \times [0,\eps) \ar[r]^{\hat{\psi}}
 \ar[d]^{(t,u,s) \mapsto (t, \sqrt{s} u)} 
    & \R^{d} \times S^{k-1} \times [0,\infty)\ar[d] 
 \ar[d]^{(t,u,s) \mapsto (t, \sqrt{s} u)} 
 \\
 D \times B \ar[r]^{\varphi} & \R^{d} \times \R^{k}
} 
$$
commutes. Necessarily,
$$ \hat{\psi}(t,u,s) \, = \, 
\Big( \varphi_1(t, \sqrt{s} u), \  
      \frac{ A(t, \sqrt{s} u) \cdot u } { |A(t, \sqrt{s} u) \cdot u| } , \  
      s |A(t, \sqrt{s} u) \cdot u |^2 \Big) \, ;$$
this map is continuous, but it might not be smooth,
Take, for example, $\varphi(t,x) = (t + L(x), x)$
for some non-trivial linear map $L \colon \R^k \to \R^d$.
Then $\hat{\psi}(t,u,s) = (t+\sqrt{s}L(u), u, s)$, which is not smooth
along $\{s=0\}$.

Fortunately, in the special case that relates to cutting,
we can obtain a manifold structure on $X \oodot F$
through tubular neighbourhood embeddings that are equivariant near $F$.
We will do this in a moment.

In this special case that we need, $F$ has codimension $2$,
and the manifold $X$ is equipped with a circle action
near $F$ that fixes $F$ and is free outside $F$.
But keeping in mind future applications,
we will allow more than circle groups.
This is inspired by J\"anich's and Bredon's work 
(\cite{janich:earlier,janich}, \cite[Chapter VI, Section 6]{bredon}).
Future applications may include simultaneous cutting
of toric Lagrangians \cite{acannas}
that are invariant under a product of several copies of $S^1$ and $\Z_2$.

\smallskip\noindent\textbf{Equivariant radial-squared blowup 
of a manifold along a sub-manifold}

Let $G$ be a compact Lie group, and let $E \to F$ be a vector bundle,
equipped with a fibrewise linear $G$ action
that is transitive on the fibres of the sphere bundle 
$(E \ssminus 0) / \Rpos$.
Fix a $G$-invariant fibrewise inner product on $E$,
and let $S$ be the unit sphere bundle in $E$ with respect to 
the corresponding fibrewise norm $| \cdot |$.
The natural diffeomorphism $S \cong (E \ssminus 0)/\Rpos$
gives a diffeomorphism of manifolds-with-boundary
$$ S \times [0,\infty) \xrightarrow{\cong} E \oodot 0 $$
whose composition with the radial-squared-blowdown map is 
$$ (u,s) \mapsto \sqrt{s} u \, .$$

Let $E' \to F'$ be another vector bundle with a $G$ action
with these properties, with zero section $0'$ 
and unit sphere bundle $S'$.

We have the following analogues
of Lemmas~\ref{diffeo lift to radial} and~\ref{smooth lift to radial}.

\begin{Lemma}\labell{diffeo lift to radial squared}
Let $\calO$ and $\calO'$
be $G$-invariant open neighbourhoods of the zero section of $E$, and let
$$ \varphi \colon \calO \to \calO' $$
be a $G$-equivariant diffeomorphism 
whose differential along the zero section is the identity map.  
Then $\varphi$ lifts to a diffeomorphism
$ \psi \colon \calO \oodot 0 \to \calO' \oodot 0$.
\end{Lemma}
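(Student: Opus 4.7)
The plan is to follow the strategy of Lemma~\ref{diffeo lift to radial} (via Lemma~\ref{smooth lift to radial}), while exploiting $G$-equivariance and the transitivity of $G$ on sphere fibres to eliminate the non-smooth $\sqrt{s}$ dependence that was identified just before the lemma as the obstruction to a naive radial-squared lift.

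Uniqueness and localization go through exactly as before: the blow-down map is an identification on the open dense subset $\calO \ssminus 0$, so any lift is prescribed there and unique by continuity, and it is enough to produce a smooth lift locally near each point of the zero section and patch. Near a chosen point $f_0 \in F$, I would use the slice theorem for the induced $G$-action on $F$ to obtain a $G$-equivariant local trivialization identifying an open neighbourhood of $(f_0, 0)$ in $E$ with $D \times V$, where $D$ is a disc in a slice to the $G$-orbit of $f_0$ and $V = E_{f_0}$ is a $G_{f_0}$-representation on which $G_{f_0}$ acts transitively on the unit sphere $S(V)$. Because $\varphi$ is $G$-equivariant, its restriction to this slice is $G_{f_0}$-equivariant, and a $G_{f_0}$-equivariant smooth lift on $D \times V$ will extend uniquely to a $G$-equivariant one on a full $G$-invariant neighbourhood by propagation along $G$-orbits.

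In these coordinates the Hadamard-with-parameters argument of Lemma~\ref{smooth lift to radial} gives $\varphi(t, x) = (\varphi_1(t, x), A(t, x) \cdot x)$ with $A$ a smooth matrix-valued function satisfying $A(t, 0) = I$. The key new ingredient, and the main technical obstacle, is the equivariant refinement: because $G_{f_0}$ fixes $D$ pointwise and acts transitively on the unit sphere in $V$, the $G_{f_0}$-invariant smooth functions on $V$ are smooth functions of $|x|^2$ (reducing along a radial line to Whitney's theorem on smooth even functions), and every $G_{f_0}$-equivariant smooth map $V \to V$ vanishing at the origin has the form $x \mapsto h(|x|^2) \cdot x$ with $h$ smooth with values in the $G_{f_0}$-commuting matrices. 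Applied to $\varphi_1$, which is $G_{f_0}$-invariant in $x$, and to $A(t, x) \cdot x$, which is $G_{f_0}$-equivariant in $x$, this gives smooth functions $\tilde\varphi_1(t, s)$ and $h(t, s)$ with $h(t, 0) = I$ and
\[ \varphi(t, x) \, = \, \bigl( \, \tilde\varphi_1(t, |x|^2) \, , \, h(t, |x|^2) \cdot x \, \bigr). \]

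The lift is then forced: using the identification of $\calO \oodot 0$ with $D \times S(V) \times [0, \infty)$ and substituting $x = \sqrt{s}\, u$, I would set
\[ \psi(t, u, s) \, := \, \Bigl( \, \tilde\varphi_1(t, s) \, , \, \frac{h(t, s) \cdot u}{|h(t, s) \cdot u|} \, , \, s\, |h(t, s) \cdot u|^2 \, \Bigr), \]
which is smooth in $(t, u, s)$ because $\tilde\varphi_1$ and $h$ are smooth in $(t, s)$ and because $h(t, 0) = I$ makes $|h(t, s) \cdot u|$ positive near $s = 0$. Applying the same argument to $\varphi^{-1}$ yields a smooth local inverse, so $\psi$ is a diffeomorphism; the local lifts then patch by uniqueness into the required global $\psi$.
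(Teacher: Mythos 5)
Your proposal is correct and takes essentially the same route as the paper: a Hadamard factorization $\varphi(t,x)=(\varphi_1(t,x),A(t,x)\cdot x)$, then $G$-equivariance together with transitivity on the sphere fibres (via Schwarz/Bredon/Whitney) converts the dependence on $x$ into dependence on $|x|^2$, and the resulting explicit lift formula is visibly smooth with a smooth inverse obtained the same way from $\varphi^{-1}$. The only structural difference is that the paper proves the more general Lemma~\ref{smooth lift to radial squared} for $G$-equivariant smooth maps satisfying a norm-squared-like condition and deduces the diffeomorphism case as a corollary, and it has no need of your slice-theorem reduction since in this setting $G$ fixes $F$ pointwise, so $G_{f_0}=G$.
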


Lemma~\ref{diffeo lift to radial squared}
follows from the following more general result.

\begin{Lemma}\labell{smooth lift to radial squared}
Let $\calO$ and $\calO'$ be $G$-invariant neighbourhoods
of the zero sections in $E$ and $E'$, and let 
$$ \varphi \colon \calO \to \calO' $$
be a $G$-equivariant smooth map.
Assume that the function on $\calO$
that is given by $e \mapsto |\varphi(e)|^2$
vanishes exactly on the zero section
and that its Hessian 
is non-degenerate on the normal bundle to the zero section.
Then there exists a unique smooth map $\psi'$ such that the diagram
$$ \xymatrix{
    \calO \oodot 0 \ar[r]^{\psi'} \ar[d] 
  & \calO' \oodot 0' \ar[d] \\
\calO \ar[r]^{\varphi} & \calO'
} $$
commutes.
Moreover, $\psi'$ is equivariant,
and its composition with any boundary-defining function 
on $\calO' \odot 0'$ is a boundary-defining function on $\calO \odot 0$.
\end{Lemma}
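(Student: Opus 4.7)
The plan is to imitate the proof of Lemma~\ref{smooth lift to radial} but with $|x|^2$ in place of $|x|$ as the radial coordinate, exploiting $G$-equivariance to kill the half-integer powers of $s$ that obstruct smoothness in the non-equivariant squared case (the counter-example $\varphi(t,x)=(t+L(x),x)$ discussed just before the lemma). Uniqueness of $\psi'$ is immediate: the vanishing hypothesis on $|\varphi|^2$ forces $\varphi$ to send $\calO\ssminus 0$ to $\calO'\ssminus 0'$, the blow-down maps are the identity on these interiors, and so $\psi'$ is pinned down on the open dense subset $\calO\ssminus 0\subset\calO\oodot 0$; the rest is a continuity argument, which also propagates equivariance of $\varphi$ on the interior to equivariance of $\psi'$. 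Since existence is local, I work near a single point of the boundary sphere bundle.

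Using an equivariant trivialization, reduce to $\calO\subset D\times V$ and $\calO'\subset D'\times V'$ with $G$ acting trivially on the discs and linearly, sphere-transitively on $V$ and $V'$. Hadamard's lemma with parameters writes $\varphi(t,x)=(\varphi_1(t,x),\varphi_2(t,x))$ with $\varphi_2(t,x)=A(t,x)\cdot x$, and $G$-equivariance (trivial on $D$) makes $\varphi_1(t,\cdot)$ a $G$-invariant function on $V$; Schwarz's theorem in the sphere-transitive setting (where $|x|^2$ is the single generator of invariants) then gives $\varphi_1(t,x)=\Phi_1(t,|x|^2)$ with $\Phi_1$ smooth. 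Similarly $|\varphi_2(t,x)|^2$ is $G$-invariant, so equals $\Psi(t,|x|^2)$ with $\Psi(t,0)=0$; Hadamard factors $\Psi(t,s)=s\,\Psi_1(t,s)$ smoothly, and the non-degenerate Hessian of $|\varphi|^2$ on $\nu_E(0)$ combined with Schur's lemma (which forces $A(t,0)^T A(t,0)$ to be a positive scalar on the irreducible $V$) gives $\Psi_1(t,0)>0$. In the coordinates $(t,u,s)\in D\times S\times[0,\eps)$ on $\calO\oodot 0$ and the analogous ones on $\calO'\oodot 0'$, the forced formula for the lift is
\[
 \psi'(t,u,s)\ =\ \Big(\Phi_1(t,s),\ \tfrac{\varphi_2(t,\sqrt{s}\,u)}{\sqrt{\Psi(t,s)}},\ \Psi(t,s)\Big).
\]
The first and third components are manifestly smooth, and the boundary-defining-function clause follows from $\psi'_3(t,u,s)=s\,\Psi_1(t,s)$ with $\Psi_1(t,0)>0$.

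The main obstacle is smoothness at $s=0$ of the middle, sphere-valued component $\varphi_2(t,\sqrt{s}\,u)/\sqrt{\Psi(t,s)}=\varphi_2(t,\sqrt{s}\,u)/(\sqrt{s}\,\sqrt{\Psi_1(t,s)})$, since the naive Taylor expansion of $\varphi_2$ in $x$ at $x=0$ contributes terms $s^{k/2}$ that leave half-integer powers after dividing by $\sqrt{s}$. To dispose of these I would invoke the equivariant version of Schwarz's theorem: the $C^\infty(V)^G$-module of smooth $G$-equivariant maps $V\to V'$ is finitely generated by homogeneous equivariant polynomial maps, together with the representation-theoretic input---checkable case-by-case from the Montgomery--Samelson classification of sphere-transitive linear actions---that between any two sphere-transitive $G$-representations every homogeneous equivariant polynomial map has odd degree. (For the main application $G=S^1$, $V=V'=\C$, this is the elementary observation that $S^1$-equivariant polynomials $\C\to\C$ are $\R$-linear combinations of monomials $z^a\bar z^b$ with $a-b=1$, all of odd total degree.) Writing $\varphi_2(t,x)=\sum_j a_j(t,|x|^2)\,p_j(x)$ with each $p_j$ homogeneous of odd degree $d_j$, the substitution $x=\sqrt{s}\,u$ gives $\varphi_2(t,\sqrt{s}\,u)/\sqrt{s}=\sum_j a_j(t,s)\,s^{(d_j-1)/2}\,p_j(u)$, smooth in $(t,u,s)$ because $(d_j-1)/2\in\Z_{\geq 0}$; dividing by the positive smooth factor $\sqrt{\Psi_1(t,s)}$ preserves smoothness. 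The local lifts patch by uniqueness to produce the global $\psi'$.
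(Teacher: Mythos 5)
Your proof follows the same overall plan as the paper's---reduce to a local model $D\times B\to\R^{d'}\times\R^{k'}$ with $G$ acting trivially on the base disc and sphere-transitively on the fibre, apply Hadamard's lemma with parameters to write $\varphi_2(t,x)=A(t,x)\cdot x$, invoke a Bredon/Schwarz/Whitney-type theorem to dispose of the $\sqrt{s}$-singularity, and write down the forced formula for the lift. Your uniqueness argument and the boundary-defining-function clause are handled the same way as in the paper.

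Where you genuinely diverge is at the central step: showing the fibre component of the lift is smooth at $s=0$. The paper asserts that the Hadamard matrix $A(t,x)$ is $G$-invariant in $x$, so that by the Schwarz-type theorem $A(t,x)=\tilde A(t,|x|^2)$, and then writes the manifestly smooth
$\hat\psi(t,u,s)=\big(\tilde\varphi_1(t,s),\ \tilde A(t,s)u/|\tilde A(t,s)u|,\ s|\tilde A(t,s)u|^2\big)$.
You instead expand $\varphi_2(t,x)=\sum_j a_j(t,|x|^2)\,p_j(x)$ via the module version of Schwarz's theorem and argue that the equivariant homogeneous generators $p_j$ between sphere-transitive representations all have odd degree, so that the half-integer powers of $s$ cancel. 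The two arguments are close in spirit: the paper's factorization $\varphi_2(t,x)=\tilde A(t,|x|^2)x$ with $\tilde A(t,s)\in\operatorname{Hom}_G(V,V')$ is exactly the assertion that the equivariant module is generated in degree one, which strengthens your odd-degree claim. Both proofs lean on an invariant-theoretic input that is asserted rather than proved: the paper's sentence ``$A(t,x)$ is $G$-invariant'' is not literally true for the raw Hadamard integral (which satisfies only the conjugation identity $A(t,gx)=gA(t,x)g^{-1}$, so a further normalization is needed), while your ``checkable case-by-case from Montgomery--Samelson'' is not actually carried out, and that classification covers only connected groups, whereas the $G$ here need not be connected. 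A cleaner route to either claim, which you might consider spelling out: the Hessian hypothesis makes $A(t,0)$ an injective equivariant linear map, so $V\cong V'$; one then checks---using Schur's lemma and the isotropy representation at a base point $x_0\in S(V)$---that evaluation at $x_0$ identifies $\operatorname{Hom}_G(V,V')$ with $(V')^{G_{x_0}}$, whence every homogeneous equivariant polynomial $p$ of degree $d$ equals $|x|^{d-1}A_0 x$ for the unique $A_0\in\operatorname{Hom}_G(V,V')$ with $A_0x_0=p(x_0)$; this gives the paper's factorization and your odd-degree statement at once. In summary: correct in outline, with a genuinely different but similarly under-justified representation-theoretic lemma at the key step.
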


\begin{proof}
As in the proof of Lemma~\ref{smooth lift to radial}, 
we locally identify $\varphi$ with a map 
$$ \varphi \colon D \times B \to \R^{d'} \times \R^{k'} $$
of the form
$$ \varphi(t,x) = (\varphi_1(t,x),A(t,x) \cdot x) $$
where $A(t,0)$ is invertible for all $t$,
and we seek a smooth map $\hat{\psi}$ such that the diagram
$$
 \xymatrix{
 D \times S^{k-1} \times [0,\eps) \ar[r]^{\hat{\psi}}
 \ar[d]^{(t,u,s) \mapsto (t, \sqrt{s} u)} 
    & \R^{d'} \times S^{k'-1} \times [0,\infty)\ar[d] 
 \ar[d]^{(t,u,s) \mapsto (t, \sqrt{s} u)} 
 \\
 D \times B \ar[r]^{\varphi} & \R^{d'} \times \R^{k'}
} 
$$
commutes.  Because $\varphi$ is $G$ equivariant, 
$\varphi_1$ and $A(t,x)$ are $G$ invariant.   
Because the $G$ action is transitive on the fibres of $S$,
there exist functions $\tilde{\varphi}_1$ and $\tilde{A}$ such that
$$ \varphi_1(t,x) = \tilde{\varphi}_1(t,|x|^2) 
\quad \text{ and } \quad
   A(t,x) = \tilde{A}(t,|x|^2) \, .$$
Because $\varphi_1$ and $A$ are smooth,
the functions $\tilde{\varphi}_1$ and $\tilde{A}$ are also smooth.
(This special case of Schwarz's theorem \cite{schwarz}
appears in Bredon's book \cite[Chapter~VI, Theorem~5.1]{bredon};
it can be deduced from Whitney's theorem about even functions
\cite{whitney}.)
Then 
$$ \hat{\psi}(t,u,s) \, := \, 
\Big( \tilde{\varphi}_1(t, s), \  
      \frac{ \tilde{A}(t, s) \cdot u } { |\tilde{A}(t,s) \cdot u| } , \  
      s |\tilde{A}(t,s) \cdot u |^2 \Big) \,  \\
$$
is a smooth lift of $\varphi$ with the required properties.
\end{proof}

We can now define the equivariant radial-squared blowup 
$X \oodot F$
of a manifold $X$ along a submanifold $F$, 
when a compact Lie group $G$ acts on a neighbourhood of $F$, 
and when this action fixes $F$ 
and is transitive on the fibres of the sphere bundle 
$S_X(F) = (\nu_X(F) \ssminus 0 ) / \Rpos$.
As a set, it is the same as the radial blowup:
$$ X \oodot F := (X \ssminus F) \, \sqcup \, S_X(F) .$$
Fix a $G$-invariant starshaped open neighbourhood 
$\calO$ of the zero section $0$ in the normal bundle $\nu_X(F)$
and a $G$-equivariant tubular neighbourhood embedding
$$ \calO \to X \,,$$
and let
$$ \psi \colon \calO \oodot 0 \to X \oodot F $$
be the injection that is induced from $\varphi$.
There exists a unique manifold-with-boundary structure on $X \oodot F$
such that the maps
$$ \psi \colon \calO \oodot 0 \to X \oodot F \quad \text{ and } \quad
X \ssminus F \xrightarrow{\text{inclusion}} X \oodot F $$
are diffeomorphisms with open subsets of $X \oodot F$.
Lemma~\ref{diffeo lift to radial squared}
implies that this manifold-with-boundary structure
is independent on the choice of the tubular neighbourhood embedding.

As in the case of a vector bundle,
the radial blowup $X \odot F$
and the radial-squared blowup $X \oodot F$
yield the same topological manifold, with the same smooth structures
on its boundary and on its interior, 
but with different manifold-with-boundary structures.
for any neighbourhood of $F$, there exists an equivariant
diffeomorphism between these manifolds-with-boundary
that is supported in this neighbourhood
and that restricts to the identity map on $F$,
but we cannot take this diffeomorphism to be the identity map.

\smallskip\noindent\textbf{Functoriality:}

By Lemma~\ref{smooth lift to radial squared},
the radial-squared blowup procedure defines a functor.
The domain of this functor is the following category.
An object is a pair $(X,F)$, 
where $X$ is a manifold and $F$ is a closed submanifold,
equipped with a $G$-action on a neighbourhood of $F$
that fixes $F$ and acts transitively on the fibres
of the sphere bundle of $\nu_X(F)$.
For such a pair $(X,F)$, 
an \textbf{invariant norm-squared-like function} 
is a function $X \to \Rplus$ that is $G$-invariant near $F$, 
that vanishes exactly on $F$,
and whose Hessian is non-degenerate on the normal bundle $\nu_X(F)$.
A morphism from $(X,F)$ to $(X',F')$ is a smooth map
from $X$ to $X'$ whose composition with (some, hence every) 
invariant norm-squared-like function on $X'$
is an invariant norm-squared-like function on $X$,
and that is $G$-equivariant near $F$.

The functor takes an object $(X,F)$ to $X \oodot F$
and a morphism $\varphi \colon X \to X'$
to the smooth map $\psi \colon X \oodot F \to X' \oodot F'$
such that the following diagram commutes
$$\xymatrix{
X \oodot F \ar[r]^{\psi} \ar[d] & X' \oodot F' \ar[d] \\
X \ar[r]^{\varphi} & X' ,
}$$
where the vertical arrows are the 
equivariant-radial-squared-blowdown maps.
Such a map $\psi$ exists by Lemma~\ref{smooth lift to radial squared}.

When $G$ is the circle group and its action is faithful,
the transitivity of the action on the fibres of the sphere bundle $S_E(F)$
implies that $F$ has codimension two in $X$
and that the action is free on a punctured neighbourhood of $F$ in $X$.
The equivariant-radial-squared-blowup functor then takes values
in the category whose objects are manifolds-with-boundary,
equipped with free circle actions near their boundaries,
and whose morphisms are equivariant transverse maps,
which we defined in Section~\ref{sec:functoriality}.
We now show that this functor provides an inverse
to the cutting functor.

\smallskip\noindent\textbf{Radial-squared blowup and cutting:}

We return to the circle group, $G=S^1$.

Let $X$ be a manifold and $F$ a closed codimension two submanifold,
and let $U_X$ be an open neighborhood of $F$ in $X$,
equipped with a circle action that fixes $F$
and is free on $U_X \ssminus F$.
The radial-squared blowup construction yields
a manifold-with-boundary $X \oodot F$.
The circle action on $U_X$ lifts to a free circle action on 
the open neighbourhood $U_X \oodot F$ of the boundary.
The cutting construction then yields a manifold $(X \oodot F)_\cut$ 
and a quotient map $c \colon X \oodot F \to (X \oodot F)_\cut$.
The interior of the manifold-with-boundary $X \oodot F$ is $X \ssminus F$.
We claim that there exists a unique diffeomorphism 
$$ X \xrightarrow{\cong} (X \oodot F)_\cut $$
whose restriction to $X \ssminus F$ coincides with the quotient map $c$
(which takes each point of $X \ssminus F$ to the singleton
containing that point).

Conversely, let $M$ be a manifold-with-boundary,
equipped with a free circle action on an open neighbourhood $U_M$
of the boundary. 
The cutting construction yields a manifold $\Mcut$ 
and codimension two submanifold $\Mred$,
and a quotient map $c \colon M \to \Mcut$.
The circle action on $U_M$
descends to a circle action on an open neighbourhood of $\Mred$
that fixes $\Mred$ and is free outside $\Mred$.
The radial-squared blowup then yields
a manifold-with-boundary $\Mcut \oodot \Mred$
with a free circle action on the open neighbourhood of the boundary.
The interior of the manifold-with-boundary 
$\Mcut \oodot F$ is $(\intM)_\cut$.
We claim that there exists a unique diffeomorphism 
$$ M \xrightarrow{\cong} \Mcut \oodot \Mred $$
whose restriction to the interior $\intM$ 
coincides with the quotient map $c$
(which takes each point of $\intM$ to the singleton containing that point).

By functoriality, it is enough to check these claims locally. 
Locally, these claims 
follow from the local models for the cutting construction
and from the radial--squared blowup construction of a vector bundle.

Namely, if 
$$ X = D^{n-2} \times D^2 \quad \text{ and } F = D^{n-2} \times \{ 0 \} $$
where $D^{n-2}$ is an open subset of $\R^{n-2}$
and $D^2$ is the open disc of radius-squared $\eps$
about the origin in $\R^2$,
then the definition of the radial-squared blowup
gives a diffeomorphism 
$$ X \oodot F \xrightarrow{\cong} D^{n-2} \times S^1 \times [0,\eps) $$
that carries the radial-squared-blowdown map 
$X \oodot F \to X$ to 
$$ (t,u,s) \mapsto (t,\sqrt{s}u) ,$$
which is exactly the map that induces the diffeomorphism 
$$ \Big( D^{n-2} \times S^1 \times [0,\eps) \Big)_\cut
 \xrightarrow{\cong} D^{n-2} \times D^2 .$$

Conversely, if $M = D^{n-2} \times S^1 \times [0,\eps)$, 
then we have a diffeomorphism 
$$ \Mcut \xrightarrow{\cong} D^{n-2} \times D^2 $$
that takes the quotient map $c \colon M \to \Mcut$ to
$$ (t,u,s) \mapsto (t,\sqrt{s}u) ,$$
which is exactly the map that induces the diffeomorphism
$$ \Mcut \oodot \Mred \xrightarrow{\cong} 
   D^{n-2} \times S^1 \times [0,\eps).$$

\appendix

\section{Actions and quotients}
\labell{sec:actions}

In this appendix we collect some well-known facts
about actions of compact groups and their quotients.

First, we recall some facts about proper maps.
A continuous map $g \colon A \to B$
between topological spaces is \textbf{proper} if the preimage
of every compact subset of $B$ is a compact subset of $A$.

\begin{Exercise*}
Every closed map with compact level sets is proper.
Every proper map has compact level sets.
If $B$ is Hausdorff and locally compact, 
then every proper map to $B$ is a closed map.
\end{Exercise*}

Recall that, for a topological space, 
being $T_1$ means that singletons are closed,
and being normal means that for any two disjoint closed sets $C_1$ and $C_2$
there exist open sets $U_1$ and $U_2$ that are disjoint 
and such that $U_1$ contains $C_1$ and $U_2$ contains $C_2$.
Every $T_1$ normal space is Hausdorff.

Manifolds are assumed to be Hausdorff and second countable.
Because they are locally compact, 
these properties imply that they are paracompact
(every open cover has a locally finite open refinement)
and normal;
see, e.g., \cite[Theorems~4.77 and~4.81]{JohnLee:topological}.

\begin{Lemma} \labell{quotient map}
Let a compact topological group $G$ 
act continuously on a $T_1$ normal topological space $N$.
Consider the action map $\rho \colon G \times N \to N$,
which we write as $(a,x) \mapsto a \cdot x$,
and the quotient map $\pi \colon N \to N/G$.
For any subset $A$ of $N$, write
$G \cdot A = \{ a \cdot x \ | \ a \in G \text{ and } x \in N \}$.
Write the orbit of a point $x \in N$ also as $G \cdot x$
(and not only as $G \times \{ x \}$).

\begin{enumerate}


\item \labell{orbits-compact-closed}
Orbits in $N$ are compact and closed.


\item \labell{action is closed}
The action map $G \times N \to N$ is a closed map.
\item \labell{sweep-open}
For every open subset $U$ of $N$, the subset $G \cdot U$ of $N$ is open.
\item \labell{intersection}
For every open subset $U$ of $N$, 
the intersection $\bigcap\limits_{a \in G} a \cdot U$ is open.
\item \labell{quotient:T1-normal}
The quotient topological space $N/G$ is $T_1$ and normal.
\end{enumerate}
\end{Lemma}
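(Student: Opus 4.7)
The plan is to establish the five items in the order given, with each one feeding into the next. For \eqref{orbits-compact-closed}, I would note that $G \cdot x$ is the image of $G \times \{x\}$ under the continuous action map, hence compact; since $N$ is $T_1$ and normal, it is Hausdorff, so compact subsets are closed.

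For \eqref{action is closed}, the standard trick is to factor the action map as $\rho = \pi_N \circ \tau$, where $\tau \colon (a,x) \mapsto (a, a \cdot x)$ is a homeomorphism of $G \times N$ (its inverse sends $(a,y)$ to $(a, a^{-1} \cdot y)$) and $\pi_N \colon G \times N \to N$ is the projection. By the tube lemma, $\pi_N$ is closed because $G$ is compact, so $\rho$ is closed. Item \eqref{sweep-open} then follows by writing $G \cdot U = \bigcup_{a \in G} a \cdot U$ and noting that each translate $a \cdot U$ is open because $x \mapsto a \cdot x$ is a homeomorphism. Item \eqref{intersection} follows from \eqref{action is closed} by complementation: the complement of $\bigcap_{a \in G} a \cdot U$ equals $G \cdot (N \ssminus U)$, which is the image of the closed set $G \times (N \ssminus U)$ under the closed map~$\rho$, hence closed.

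For \eqref{quotient:T1-normal}, the $T_1$ property transfers because the preimage of a singleton $\{\pi(x)\} \subset N/G$ is the orbit $G \cdot x$, which is closed by \eqref{orbits-compact-closed}. For normality, let $\ol{C}_1, \ol{C}_2$ be disjoint closed subsets of $N/G$; their preimages $C_1, C_2$ are disjoint $G$-invariant closed subsets of $N$, and by normality of $N$ we may choose disjoint open $U_i \supset C_i$ for $i = 1,2$. The main obstacle here is that the $U_i$ need not be $G$-invariant, so $\pi(U_i)$ need not be open in $N/G$. The plan is to resolve this by passing to the averaged neighbourhoods $U_i' := \bigcap_{a \in G} a \cdot U_i$: item \eqref{intersection} ensures each $U_i'$ is open, each is manifestly $G$-invariant, each still contains $C_i$ (because $C_i$ is $G$-invariant), and $U_1' \cap U_2' \subseteq U_1 \cap U_2 = \emptyset$. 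Since $U_i'$ is $G$-invariant, $\pi^{-1}(\pi(U_i')) = U_i'$, so $\pi(U_i')$ is open in the quotient topology, and the $\pi(U_i')$ give the required separation.

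The key step that makes everything work is \eqref{action is closed}, because it is what upgrades the trivial observation \eqref{sweep-open} to the less obvious \eqref{intersection}; once \eqref{intersection} is available, the normality argument in \eqref{quotient:T1-normal} reduces to the standard averaging of open neighbourhoods over a compact group.
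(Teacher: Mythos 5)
Your proof is correct, and items \eqref{orbits-compact-closed}, \eqref{sweep-open}, \eqref{intersection}, \eqref{quotient:T1-normal} follow the paper's argument essentially verbatim. The one genuine difference is item \eqref{action is closed}: you factor the action map as $\pi_N \circ \tau$ and invoke the closed-projection form of the tube lemma (compactness of $G$ makes $\pi_N \colon G \times N \to N$ closed). The paper instead shows the action map is \emph{proper} and then uses the fact that a proper map into a Hausdorff \emph{locally compact} space is closed. Your route is cleaner: local compactness of $N$ is not among the lemma's hypotheses ($T_1$ plus normal does not imply it), so the paper's argument quietly uses a property that is available in all of the paper's applications but is not part of the stated assumptions; your tube-lemma argument works for arbitrary $N$.

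One small inaccuracy in your exposition of \eqref{quotient:T1-normal}: you say $\pi(U_i)$ ``need not be open'' when $U_i$ is not $G$-invariant. In fact $\pi(U_i)$ \emph{is} always open, since $\pi^{-1}(\pi(U_i)) = G \cdot U_i$ is open by \eqref{sweep-open}. The real obstruction, which your averaged sets $U_i'$ do remove, is that $\pi(U_1)$ and $\pi(U_2)$ need not be \emph{disjoint} even when $U_1 \cap U_2 = \emptyset$, since an orbit can meet $U_1$ and $U_2$ in different points. The construction and conclusion you give are nonetheless correct.
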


\begin{proof} 

For any compact subset $K$ of $N$,
the subset $G \cdot K$ of $N$,
being the image of the compact set $G \times K$
under the continuous action map $(a,x) \mapsto a \cdot x$,
is also compact.

Applying this to singletons in $N$,
we obtain that $G$-orbits in $N$ are compact.
Because $N$ is Hausdorff, this implies that $G$-orbits in $N$ are closed.
This proves~\eqref{orbits-compact-closed}.

We now show that the action map $G \times N \to N$ is a proper map.
Let $K$ be a compact subset of $N$.
Because $N$ is Hausdorff, the compact set $K$ is closed in $N$,
so its preimage under the continuous action map $(a,x) \mapsto a \cdot x$
is closed.
Being a closed subset of the compact set $G \times (G \cdot K)$,
this preimage is compact.
Because $K$ was an arbitrary compact subset of $N$,
the action map is proper.

Because the action map $G \times N \to N$ is proper 
and its target space $N$ is Hausdorff and locally compact,
the action map is a closed map.
This proves~\eqref{action is closed}.

Let $U$ be an open subset of $N$.
For each $a \in G$,
because the maps $x \mapsto a \cdot x$
and $x \mapsto a^{-1} \cdot x$ are continuous
and are inverses of each other, 
the set $a \cdot U$ is open.
The set $G \cdot U$,
being the union of the open sets $a \cdot U$ over all $a \in G$,
is then open.
This proves~\eqref{sweep-open}.

Let $U$ be an open subset of $N$.
Then the complement of $U$ in $N$ is closed in $N$.
By~\eqref{action is closed},
the image of this complement under the action map $G \times N \to N$
is closed in $N$.
So the complement in $N$ of this image is open.
But this complement is exactly $\bigcap\limits_{a \in G} a \cdot U$.
This proves~\eqref{intersection}.

Because (by~\eqref{orbits-compact-closed}) orbits in $N$ are closed,
the quotient space $N/G$ is $T_1$.
It remains to show that the quotient space $N/G$ is normal.
Let $C_1$ and $C_2$ be disjoint closed subsets of $N/G$.
Then $\pi^{-1}(C_1)$ and $\pi^{-1}(C_2)$ are 
disjoint closed subsets of $N$.
Because $N$ is normal, there exist open subsets $\wh{U}_1$ and $\wh{U}_2$
of $N$ that are disjoint and such that
$\wh{U}_1$ contains $\pi^{-1}(C_1)$ and $\wh{U}_2$ contains $\pi^{-1}(C_2)$.
Let
$U_1 := \big( \bigcap\limits_{a \in G} a \cdot \wh{U}_1 \big)/G$
and 
$U_2 := \big( \bigcap\limits_{a \in G} a \cdot \wh{U}_2 \big)/G$.
Then $U_1$ and $U_1$ are open subsets of $N/G$
(by~\eqref{intersection}),
they are disjoint, 
$U_1$ contains $C_1$, and $U_2$ contains $C_2$.
Because $C_1$ and $C_2$ were arbitrary disjoint closed subsets of $N/G$,
we conclude that $N/G$ is normal.
This proves~\eqref{quotient:T1-normal}.
\end{proof}

\begin{Lemma} \labell{quotient:T1-normal-secondcountable}
Let a compact topological group $G$ 
act continuously on a topological space $N$.
Suppose that $N$ is $T_1$ normal, and second countable.
Then $N/G$ is also $T_1$ normal, and second countable.
\end{Lemma}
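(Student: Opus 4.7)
The $T_1$ and normality statements are immediate from Lemma~\ref{quotient map}\eqref{quotient:T1-normal}, since those properties were established there under exactly the hypotheses we now have. So the only new content is second countability of $N/G$, and my plan is to deduce it from second countability of $N$ by showing that the quotient map $\pi \colon N \to N/G$ is an open map and pushing a countable basis forward.

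First I would verify that $\pi$ is open. By definition of the quotient topology, for any open $U \subset N$, the image $\pi(U)$ is open in $N/G$ iff $\pi^{-1}(\pi(U))$ is open in $N$. But $\pi^{-1}(\pi(U)) = G \cdot U$, and this set is open by Lemma~\ref{quotient map}\eqref{sweep-open}. Hence $\pi$ is open.

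Next, let $\mathfrak{U}$ be a countable basis for the topology of $N$. I claim that $\mathfrak{U}' := \{\pi(U) : U \in \mathfrak{U}\}$ is a countable basis for the topology of $N/G$. Countability is clear, and each $\pi(U)$ is open by the previous step. To see the basis property, let $V$ be any open subset of $N/G$ and $[x] \in V$. Then $\pi^{-1}(V)$ is open in $N$ and contains $x$, so there exists $U \in \mathfrak{U}$ with $x \in U \subset \pi^{-1}(V)$. Applying $\pi$ and using surjectivity of $\pi$, we get $[x] \in \pi(U) \subset \pi(\pi^{-1}(V)) = V$, which gives the basis property.

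This is a completely routine argument once openness of $\pi$ is in hand; there is no real obstacle here. The only mild subtlety is remembering that $\pi$'s openness comes from saturating open sets under the group action (Lemma~\ref{quotient map}\eqref{sweep-open}) rather than from any properness consideration, and that surjectivity of $\pi$ gives $\pi(\pi^{-1}(V)) = V$ so that the pushed-forward basis elements actually cut out neighbourhoods inside prescribed open sets.
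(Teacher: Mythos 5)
Your proof is correct, and it actually does better than the paper's own proof of this lemma. Two differences are worth noting. First, for $T_1$ and normality you simply invoke Lemma~\ref{quotient map}\eqref{quotient:T1-normal}, which is exactly right and avoids duplication; the paper's own proof of Lemma~\ref{quotient:T1-normal-secondcountable} instead re-derives both properties from scratch (repeating, almost verbatim, the argument already given for Lemma~\ref{quotient map}\eqref{quotient:T1-normal}). Second, and more importantly, the paper's proof of this lemma never addresses second countability at all --- it ends after establishing normality --- so the statement as proved there has a gap. Your argument (openness of $\pi$ via $\pi^{-1}(\pi(U)) = G\cdot U$ and Lemma~\ref{quotient map}\eqref{sweep-open}, then pushing forward a countable basis, using $\pi(\pi^{-1}(V)) = V$ by surjectivity) is the standard and correct way to fill it; the paper does present essentially this same argument, but only later, buried in the proof of Lemma~\ref{subordinate} rather than where it logically belongs. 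So your proposal both streamlines the redundant parts and supplies the missing part.
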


\begin{proof}[Proof of Lemma~\ref{quotient:T1-normal-secondcountable}]
Let $y$ be a point of $N/G$. 
By~\eqref{orbits-compact-closed}, 
$y$ is closed as a subset of $N$.
Because the subset $y$ of $N$ 
is the preimage under the quotient map $N \to N/G$
of the singleton $\{ y \}$, 
this singleton is closed in $N/G$.
Because the point $y$ of $N/G$ was arbitrary, 
the topological space $N/G$ is $T_1$.

Let $\hat{C}_1$ and $\hat{C}_2$ be disjoint closed subsets of $N/G$.
Let $C_1$ and $C_2$ be their preimages in $N$;
then $C_1$ and $C_2$ are disjoint closed subsets of $N$.
Because $N$ is normal,
there exist disjoint open subsets $U_1$ and $U_2$ of $N$, 
that, respectively, contain $C_1$ and $C_2$.
By~\eqref{intersection}, the $G$-invariant sets 
$U'_1 := \bigcap\limits_{a \in G} a \cdot U_1$ 
and 
$U'_2 := \bigcap\limits_{a \in G} a \cdot U_2$ 
are open in $N$.
The quotients $U'_1/G$ and $U'_2/G$ are 
disjoint open subsets of $N/G$
that, respectively, contain $\hat{C}_1$ and $\hat{C}_2$.
Because the closed subsets $\hat{C}_1$ and $\hat{C}_2$ of $N/G$
were arbitrary, the topological space $N/G$ is normal.

\end{proof}

\begin{Lemma} \labell{subordinate}
Let a compact Lie group $G$ act freely on a manifold-with-boundary $N$.
Then the following holds.

\begin{enumerate}
\item \labell{koszul}
There exists a unique manifold-with-boundary structure on $N/G$
such that the quotient map $N \to N/G$ is a submersion.  
Moreover, this quotient map is a principal $G$-bundle.
Finally, the boundary of $N$ maps to the boundary of $N/G$,
and the interior of $N$ maps to the interior of $N/G$.
\item \labell{bump}
For every orbit $G \cdot x$ in $N$
and $G$-invariant open set $U$ that contains the orbit
there exists a $G$-invariant smooth function 
$\rho \colon N \to \R$
that is equal to $1$ on a neighbourhood of the orbit
and whose support is contained in the open set $U$.
\item \labell{partition of unity}
For every cover of $N$ by $G$-invariant open sets
there exists a partition of unity 
by $G$-invariant smooth functions
that is subordinate to the cover. 
\end{enumerate}
\end{Lemma}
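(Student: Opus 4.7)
The plan for Lemma~\ref{subordinate} is to treat each part separately, with parts (2) and (3) standard consequences of part (1) together with averaging over Haar measure.

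For part (1), I would apply Koszul's slice theorem for free actions of a compact Lie group, combined with a $G$-equivariant collar neighborhood of the boundary. Because the action is smooth, $\del N$ is $G$-invariant. Averaging an arbitrary inward-pointing vector field defined near $\del N$ with respect to normalized Haar measure on $G$ produces a $G$-invariant inward-pointing vector field, and its flow gives an equivariant diffeomorphism of some $G$-invariant neighborhood of $\del N$ with $\del N \times [0,\eps)$. This reduces the boundary case to the slice theorem applied to the free $G$-action on $\del N$. In the interior, one applies the slice theorem directly: at each orbit $G \cdot x$, a slice $S$ transverse to the orbit yields a $G$-equivariant diffeomorphism of an invariant neighborhood with $G \times S$, and $S$ becomes a chart on $N/G$. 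Uniqueness of the smooth structure, the principal bundle structure, and the boundary/interior correspondences all then follow from these local descriptions.

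For part (2), the plan is invariant averaging. Because $N$ is normal and $G \cdot x$ is a compact (by Lemma~\ref{quotient map}\eqref{orbits-compact-closed}) subset of the open set $U$, there is an open subset $W$ of $N$ with $G \cdot x \subset W \subset \ol W \subset U$. Then $W' := \bigcap_{a \in G} a \cdot W$ is a $G$-invariant open neighborhood of $G \cdot x$ whose closure is contained in $U$ (using Lemma~\ref{quotient map}\eqref{intersection}). Pick a smooth function $\tilde\rho \colon N \to [0,1]$ supported in $W'$ that equals $1$ on some open neighborhood $V$ of $G \cdot x$ with $V \subset W'$, and set $\rho(y) := \int_G \tilde\rho(a \cdot y)\,da$ with respect to normalized Haar measure. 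Then $\rho$ is smooth and $G$-invariant. On the $G$-invariant open neighborhood $V' := \bigcap_{a \in G} a \cdot V$ of $G \cdot x$, each $\tilde\rho(a \cdot y) = 1$, so $\rho \equiv 1$; outside the $G$-saturation $G \cdot \supp \tilde\rho \subset U$, each $\tilde\rho(a \cdot y) = 0$, so $\rho \equiv 0$. Hence $\supp \rho \subset U$.

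For part (3), the plan is to lift a partition of unity from the quotient. By part (1), $N/G$ is a manifold-with-boundary, hence paracompact. Given a cover of $N$ by $G$-invariant open sets, the images under the submersion $\pi \colon N \to N/G$ form an open cover of $N/G$. Choose a smooth partition of unity on $N/G$ subordinate to this cover (which exists by paracompactness) and pull back via $\pi$ to obtain smooth $G$-invariant functions on $N$ forming a partition of unity subordinate to the original cover. The main obstacle in this program is the boundary case of part (1): the classical slice theorem is stated for manifolds without boundary, so one must produce the equivariant collar and verify that the resulting charts on $N/G$ near the boundary fit together with the interior slice charts to give a consistent manifold-with-boundary structure on $N/G$. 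Once that is in hand, the principal bundle structure and uniqueness are immediate from the local trivializations, and parts (2) and (3) are routine.
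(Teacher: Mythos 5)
Your proposal is correct, and the overall plan matches the paper's: part (1) by Koszul's slice theorem plus an equivariant collar argument (the paper cites two local model facts proved by adapting Proposition~\ref{prop:local}, which is exactly the flow-along-an-invariant-inward-pointing-vector-field construction you describe), and part (3) by pulling back a partition of unity from $N/G$. The one place where your route genuinely differs is part (2): the paper, having established that $N/G$ is a manifold-with-boundary, obtains the bump function by pulling back a smooth bump function from $N/G$ (parallel to how it handles (3)), whereas you construct it directly on $N$ by Haar averaging a non-invariant bump function supported in a saturated shrinking $W' = \bigcap_{a\in G} a\cdot W$ of $U$. Both are standard and correct; your averaging argument is more self-contained in that it does not use part (1) at all, only the point-set facts from Lemma~\ref{quotient map}, while the paper's is more economical in reusing the quotient manifold structure. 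One small point worth making explicit in your part (3): to invoke paracompactness of $N/G$ you need Hausdorffness and second countability, not just the manifold-with-boundary local structure; the paper supplies this by appealing to Lemma~\ref{quotient map}\eqref{quotient:T1-normal} for Hausdorffness and by pushing forward a countable basis $\frakU$ of $N$ to $\{(G\cdot U)/G\}_{U\in\frakU}$ for second countability. You should include that observation, since ``manifold-with-boundary'' in this paper's conventions already bundles in Hausdorffness and second countability, and these need to be verified for the quotient.
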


\begin{proof} 
The following two local facts are special cases of Koszul's slice theorem
and its analogue for manifolds-with-boundary.
They can be proved by a straightforward adaptation
of the proof of Proposition~\ref{prop:local}.
\begin{itemize}
\item
Each orbit in the interior of $N$
has a neighbourhood that is equivariantly diffeomorphic to $D^k \times G$,
with $G$ acting by left translation on the middle factor,
and where $D^k$ is a disc in $\R^k$.
\item
Similarly,
each orbit in the boundary of $N$
has a neighbourhood that is equivariantly diffeomorphic 
to $D^{k-1} \times G \times [0,\eps)$,
with $G$ acting by left translation on the middle factor,
and where $D^{k-1}$ is a disc in $\R^{k-1}$ and $\eps > 0$.
\end{itemize}

By Lemma~\ref{quotient map}\eqref{quotient:T1-normal},
the quotient space $N/G$ is $T_1$ and normal.

Let $\frakU$ be a countable basis for the topology of $N$;
then $\frakU' := \{ (G \cdot U/G \}_{U \in \frakU}$
is a countable basis for the topology of $N/G$.
So $N/G$ is second countable.

The first part of the lemma follows from the above two local facts
and from $N/G$ being Hausdorff and second countable.
The second and third parts of the lemma
then follow from the existence of smooth bump functions
and smooth partitions of unity on the manifold-with-boundary $N/G$.
\end{proof}

\section{Simultaneous cutting}
\labell{sec:simultaneous}

We expect the results of this paper to generalize to 
the setups of simultaneous cutting.
We include here the relevant definitions,
deferring the details to another occasion.

Let $M$ be an $n$ dimensional manifold-with-corners
(as introduced by Jean Cerf and by Adrien Douady \cite{cerf,douady};
see \cite[Sect.~16]{JohnLee:smooth}).

The \emph{depth} of a point $x \in M$ is the (unique) integer $k$
such that there exists a chart $U \to \Omega$
from a neighbourhood $U$ of $x$ in $M$
to an open subset $\Omega$ of the sector $\R_{\geq 0}^{k} \times \R^{n-k}$
that takes $x$ to a point in $\{ 0 \}^k \times \R^{n-k}$.
The \emph{k-boundary} of $M$, denoted $M^{(k)}$, 
is the set of points $x \in M$ of depth~$k$.  
The \emph{interior} of $M$ is its $0$-boundary.
The \emph{strata} of $M$
are the connected components of $M^{(k)}$ for $k = 0, \ldots, n$.

Every open subset $U$ of $M$ is also a manifold with corners.
The manifold with corners $M$ is a \emph{manifold with faces}
if each point $x$ of depth $k$ 
is in the closure of $k$ distinct components of $M^{(1)}$.

Now assume that $M$ is an $n$ dimensional manifold with faces. 
The \emph{faces} of $M$ are the closures of the strata.
The \emph{facets} of $M$ are the $n-1$ dimensional strata.
Let $\calS$ denote the set of facets of $M$.
Every codimension $k$ face $Y$ of $M$, obtained as the closure
of a stratum~$\intY$, is itself a manifold with corners 
(in fact, with faces),
whose interior is the stratum $\intY$.
Also, the face $Y$ is the intersection of a unique 
$k$ element subset $S_Y$ of $\calS$.

For each facet $F \in S$, let $U_F$ be a neighbourhood of $F$ in $M$.
Suppose that, for each subset $S$ of $\calS$, 
the neighbourhoods $U_F$, for $F \in S$, have a nonempty intersection
only if the facets $F$, for $F \in S$, have a nonempty intersection.
This intersection is then the union of those faces $Y$ with $S_Y = S$.
Suppose that we are given a free circle action on each neighbourhood $U_F$
and that these actions commute on the intersections 
of these neighbourhoods.

To each face $Y$ of $M$ we then associate neighbourhood
$U_Y$ that is contained in the intersection $\cap \{ U_F \ | \ F \in S_Y \}$
and is invariant with respect to the torus
$T_Y := (S^1)^{S_Y}$, which acts on $U_Y$.
(If $Y$ has codimension $k$, then $S_Y$ is a set of $k$ elements,
so $T_Y$ has dimension $k$.)

Consider the equivalence relation $\sim$ on $M$
such that, for $m \neq m'$, we have that $m \sim m'$ if and only if
there exists a codimension $k$ face $Y$ of~$M$ 
and a torus element $a \in T_Y$
such that $m,m' \in Y$ and $m = a \cdot m'$.

Let $\Mcut := M/{\sim}$; equip $\Mcut$ with the quotient topology;
let $c \colon M \to \Mcut$ denote the quotient map.
For each face $Y$ of $M$, denote $X_Y = \intY_\cut$.
There exists a unique manifold structure on $X_Y$ 
such that a real valued function $h \colon X_Y \to \R$ is smooth
if and only if the function $h \circ c|_{\intY} \colon \intY \to \R$
is smooth.  Moreover, with this manifold structure,
the map $c|_{\intY} \colon \intY \to X_Y$ is a principal $T_Y$-bundle.
This follows from the slice theorem for the $T_Y$ action on $\intY$.

So we have a decomposition of the topological space $\Mcut$
into disjoint subsets $X_Y$, 
and we have a smooth manifold structure on each of the subsets $X_Y$.

For each facet $F \in \calS$, fix a smooth function 
$f^{(F)} \colon U_F \to \R_{\geq 0}$
whose zero level set is $F$ and such that $df^{(F)}|_F$ never vanishes.  
Moreover, assume that each $f^{(F)}|_{U_F}$ is invariant,
with respect to the circle action on $U_F$,
on some invariant neighbourhood of $F$ in $U_F$.
Furthermore, assume that for each face $Y$ 
the functions $f^{(F)}$, for $F \in S_Y$, 
are $T_Y$-invariant on some neighbourhood of $Y$.

Generalizing Construction~\ref{calF M},
we define $\calF_M$ to be 
the set of those real valued functions $h \colon \Mcut \to \R$
whose composition $\wh{h} := h \circ c$
with the quotient map $c \colon M \to \Mcut$
satisfies the following two conditions.
\begin{itemize}
\item[($\calF 1$)]
$\left.\wh{h}\right|_{\intM} \colon \intM \to \R$ is smooth.
\item[($\calF 2$)]
For each face $Y$ of $M$,
there exists a $T_Y$-invariant neighbourhood $V$ of $Y$ in $U_Y$
and a smooth function $H \colon V \times \C^{S_Y} \to \R$ such that
\begin{itemize}
\item[(a)]
$H(a \cdot x, z) = H(x,az)$ 
for all $a \in T_Y$ and $(x,z) \in V \times \C^{S_Y}$; and
\item[(b)]
$\wh{h}(x) = H(x,z)$,
where the coordinates of $z$
are given by $z_F = \sqrt{f^{(F)}}(x)$ for all ${F \in S_Y}$.
\end{itemize}
\end{itemize}

As in Lemma~\ref{FM indep of f} and Theorem~\ref{mfld str},
we expect the set of functions $\calF_M$ 
to be independent of the choice of invariant boundary defining functions 
$f^{(F)}$,
and we expect there to exist a unique manifold structure on $\Mcut$
such that $\calF_M$ is the set of real valued smooth functions on $\Mcut$.

We expect the results of this paper
(functoriality of $\psi \mapsto \psi_\cut$,
cutting of submanifolds,
cutting of differential forms, 
symplectic and contact cutting)
to extend to this setup.

We expect that a simultaneous (not iterated)
equivariant-radial-squared-blowup construction
would provide an inverse to the simultaneous cutting construction.



{\ifdebug \newpage \else \end{document} \fi}

\section{Leftovers}

\ynote{Eugene: In his cutting papers he didn't assume nondegeneracy. \\
(Classification of contact toric mflds: he did the case of proper cones;
the other cases are "obvious"..)}

\textbf{To do:}
\begin{itemize}
\item
Give Bredon credit for describing the issue 
in Janich's construction 
(although Bredon did not work with radial blowups).
\end{itemize}

\textbf{Journals:}
\begin{itemize}
\item Inventiones: send email to editor.
Kai Cieliebak? Danny Calegari? 
\item Duke: submit online.
Some editors: Richard Hain, Tobias Colding $\ldots$
\item Geometry and Topology: 
submit online and suggest editors.
Frances Kirwan? Etienne Ghys?

\item[$\circ$:] Memoirs want at least 80 pages.
(This paper is 65; with ``fullpage'' it's 52.)
\end{itemize}

\textbf{Not for action:}

Some nice latex trick:
\begin{center}
\begin{minipage}{.25\textwidth}
\begin{multline*}
\calO_X := \, \varphi(\calO) \\ \subset \, X \, 
\end{multline*}
\end{minipage}
\quad , \quad
\begin{minipage}{.65\textwidth}
\begin{multline*}
  \calO_{X \odot F} := \, p^{-1}(\calO_X) \\
  \ = (\calO_X \ssminus F) \sqcup S_X(F)  \ 
  \, \subset \, X \odot F \,  
\end{multline*}
\end{minipage} 
\end{center}

\textbf{Not for action:}
\begin{itemize}

\item
This iterated construction is different
from the simultaneous construction that we mentioned above;
it applies more generally, but without additional data
it yields less information about the original $G$-manifold.
 
\item
We expect that a simultaneous equivariant-radial-squared blowup construction
would provide an inverse to the simultaneous cutting construction.
and would lead to a classification of these $G$-manifolds,
generalizing generalizing the earlier work 
of J\"anich, Hsiang-Hsiang, and Bredon.
The special case of locally standard torus action 
is the topic of our a paper-in-progress 
with Shintaro kuroki~\cite{karshon-kuroki}.

\item
Guillemin-Lerman \cite[Proposition~1.1]{guillemin-lerman}
define on $\Mcut$ the structure of an orbifold 
in which the structure group of all the points in $\Mred$ is $\Z_2$.

\item
Removed from the intro (only works in the locally
standard case):

When $G$ is the torus $T:=(S^1)^k$,
Haefliger and Salem~\cite{haefliger-salem}
recover a $T$-manifold $M$ from data on $M/T$
together with a cohomology class in $H^2(M/T,\Z^k)$.
Haefliger and Salem do not work with radial blowups,
but their cohomology class
encodes a principal $T$ bundle over $M/T$
that can be obtained by 
simultaneous (not iterated!) radial blowups of $M/T$.

\item
2020 Math Subject Classification: \\
53D20 Momentum maps; symplectic reduction \\
57R55 Differentiable structures in differential topology

\item
Relation w/ Eckhard's claim (from his s.geom lecture notes)
that we can obtain Delzant classification through cutting.

\item
Ana CdS's paper with Victor and Chris; see page 18 culminating with Theorem 7
\url{https://people.math.ethz.ch/~acannas/Papers/unfolding.pdf} \\
Ana CdS's paper with Victor and Ana Rita; see Proposition 2.10
\url{https://people.math.ethz.ch/~acannas/Papers/origami.pdf}

\item
Albin+Melrose incorrectly claim that the earlier people 
only did ``real blowup'' or produced the double covering of the radial blowup.
Melrose has this construction in his book-draft.
Bredon has relevant stuff in his book, in Section 5 of Chapter VI.
Bredon:
Consider the $\Z_2$ action on $\R$.
Take the orbit map $x \mapsto |x|$.
Then the diffeomorphism $x \mapsto x + x^2$ of the quotient
does not lift to a diffeomorphism of $\R$.
We can view this same example as follows.
The radial blowup of $\R$ is $\Rplus \sqcup \Rplus$.
The formula $x \mapsto x+x^2$ defines an equivariant diffeomorphism
of this radial blowup that does not correspond to an equivariant
diffeomorphism of $\R$.

\item
Are the smooth structures on $\Mcut$ 
that come from different extensions of the circle action on $\del M$
to a circle action near $\del M$ always isomorphic?
\item
Cutting of Lagrangian submanifolds

\item
The maps $\psi_\cut$ that we get downstairs
are probably transverse (near the cut locus) 
in the sense of Michael Davis.

\item
I think that it was MacPherson that described
a symplectic toric manifold, as a topological space with a torus action,
as obtained by taking a polytope times a torus
and collapsing along the facets;
this was probably in the early 1990s and maybe earlier.

\item
A nice diagram, leftover from a too-complicated old proof 
that $c|_{U_M} \colon U_M \to (U_M)_\cut$ is proper
and that $\bfc \colon M \to \Mcut$ is proper:
$$ \xymatrix{
 U_M \ar[r]^{c} \ar@/_1pc/_{\pi}[rr] & (U_M)_\cut \ar[r]^{q} & U_M/S^1 ,
}$$

\item
Wording that might be useful in \S\ref{sec:intro}: \\
The smooth manifold structure on $\Mcut$ is unique
because the required properties 
determine which real valued functions are smooth 
which, in turn, determines the smooth manifold structure.

\item
For the notion of a manifolds with corners, 
J\"anich \cite{janich} refers to A.\ Douady 
(``Varietes a bords anguleux et voisinages tubulaires'', 1961/62).
Janich defines ``manifold with faces''.

\end{itemize}

\section{Leftovers on simultaneous cutting}

We now have a category.   Its objects are manifolds with faces,
equipped with free circle actions on neighbourhoods of the facets.  
Its morphisms are \ynote{$\ldots$}.
As in Section \ref{sec:functoriality}, the cutting construction
gives a functor from this category to the category whose objects
are smooth manifolds and whose morphisms are \ynote{$\ldots$}.
In particular, if $G$ is a Lie group,
a smooth $G$ action on $M$ that commutes with the circle actions
descends to a smooth $G$ action on~$\Mcut$.

As in Section \ref{sec:diff forms}, 
let $\beta$ be an invariant differential form on $M$ 
whose pullback to each stratum $\intY$ is basic with respect to $T_Y$,
thus this pullback comes from a differential form $\beta_Y$ on $X_Y$.
Then $\beta$ descends to a differential form $\beta_\cut$ on $X$,
whose pullback to each stratum $X_Y$ is exactly $\beta_Y$.
\ $\beta$ is closed on $M$ if and only if $\beta_\cut$
is closed on $\Mcut$.
For two such differential forms $\beta$ and $\beta'$,
we have $(d\beta)_\cut = d(\beta_\cut)$
and $(\beta \wedge \beta')_\cut = \beta_\cut \wedge \beta'_\cut$.
For each point $x$ of $M$,
the form $\beta$ is nonzero at $x$ if and only if the form $c(\beta)$
is nonzero at $c(x)$.

As in Section \ref{sec:symplectic cutting}, 
let $\omega$ be an invariant closed two-form $\omega$ on $M$.
Suppose that each facet $F$ is a level set of a momentum map
for the corresponding circle action.
Then the pullback of $\omega$ to each $\intY$ is basic with respect
to $T_Y$, and we get a closed two-form $\omega_\cut$ on $\Mcut$.
Moreover, $\omega_\cut$ is symplectic on $\Mcut$
if and only if $\omega$ is symplectic on $M$.
Also, for a Lie group $G$, a $G$ action on $M$ that commutes
with the circle actions near the facets of $M$ 
and preserves their momentum maps
descends to a smooth $G$ action on $\Mcut$,
and a momentum map for this $G$ action on $M$
descends to a momentum map for the $G$ action on $\Mcut$.

As in Section \ref{sec:contact cutting}, 
let $\xi$ be a codimension one distribution on $M$
that contains the tangents to the $S^1$-orbits along each face $F$
and that is $(S^1)_F$-invariant near each face $F$.
Then $\xi$ descends to a codimension one
distribution $\xi_\cut$ on $\Mcut$.
Moreover, $\xi$ is contact if and only if $\xi_\cut$ is contact.
Finally, as before, a smooth action of a Lie group $G$ on $M$
that commutes with the circle actions and preserves $\xi$
descends to a smooth action of $G$ on $\Mcut$ that preserves $\xi_\cut$.

\section{The smooth cutting construction yields a manifold -- earlier version}

Let $X$ be a topological space.
A collection of subsets of $X$ is an \textbf{open cover} of $X$
if the subsets are open and their union is $X$.
A collection of subsets of $X$ is \textbf{locally finite}
if each point in $X$ has a neighbourhood 
that meets only finitely many subsets in the collection.
The \textbf{carrier} of a function $f \colon X \to \R$
is the set of points where $f$ does not vanish;
the \textbf{support} $\supp f$ of $f$ is the closure of its carrier.
A \textbf{partition of unity} on $X$ is a collection of functions 
$\rho_i \colon X \to \Rplus$
whose set supports $\supp \rho_i$ is locally finite 
and such that the sum $\sum_i \rho_i$ (which is hence well defined)
is everywhere equal to one.
The partition of unity is \textbf{subordinate} to a cover $\frakU$ of $X$
if for each $i$ there exists $U \in \frakU$
such that $\supp \rho_i \subset U$.

Let $X$ be a topological space,
and let $\calF$ be a $C^\infty$ ring of real-valued functions on $X$
that are continuous.
The space $X$ is $\mathbf{\calF}$\textbf{--regular}
if for each closed subset $C$ and point $x \in X \ssminus C$
there exists a function $f \in \calF$
such that $f(x) \neq 0$ and such that $f$ vanishes on a neighbourhood of $C$. 
The space $X$ is $\mathbf{\calF}$\textbf{--paracompact}
if for each open covering $\frakU$ of $X$
there exists a partition of unity $\rho_i$ subordinate to $\frakU$
whose elements $\rho_i$ are all in $\calF$.
The \textbf{initial topology} determined by $\calF$
is the smallest topology on $X$
for which the functions in $\calF$ are continuous.

\begin{Proposition} \labell{prop:F paracompact}
Let $X$ be a topological space 
that is Hausdorff, second countable, and locally compact.
Let $\calF$ be a $C^\infty$ ring of real valued functions on $X$ 
that are continuous.
Suppose that $X$ is $\calF$-regular \ynote{define}.
Then 
\begin{enumerate}
\item
The initial topology determined by $\calF$
coincides with the given topology on $X$. 
\item
$X$ is $\calF$-paracompact.
\end{enumerate}
\end{Proposition}

\begin{proof}
Because the functions in $\calF$ are continuous,
the initial topology is contained in the given topology.

Being $\calF$-regular implies that the given topology
is contained in the initial topology.

Because $X$ is Hausdorff, second countable, and locally compact,
$X$ admits an exhaustion by compact sets:
there exist compact subsets $K_i$ of $\Mcut$, for $i \in \N$,
such that $\Mcut = \cup_i K_i$
and such that $K_i \subset \text{int}(K_{i+1})$ for all $i$.
See, e.g., \ynote{John Lee intro to topological manifolds Proposition 4.75}.

The remaining argument is a minor adaptation of 
\ynote{John Lee intro to top.mflds Theorem 4.77}.
Let $\frakU$ be an open covering of $\Mcut$.
Let $(K_j)_{j=1}^\infty$ be an exhaustion of $X$ by compact sets.
For each $j$, let $A_j = K_{j+1} \ssminus \int K_j$
and $W_j = \int K_{j+2} \ssminus K_{j-1}$,  
where we set $K_j=\emptyset$ if $j<1$.
Then $A_j$ is a compact subset contained in the open subset $W_j$.
For each $x \in A_j$, choose $U_x \in \frakU$ containing $x$.
Because $X$ is $\calF$--regular,
there exists a non-negative function $f_x \colon X \to \R$ in $\calF$
such that $f_x(x) > 0$ and such that $\supp f_x \subset U_x \cap W_j$.
Because $A_j$ is compact, 
there exists a finite collection of points 
$x_1^{(j)} , \ldots , x_{n_j}^{(j)}$ in $A_j$
such that the set of carriers $\{ f_{x_i^{(j)}} \neq 0 \}$ covers $A_j$.
consider the collection of functions $f_{x_i^{(j)}}$, 
for $j \in \N$ and $i \in \{ 1,\ldots,n_j \}$;
relabel it as $f_i$ for $i \in \N$.
Then the set of supports $\supp f_i$ is locally finite 
and the set of carriers $\{ f_i > 0 \}$ is an open cover of $X$.
The sum $\sum f_i$ is then well defined and everywhere positive.
\ynote{NOPE!  Need locality.}
\end{proof}

\end{document}